\newtheorem{theorem}{Theorem}
\newtheorem{lemma}{Lemma}
\newtheorem{corollary}{Corollary}
\DeclareMathOperator{\mex}{mex}
\newcommand{\raf}[1]{(\ref{#1})}
\def\U{\mathcal{U}}
\def\cK{\mathcal{K}}
\def\cH{\mathcal{H}}
\def\cA{\mathcal{A}}
\def\cB{\mathcal{B}}
\def\cC{\mathcal{C}}
\def\cF{\mathcal{F}}
\def\cH{\mathcal{H}}
\def\G{\mathcal{G}}
\def\P{\mathcal{P}}
\def\N{\mathcal{N}}
\def\ZZP{\mathbb{Z}_{+}}
\def\ga{\alpha}
\def\go{\omega}
\def\hH{{\widetilde{\mathcal{H}}}}
\def\hx{{\widetilde{x}}}
\begin{document}

\title{Sprague-Grundy Function of Matroids and Related Hypergraphs}

\author{Endre Boros}
\address{MSIS and RUTCOR, RBS, Rutgers University,
100 Rockafeller Road, Piscataway, NJ 08854}
\email{endre.boros@rutgers.edu}

\author{Vladimir Gurvich}
\address{National Research University Higher School of Economics, Russian Federation}
\email{vgurvich@hse.ru}

\author{Nhan Bao Ho}
\address{Department of Mathematics and Statistics, La Trobe University, Melbourne, Australia 3086}
\email{nhan.ho@latrobe.edu.au, nhanbaoho@gmail.com}

\author{Kazuhisa Makino}
\address{Research Institute for Mathematical Sciences (RIMS)
Kyoto University, Kyoto 606-8502, Japan}
\email{e-mail:makino@kurims.kyoto-u.ac.jp}

\author{Peter Mursic}
\address{MSIS and RUTCOR, RBS, Rutgers University,
100 Rockafeller Road, Piscataway, NJ 08854}
\email{peter.mursic@rutgers.edu}



\subjclass[2000]{91A46}

\keywords{Matroid, Self-dual matroid, Impartial game, Sprague-Grundy function, NIM,
hypergraph NIM, JM hypergraph..}

\thanks{The authors thank Ilya Bogdanov and D\"om\"ot\"or P\'alv\"olgyi for their helpful suggestions cited in Section
\ref{s-size}, and the referees for careful reading and helpful remarks. The authors also thank Rutgers University and RUTCOR for the support
to meet and collaborate in September-October 2016, March 2017 and 2018.
The authors thank RIMS for the support to meet and collaborate in January 2017.
The second author was partially funded by
the Russian Academic Excellence Project '5-100'.
The fourth author was partially supported by JSPS KAKENHI Grant
Number JP24106002, JP25280004, JP26280001, and JST CREST Grant Number JPMJCR1402, Japan.}

\begin{abstract}
We consider a generalization of the classical
game of $NIM$ called hypergraph $NIM$.
Given a hypergraph  $\cH$
on the ground set  $V = \{1, \ldots, n\}$ of $n$ piles of stones,
two players alternate in choosing a hyperedge
$H \in \cH$ and strictly decreasing all piles $i\in H$.
The player who makes the last move is the winner.
In this paper we give an explicit formula that describes the Sprague-Grundy function of hypergraph $NIM$
for several classes of hypergraphs.
In particular we characterize all $2$-uniform hypergraphs
(that is  graphs) and all matroids for which the formula works. We show that all self-dual matroids are included in this class.
\end{abstract}

\maketitle

\section{Introduction}
\label{s1}


In the classical game of $NIM$ there are
$n$  piles of stones and two players move alternating.
A move consists of choosing a nonempty pile and taking some positive number of stones from it.
The player who cannot move is the loser.
Bouton \cite{Bou901} analyzed this game and described the winning strategy for it.

In this paper we consider the following generalization of $NIM$.
Given a hypergraph $\cH \subseteq 2^V$, where $V = \{1, \dots , n\}$, two players alternate in choosing
a hyperedge $H \in \cH$ and strictly decreasing all piles $i\in H$. Different piles can be decreased by different (positive) amounts.
We assume in this paper that $\cH\not=\emptyset$ and $\emptyset \not\in \cH$ for all considered hypergraphs $\cH$.
In other words, every move strictly decreases some of the piles.
Similarly to $NIM$, the player who cannot move is losing.
This game is called $NIM_\cH$ and some special cases of it were considered in \cite{BGHMM15, BGHMM16}.

$NIM_\cH$ is an impartial game.
In this paper we do not need to immerse in the theory of impartial games.
We will need to recall only a few basic facts to explain and motivate our research.
We refer the reader to \cite{Sie13} for more details; see also \cite{Alb07,BCG01-04}.

It is known that the set of positions of an impartial game can
uniquely be partitioned into sets of $\P$ and $\N$ positions
(in which, respectively, the previous and the next player can win).
Every move from a $\P$ position goes to an $\N$ position, while
from an $\N$ position always there exists a move to a $\P$ position.
This partition shows how to win the game, whenever possible.

The so-called Sprague-Grundy (SG) function $\G_\Gamma$ of an impartial game $\Gamma$ is
a refinement of the above partition.
Namely, $\G_\Gamma(x)=0$ if and only if $x$ is a $\P$ position.
The notion of the SG function for impartial games was introduced by Sprague and Grundy
\cite{Spr35, Spr37, Gru39} and it plays a fundamental role
in determining the $\N-\P$  partition of \emph{disjunctive sums} of impartial games.

Finding a formula for the SG function of an impartial game remains a challenge.
Closed form descriptions are known only for some special classes of impartial games.
We recall below some known results.
The purpose of our research is to extend these results and
to describe classes of hypergraphs for which
we can provide a closed formula for the SG function of $NIM_\cH$.
To follow our proofs, we need to recall the precise definition of the SG function,
which we will do in Section \ref{s2}.

The game $NIM_\cH$ is a common generalization of several families of impartial games considered in the literature.
For instance, if $\cH=\{\{1\},\dots ,\{n\}\}$ then $NIM_\cH$ is the classical $NIM$,
which was analyzed and solved by Bouton \cite{Bou901}.
The case of $\cH=\{S\subseteq V\mid 1\leq |S|\leq k\}$, where $k<n$, was considered by Moore \cite{Moo910}.
He characterized the $\P$ positions of these games, that is those with SG value $0$.
Jenkyns and Mayberry \cite{JM80} described also the
set of positions in which the SG value is $1$ and
provided an explicit formula for the SG function in the subcase of $k = n-1$.
This result was extended in \cite{BGHM15}.
In \cite{BGHMM15} the game $NIM_\cH$ was considered
in the case of $\cH=\{S\subseteq V \mid |S|=k\}$
and the corresponding SG function was determined when  $2k\geq n$.
Let us also mention NIM played on a simplicial complex studied by \cite{ES96}.
These games are hypergraph NIM games with hypergraphs $\cH$ that are independence systems
(without the empty set; that is if $\emptyset\neq X\subset Y\in \cH$ then we have $X\in\cH$).
For such games $\P$ and $\N$ positons were characterized under some conditions in \cite{ES96}.

To state our main result we need to introduce some additional notation.
We denote by $\ZZP$ the set of nonnegative integers and
use $x\in\ZZP^V$ to describe a position, where coordinate $x_i$ denotes
the number of stones in pile $i \in V$.
Given a hypergraph $\cH$ and position $x \in \ZZP^V$, we denote by $\G_\cH(x)$ the SG value of $x$ in $NIM_\cH$.
The height $h_{\cH}(x)$ was defined in \cite{BGHMM15} as the maximum number of consecutive moves
that the players can make in $NIM_\cH$ starting from position $x$.

To a position $x \in \ZZP^V$ of $NIM_\cH$ let us associate the following quantities:
\begin{subequations}\label{e-myv}
\begin{align}
m(x) &=\min_{i\in V} x_i \label{e-m}\\
y^{}_\cH(x) &=h_\cH(x-m(x)e)+1 \label{e-y}\\
v^{}_\cH(x) &=\binom{y^{}_\cH(x)}{2} +
\left( \left(m(x)-\binom{y^{}_\cH(x)}{2}-1\right)\mod y^{}_\cH(x)\right),\label{e-v}
\end{align}
\end{subequations}
where $e$ is the $n$-vector of full ones. Finally, we define
\begin{empheq}[left={f(x)=\empheqlbrace}]{align}
    h_\cH(x)    &\quad \text{ if } m(x) \leq \binom{y^{}_\cH(x)}{2}    \label{e-JM-I}\\
    v^{}_\cH(x) &\quad \text{ otherwise.}                              \label{e-JM-II}
\end{empheq}

With this notation the results of \cite{BGHM15,BGHMM15,JM80} can be stated as follows:
the SG function of the considered games is
defined by \eqref{e-JM-I}-\eqref{e-JM-II}, that is, $\G = \U$.
It was a surprise to see that the ``same'' formula works for seemingly very different games.
In view of this, we call the expression \eqref{e-JM-I}-\eqref{e-JM-II}
the \emph{JM formula}, in honor of the results of Jenkyns and Mayberry \cite{JM80}.
We call a hypergraph $\cH$ a \emph{JM hypergraph} if this formula describes the SG function of $NIM_\cH$.

It is difficult to give an intuitive explanation for the above 
formula.
It was observed numerically that for many positions of hypergraph NIM games the SG value is equal to the height.
Yet, for some positions it is much less and shows surprising periodicity \cite{BGHM15}.
For JM hypergraphs this periodicity is explained by the above formula.
In particular, 
it is important to note that  $\G_\cH(x) = h_\cH(x)$  for every position  $x$  with $m(x)=0$.

Although the formula looks the same for all hypergraphs,
but it contains height  $h_\cH$  and, hence, the actual values depend on 
$\cH$.
The $\P$ positions of {\sc Nim}$_\cH$ are exactly the ones for which $h_\cH(x - m(x)e) = 0$.
This condition is easy to check even though computing  $h_\cH(x)$ may be computationally hard  \cite{BGHMM16}.

Given a hypergraph $\cH\subseteq 2^V$ and a subset $S\subseteq V$,
we denote by $\cH_S$ the \emph{induced subhypergraph}, defined as
\[
\cH_S=\{H\in\cH\mid H\subseteq S\}.
\]
A set $T \subseteq V$ is called a {\em transversal} if  $T \cap H \neq \emptyset$ for all $H\in\cH$.
A hypergraph $\cH$ is called {\em transversal-free} if no hyperedge $H \in \cH$ is a transvesal of $\cH$.
Finally, we say that $\cH$ is \emph{minimal transversal-free} if it is transversal-free, while
every nonempty proper induced subhypergraph of it is not.
A hypergraph $\cH$  is $k$-\emph{uniform} if $|H|=k$ for all $H \in \cH$.

We assume that the readers are familiar with the notion of matroids, see, e.g., \cite{Wel76,Wel2010}.
A {\em matroid} hypergraph $\cH \subseteq 2^V$ is formed by the family of  bases of a matroid on the ground set $V$.
It is {\em self-dual} if  $V \setminus H \in \cH$  for all $H \in \cH$, that is, if the corresponding matroid is self-dual.
Let us remark that in some papers self-dual matroids are called \emph{identically self-dual}.

In this paper we provide some necessary and some sufficient conditions
for a hypergraph to be JM. We summarize our main results as follows.

\smallskip

\begin{itemize}
\item[(i)] A JM hypergraph is minimal transversal-free.
\item[(ii)] A graph (that is, a $2$-uniform hypergraph) is JM if and only if it is  connected and minimal transversal-free.
We provide a complete list of JM graphs.
\item[(iii)] A matroid hypergraph is JM if and only if it is transvesal-free.
This implies that all self-dual matroid hypergraphs are JM.
\item[(iv)] Hypergraphs defined by connected $k$-edge subgraphs of a given graph are JM under certain conditions.
\item[(v)] For every integer $k$, the number of vertices of a $k$-uniform JM hypergraph is bounded by $k\binom{2k}{k}$.
\end{itemize}

\smallskip

For instance, $\binom{V}{k}=\{H\subseteq V \mid |H|=k\}$
is a self-dual matroid hypergraph if $n=2k$.
This example shows that (iii) generalizes the main result of \cite{BGHMM15}.
Another example for a self-dual matroid with $n=2k$ is the hypergraph
$\cH_{2^k}=\{H\subseteq V\mid |H\cap\{i,i+k\}|=1,~ 1\leq i\leq k\}$,
that is, the family of $2^k$ minimal transversals of a family of $k$ pairs.
It was proved in \cite{BW71} that any self-dual matroid
on $n=2k$ elements must have at least $2^k$ bases.
Thus, the latter construction is extremal in this respect.

We remark that \cite{BW71} showed also the existence of self-dual matroids on $n=2k$ elements
whenever certain type of symmetric block designs exists on $k$ points.
Since many families of such block designs are known,
the above cited result shows that numerous other families of self-dual matroids (and JM hypergraphs) exist.

For (iv) we can mention the following circulant hypergraphs defined by consecutive $k$ edges of simple cycles on $n=2k$ or $n=2k+1$ vertices.
Another example is defined by connected $k$-edge subgraphs of a rooted tree, where the root has degree $k+1$ and
each of the subtrees connected to the root have exactly $k$ edges; see Section \ref{s4} for precise definitions and details.

\medskip

Let us include here a small example to show how these quantities are computed and used.
Consider $V=\{1,2,3,4\}$ and the $2$-uniform hypergraph $\cH=\{\{1,2\},\{1,4\},\{2,3\},\{3,4\}\}$.
It is easy to see that 
$h_\cH(x_1,x_2,x_3,x_4)=\min\{x_1+x_3,x_2+x_4\}$ for any position $x\in\ZZP^V$.
Furthermore, consider positions $a=(1,2,4,2)$ and $b=(4,5,7,5)$ 
of $NIM_\cH$.
For $a$ we have $m(a)=1$ and, hence, $a - m(a)e=(0,1,3,1)$;
for $b$ we have $m(b)=4$ and, hence, $b - m(b)e=(0,1,3,1)$.
For both positions we have $h_\cH(a-m(a)e) = h_\cH(b-m(b)e)=\min\{0+3,1+1\}=2$ and, thus by \eqref{e-y},
we obtain $y_\cH(a)=y_\cH(b)=3$.

Since for position $a$ we have $1 = m(a) \leq \binom{y_\cH(a)}{2}=\binom{3}{2}=3$,
we can conclude by \eqref{e-JM-I} that \[\U_\cH(a)=h_\cH(a)=\min\{1+4,2+2\}=4.\]
For position $b$ we have $4=m(b)>\binom{y_\cH(b)}{2}=\binom{3}{2}=3$, thus, \eqref{e-JM-II} implies that
\[\U_\cH(b)=v_\cH(b)= \binom{3}{2} + \left( \left( 4-\binom{3}{2}-1\right) \mod 3\right) = 3 < h_\cH(b)=\min\{4+7,5+5\}=10.\]

We will see in Section \ref{s4} that $\cH$ is a JM hypergraph, thus,
the above computed values for these positions are in fact their SG values.
Hence, $a$ and $b$ are both $\N$ positions.
Furthermore, $\cH$ is a minimal transversal free hypergraph, and thus,
deleting a pile with the minimum size creates a subhypergraph with a transversal edge.
For both $a$ and $b$ coordinate $1$ is the unique smallest pile, and therefore we consider $S=\{2,3,4\}$.
The induced subhypergraph is $\cH_S=\{\{2,3\},\{3,4\}\}$.
Here both hyperedges are transversal. Let us choose one of them, say $H=\{2,3\}$.
With this we can consider the $H$-move in $NIM_\cH$ that decreases both piles $2$ and $3$ to the minimum of the corresponding positions:
\[
a=(1,2,4,2) ~\to~ a'=(1,1,1,2)  ~~\text{ and }~~ b=(4,5,7,5) ~\to~ b'=(4,4,4,5).
\]
It is easy to see now that both $a'$ and $b'$ are $\P$ positions.

Similar examples can be constructed for e.g., the other JM hypergraphs listed in sections \ref{s4}, for example for the six graphs that are JM. Note that the height function for those may be much more complicated.

\medskip

The structure of the paper is as follows. In Section \ref{s2}, we fix our notation and define basic concepts.
We also prove several properties of JM hypergraphs that are needed for our proofs later.
In Section \ref{sec-necessary}, we show necessary conditions for a hypergraph to be JM.
In Section \ref{s3} we provide a general sufficient condition for a hypergraph to be JM.
In Section \ref{s4} we apply the general sufficient condition, and show that several other families of hypergraphs are JM. Among them, we provide a complete characterization of 
JM graphs.
In Section \ref{s-size}, we discuss the size of $k$-uniform JM hypergraphs.
Finally, in Section \ref{sec-cr}, we present further examples of JM hypergraphs and discuss related topics.

\section{Basic Concepts and Notation}
\label{s2}

In this section we introduce the basic notation and definitions.
We prove some of the basic properties of $NIM_\cH$ games, the height, and the JM formula.

\subsection{$NIM_\cH$ Games}
\label{ss21}

We need to recall first the precise definition of impartial games and the SG function.

To a subset $S\subseteq \ZZP$ of nonnegative integers let us associate its \emph{minimal excludant}
$\mex(S)=\min\{i\in \ZZP\mid i\not\in S\}$, that is,
the smallest nonnegative integer that is not included in $S$. Note that $\mex(\emptyset)=0$.

An impartial game $\Gamma$
is played by two players over a (possibly infinite) set $X$ of positions.
They take turns to move, and the one who cannot move is the loser.
For a position $x\in X$ let us denote by $N^+(x)\subseteq X$
the set of positions $y\in N^+(x)$ that are reachable from $x$ by a single move.
For $y\in N^+(x)$ we denote by $x\to y$ such a move. We assume that the same set of moves are available for both players from every position.
We also assume that no matter how the players play and which position they start, the game ends in a finite number of moves.
The SG function $\G_\Gamma$ of the game is a mapping $\G_\Gamma:X\mapsto \ZZP$ that associates a nonnegative integer to every position, defined by the following  recursive formula:
\[
\G_\Gamma(x) ~=~ \mex\{\G_\Gamma(y)\mid y\in X, ~s.t.~ \exists x\to y\} .
\]

In our proofs we shall use the following, more combinatorial characterization of SG functions that can be derived easily from the above definition.
Assume that $\Gamma$ is an impartial game over the set of positions $X$, and
$g:X\to \ZZP$ is a given function.
Then, $g$ is the SG function of $\Gamma$ if and only if the following two conditions hold:
\begin{itemize}
\item[\rm (A)] For all positions $x\in X$ and moves $x\to y$ in $\Gamma$ we have $g(x)\neq g(y)$.
\item[\rm (B)] For all positions $x\in X$ and integers $0\leq z<g(x)$ there exists a move $x\to y$ in $\Gamma$ such that $g(y)=z$.
\end{itemize}

It is easy now to verify that for a hypergraph $\cH\subseteq 2^V$ the game $NIM_\cH$ is indeed an impartial game over the infinite set $X=\ZZP^V$ of positions.

\medskip

Let us note that all quantities used in
\eqref{e-m}--\eqref{e-JM-II}, $m(x)$, $y_\cH(x)$, $v_\cH(x)$, $h_\cH(x)$ as well as $\U_\cH$ are well defined
for an arbitrary hypergraph $\cH$.  Let us also note that the values $m(x)$ and $y_\cH(x)$ determine completely the value of $v_\cH(x)$.

\medskip

Following \cite{JM80} we call a position $x\in \ZZP^V$
\emph{long} if $m(x) \leq \binom{y^{}_\cH(x)}{2}$
(that is, if  $\U_\cH(x) = h_\cH(x)$) and call it
\emph{short} if $m(x) > \binom{y^{}_\cH(x)}{2}$ (that is, if $\U_\cH(x) = v_\cH(x)$).

According to the rules of $NIM_\cH$, if $x\to x'$ is a move then for the set $H=\{i\mid i\in V,~ x_i>x'_i\}$ we must have $H\in \cH$. We call such a move an $H$-\emph{move}.
For a subset $S\subseteq V$ let us denote by $\chi(S)$ the characteristic vector of $S$, that is, $\chi(S)_j=1$ if $j\in S$ and $\chi(S)_j=0$ if $j\not\in S$.
We denote for a position $x\in \ZZP^V$ and hyperedge $H\in\cH$ by $x^{s(H)}$ the vector $x-\chi(H)$.
Thus, $x\to x^{s(H)}$ is an $H$-move in $NIM_\cH$. We call such a move also a \emph{slow move}, since we have $x'\leq x^{s(H)}$ for all $H$-moves $x\to x'$. Let us add that for vectors we use relations $\leq$, $<$ and $=$ componentwise, as usual.

\subsection{Height}
\label{ss22}

Let us recall that the height $h(x)$ of a position $x$ is defined as the maximum number of consecutive moves players can make starting with $x$.
Any such longest sequence of moves will be called a \emph{height sequence} and any move in it is called a \emph{height move}. Let us note that in any height sequence each move can be replaced by the corresponding slow move. Furthermore, such moves can be executed in any order. In particular, if a height sequence starting in position $x$ involves an $H$-move, then $h(x^{s(H)})=h(x)-1$.

Let us next observe some basic properties of the height that will be instrumental in our proofs.
We assume in the sequel that a hypergraph $\cH\subseteq 2^V$ is fixed, and all positions mentioned are from $\ZZP^V$.

\begin{lemma}\label{l0}
For every position $x\in\ZZP^V$ we have
\[
\G_\cH(x)\leq h_\cH(x).
\]
\end{lemma}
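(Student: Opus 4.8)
The plan is to verify the two Sprague--Grundy axioms (A) and (B) from Section~\ref{ss21}, but only the ``easy half'' is needed here: we must show $\G_\cH(x) \le h_\cH(x)$, which follows from the observation that the SG value of a position cannot exceed the length of the longest play starting from it. Concretely, I would argue by induction on $h_\cH(x)$. The base case is $h_\cH(x) = 0$: then no move is available from $x$ (every hyperedge $H \in \cH$ has some pile $i \in H$ with $x_i = 0$, so no $H$-move can strictly decrease all piles in $H$), hence $N^+(x) = \emptyset$ and $\G_\cH(x) = \mex(\emptyset) = 0 = h_\cH(x)$.

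For the inductive step, suppose $h_\cH(x) = k \ge 1$ and the claim holds for all positions of smaller height. Any move $x \to x'$ in $NIM_\cH$ can be extended to a slow move $x \to x^{s(H)}$ with $x' \le x^{s(H)}$, and since removing a stone from each pile in a hyperedge shortens the longest available play by at most one, we have $h_\cH(x') \le h_\cH(x^{s(H)}) \le h_\cH(x) - 1 = k - 1$. (More precisely, any height sequence from $x'$ is a sequence of legal moves from $x$ after the first move $x \to x'$, so it has length at most $k - 1$.) By the induction hypothesis, $\G_\cH(x') \le h_\cH(x') \le k - 1$ for every $x' \in N^+(x)$. Therefore the set $\{\G_\cH(x') \mid x' \in N^+(x)\}$ is contained in $\{0, 1, \dots, k-1\}$, so its mex is at most $k$, i.e.\ $\G_\cH(x) \le k = h_\cH(x)$.

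There is essentially no obstacle here; the only point requiring a little care is the monotonicity relation $h_\cH(x') \le h_\cH(x) - 1$ for an arbitrary move $x \to x'$ (not just a slow move). This is immediate from the definition of height as the maximum number of consecutive moves: prepending the move $x \to x'$ to any height sequence of $x'$ yields a legal sequence of moves from $x$, whose length is one more than that of the height sequence of $x'$; hence $h_\cH(x) \ge h_\cH(x') + 1$. Combining this with the inductive hypothesis and the elementary fact $\mex(S) \le \max(S) + 1 \le k$ whenever $S \subseteq \{0, \dots, k-1\}$ completes the argument.
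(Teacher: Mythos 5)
Your proof is correct, but it runs in the opposite direction from the paper's. The paper argues ``upward'' from the SG value: if $\G_\cH(x)=g>0$, then by the defining recursion there is a move $x\to x'$ with $\G_\cH(x')=g-1$, and iterating this produces $g$ consecutive legal moves starting from $x$; since $h_\cH(x)$ is by definition the maximum number of consecutive moves, $h_\cH(x)\ge g$ follows immediately, with no induction on height needed. You instead induct on $h_\cH(x)$, use the monotonicity $h_\cH(x')\le h_\cH(x)-1$ for every move $x\to x'$ (which you justify correctly by prepending the move to a height sequence of $x'$), and then bound the mex: if every successor has SG value in $\{0,\dots,k-1\}$ then $\G_\cH(x)=\mex\{\G_\cH(x')\mid x'\in N^+(x)\}\le k$. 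Both arguments are sound; the paper's is shorter and exploits the structural property of SG functions (a move exists to the value $\G_\cH(x)-1$), while yours is slightly more self-contained, relying only on the recursive $\mex$ definition plus the trivial base case $h_\cH(x)=0\Rightarrow N^+(x)=\emptyset$, so either would serve.
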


\proof
By the definition of the SG function, for every position $x$ with $\G_\cH(x)>0$ we have a move $x\to x'$ such that $\G_\cH(x')=\G_\cH(x)-1$.
Thus, starting from $x$ we can make $\G_\cH(x)$ consecutive moves
(in each move decreasing the SG function exactly by $1$.) On the other hand,
the height is the maximum number of such consecutive moves, proving thus the above inequality.
\qed

\begin{lemma}\label{l1}
If $x\geq x'$ are two positions, then
\[
h_\cH(x)~\geq~ h_\cH(x') ~\geq~ h_\cH(x) - \left(\sum_{i=1}^n (x_i-x'_i)\right).
\]
In particular, if $x'$ differs from $x$ only in one of its coordinates and only by one unit, then $h_\cH(x')\geq h_\cH(x)-1$.
\end{lemma}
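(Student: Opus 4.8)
The plan is to establish the two inequalities separately, both by exhibiting explicit sequences of moves.

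For the left inequality $h_\cH(x) \geq h_\cH(x')$, I would take a height sequence starting from $x'$ and show it can be mimicked from $x$. Since $x \geq x'$, we may write $x = x' + d$ where $d \in \ZZP^V$. Consider any height sequence $x' \to x'_1 \to \cdots \to x'_{h_\cH(x')}$; let $H_1, H_2, \ldots$ be the hyperedges used. I claim the same sequence of $H$-moves is legal starting from $x$: indeed, if from position $y' \geq \bzero$ an $H$-move is possible (i.e. all piles in $H$ are strictly positive), then from any $y \geq y'$ the same move — decreasing each pile $i \in H$ by $y_i - y'_i + (\text{the amount used at }y')$ — is also legal, and lands at a position that still dominates the corresponding position in the original sequence. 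Formally one shows by induction that after executing the first $j$ moves from $x$ one reaches a position $x_j$ with $x_j \geq x'_j$, which keeps every subsequent move available. Hence at least $h_\cH(x')$ consecutive moves are possible from $x$, giving $h_\cH(x) \geq h_\cH(x')$.

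For the right inequality, it suffices (by induction on $\sum_i (x_i - x'_i)$) to prove the ``in particular'' statement: if $x'$ differs from $x$ in a single coordinate by one unit, then $h_\cH(x') \geq h_\cH(x) - 1$. So assume $x' = x - \chi(\{k\})$ for some $k \in V$. Take a height sequence from $x$ realized entirely by slow moves (legal by the remark in Section \ref{ss22} that each height move may be replaced by the corresponding slow move, and these may be reordered freely). If some slow move in this sequence uses a hyperedge $H$ with $k \in H$, reorder the sequence so that this $H$-move is executed first: then $x^{s(H)}$ and $(x')^{s(H)}$ differ only in that pile $k$ of $x^{s(H)}$ exceeds that of $(x')^{s(H)}$ by one — but wait, more useful is that $x^{s(H)} \geq (x')^{s(H)}$, hence by the left inequality already proved, $h_\cH((x')^{s(H)}) \geq$ — this does not immediately close. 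Let me instead argue directly: starting from $x'$, perform the first slow move $x \to x^{s(H)}$ but applied at $x'$; since $k \in H$ and $x'_k = x_k - 1$, the move $x' \to x' - \chi(H)$ decreases pile $k$ to $x_k - 2$, so it is legal provided $x_k \geq 2$; and $x' - \chi(H) = x^{s(H)} - \chi(\{k\})$. Now $x^{s(H)}$ admits a height sequence of length $h_\cH(x) - 1$, and $x^{s(H)} - \chi(\{k\})$ differs from it in one coordinate by one unit with a strictly smaller pile-sum gap to handle by the inductive hypothesis on the length of the remaining sequence; iterating, we lose at most one move total. The case needing care is when $x_k \in \{0, 1\}$ or when no height move touches coordinate $k$: if no height move of the chosen all-slow height sequence involves pile $k$, then that entire sequence is legal verbatim from $x'$ (it never inspects pile $k$), so $h_\cH(x') \geq h_\cH(x) \geq h_\cH(x) - 1$; and if $x_k \leq 1$ one checks the small cases by hand.

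The main obstacle is the bookkeeping in the right inequality: making precise that passing to slow moves and reordering lets us assume the single ``deficient'' coordinate is consumed (or never used) by the first move, and that the deficit does not compound across the sequence. The cleanest packaging is a single induction on $N := \sum_i (x_i - x'_i)$: the base case $N = 0$ is the left inequality (with equality), and the inductive step reduces the deficit by one unit in one coordinate, invoking the ``one coordinate, one unit'' claim above together with the already-established left inequality to control the intermediate position. I expect the write-up to run a half page, with the reordering-of-slow-moves remark doing most of the work.
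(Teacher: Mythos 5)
Your overall route coincides with the paper's: the paper proves the first inequality by observing that any sequence of moves from $x'$ can be repeated from $x$, and for the second it simply asserts that decreasing a single pile by one unit lowers the height by at most one and then sums up the unit decrements; your outer induction on $N=\sum_i(x_i-x'_i)$ is exactly that summation, and your mimicking argument for $h_\cH(x)\ge h_\cH(x')$ is the paper's first sentence. So the only substantive content of your write-up is the proof of the one-coordinate--one-unit claim, which the paper leaves implicit.

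That proof has one real hole, and it sits precisely at the crux. Your reorder-and-recurse device (put a slow $H$-move with $k\in H$ first, play $H$ from $x'$, and apply the inductive hypothesis on the remaining height to $x^{s(H)}$ versus $x^{s(H)}-\chi(\{k\})$) works when $x_k\ge 2$, and the case in which no move of the chosen height sequence touches pile $k$ is trivial; but $x_k=1$ is not a ``small case one checks by hand'': there are infinitely many such positions, your first move is illegal there (pile $k$ is already empty in $x'$), so the recursion never starts, and this is exactly the situation in which a move is genuinely lost, i.e., where the bound $h_\cH(x')\ge h_\cH(x)-1$ is tight. The fix is short and uses only tools you already invoke: regard the all-slow height sequence as a multiset $\mu$ of hyperedges with $\sum_H \mu(H)\chi(H)\le x$; since pile $k$ holds one stone, at most one member of the multiset contains $k$; delete it, and the remaining $h_\cH(x)-1$ slow moves satisfy the same feasibility inequality with respect to $x'=x-\chi(\{k\})$, hence can be played from $x'$ in any order. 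In fact this multiset argument settles the whole unit-decrement claim in one stroke (if $\sum_H\mu(H)\chi(H)_k\le x_k-1$ keep every move, otherwise drop one hyperedge containing $k$), so the reordering, the recursion on the remaining height, and the $x_k\ge 2$ case split can all be discarded. Finally, your phrase ``strictly smaller pile-sum gap'' is misleading: the gap between $x^{s(H)}$ and $x^{s(H)}-\chi(\{k\})$ is still one unit in one coordinate; what decreases is the remaining height, which is the correct induction variable.
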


\proof
Any sequence of moves starting with $x'$ can be repeated from $x$, and hence $h_\cH(x)~\geq~ h_\cH(x')$.
Furthermore decreasing one of the piles by one unit can decrease the height by at most one.
From this the second inequality follows.
\qed

\begin{corollary}\label{c0}
If $x,x'\in\ZZP^V$ are two positions such that for some index $j$ we have
\[
x'_j=x_j-1  \text{ and }   x'_i\geq x_i \text{ for all } i\neq j,
\]
then $h_\cH(x')\geq h_\cH(x)-1$.
\end{corollary}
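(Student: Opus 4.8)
The plan is to reduce Corollary \ref{c0} to two applications of Lemma \ref{l1} by inserting an intermediate position. First I would define $x'' \in \ZZP^V$ by setting $x''_j = x_j - 1$ and $x''_i = x_i$ for all $i \neq j$. This is well defined (i.e.\ has nonnegative entries) because $x'_j = x_j - 1 \geq 0$ forces $x_j \geq 1$. By construction $x''$ differs from $x$ in exactly one coordinate and only by one unit, so the ``in particular'' clause of Lemma \ref{l1} gives $h_\cH(x'') \geq h_\cH(x) - 1$.

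Next I would observe that $x' \geq x''$ holds componentwise: in coordinate $j$ we have $x'_j = x_j - 1 = x''_j$, while for every $i \neq j$ the hypothesis gives $x'_i \geq x_i = x''_i$. Hence the first (monotonicity) inequality of Lemma \ref{l1} yields $h_\cH(x') \geq h_\cH(x'')$. Chaining the two estimates gives $h_\cH(x') \geq h_\cH(x'') \geq h_\cH(x) - 1$, which is exactly the asserted inequality.

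I do not expect any real obstacle here: the statement is flagged as a corollary precisely because it is a routine combination of the monotonicity and the one-unit Lipschitz bound already packaged in Lemma \ref{l1}. The only points needing a moment's attention are bookkeeping ones --- checking that $x''$ has nonnegative coordinates, and keeping the directions of the two inequalities straight so that the chain closes up correctly --- and both are immediate.
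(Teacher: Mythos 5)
Your proof is correct and is exactly the argument the paper intends (the paper's own proof is just the one-line remark that it follows from Lemma \ref{l1}): introduce the intermediate position $x''$ obtained from $x$ by lowering coordinate $j$ by one, apply the one-unit bound to the pair $x,x''$ and monotonicity to the pair $x'',x'$, and chain. Nothing is missing.
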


\proof
Follows from Lemma \ref{l1}.
\qed

\begin{lemma}\label{l2}
Assume $x\to x'$ is an $H$-move for some $H\in\cH$. Then for every integer $z$ such that
$h_\cH(x')\leq z \leq h_\cH(x^{s(H)})$ there exists an $H$-move
$x\to x''$ for which $h_\cH(x'')=z$ and $x'\leq x''\leq x^{s(H)}$.
\end{lemma}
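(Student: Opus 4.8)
The plan is to reduce the statement to a discrete intermediate value argument along a monotone lattice path from $x'$ to $x^{s(H)}$. First I would record the elementary fact that $x'\leq x^{s(H)}$ holds componentwise: for $i\in H$ the $H$-move $x\to x'$ strictly decreases pile $i$, hence $x'_i\leq x_i-1=(x^{s(H)})_i$, while for $i\notin H$ both sides equal $x_i$. By Lemma \ref{l1} this already yields $h_\cH(x')\leq h_\cH(x^{s(H)})$, so the interval of admissible values $z$ is nonempty and well posed.

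Next I would build a chain of positions $x'=w^{(0)}\leq w^{(1)}\leq\cdots\leq w^{(N)}=x^{s(H)}$ in which each $w^{(t+1)}$ is obtained from $w^{(t)}$ by increasing a single coordinate by one unit; such a chain exists because $x^{s(H)}-x'$ has nonnegative integer entries, with $N=\sum_{i=1}^n\big((x^{s(H)})_i-x'_i\big)$. Applying Lemma \ref{l1} (or Corollary \ref{c0}) to each consecutive pair gives $h_\cH(w^{(t)})\leq h_\cH(w^{(t+1)})\leq h_\cH(w^{(t)})+1$, so the integer sequence $h_\cH(w^{(0)}),\dots,h_\cH(w^{(N)})$ is nondecreasing with steps of size at most $1$. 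Hence it attains every integer value between its endpoints $h_\cH(x')$ and $h_\cH(x^{s(H)})$; in particular there is an index $t$ with $h_\cH(w^{(t)})=z$, and I set $x''=w^{(t)}$.

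Finally I would check that $x\to x''$ is a genuine $H$-move with the required sandwiching. By construction $x'\leq x''\leq x^{s(H)}$. For $i\notin H$ this forces $x''_i=x_i$, and for $i\in H$ it gives $x''_i\leq(x^{s(H)})_i=x_i-1<x_i$, so $x''$ is obtained from $x$ by strictly decreasing exactly the piles in $H$, which is precisely an $H$-move. That completes the argument. The only point needing any care is the monotone-step claim, which rests on the fact—already packaged in Lemma \ref{l1}—that bumping one coordinate up by one unit changes the height by exactly $0$ or $1$; nothing here is genuinely hard, so I anticipate no real obstacle, the whole lemma being a bookkeeping consequence of the monotonicity of $h_\cH$ together with the observation that every intermediate vector on the path from $x'$ to $x^{s(H)}$ automatically encodes an $H$-move.
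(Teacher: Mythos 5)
Your argument is correct and is essentially the paper's own proof: the paper likewise interpolates between $x'$ and $x^{s(H)}$ by unit coordinate steps (just traversed downward from $x^{s(H)}$ to $x'$ rather than upward), invokes Lemma \ref{l1} to see the height changes by at most one per step, and then verifies exactly as you do that every intermediate position is reached from $x$ by an $H$-move. No substantive difference.
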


\proof
Consider a sequence of positions $x^0=x^{s(H)}\geq x^1 \geq \dots  \geq x^p=x'$,
where $\sum_{j=1}^n (x_{j}^{i-1}-x_{j}^{i})=1$ for all $i=1,\dots ,p$.
By Lemma \ref{l1} we have $h_\cH(x^{i-1})\geq h_\cH(x^i) \geq h_\cH(x^{i-1})-1$ implying that
for every integer $h_\cH(x^p)=h_\cH(x')\leq z \leq h_\cH(x^{s(H)})=h_\cH(x^0)$
there exists an index $0\leq i \leq p$ such that $h_\cH(x^i)=z$.
Note also that by the definition of these vectors, we have for all $i=0,...,p$ the relations
$x^i_j\leq x^0_j<x_j$ for all $j\in H$ and $x_j=x^p_j\leq x^i_j\leq x^0_j=x_j$ for all $j\not\in H$.
Thus, $x^i\leq x$ and $\{j\mid x^i_j<x_j\}=H$. Thus all of these positions are reachable from $x$ by an $H$-move.
\qed

To illustrate the above we consider the same hypergraph
$\cH=\{\{1,2\},\{1,4\},\{2,3\},\{3,4\}\}$ as in the introduction,
and the positions $x=(4,5,2,3)$ and $x'=(1,1,2,3)$.
Then  $x\to x'$ is an $H$-move with $H=\{1,2\}$.
With this hyperedge we have $x^{s(H)}=(3,4,2,3)$.
Thus, a possible sequence illustrating the proof of the lemma could be
$x^0=(3,4,2,3)$, $x^1=(2,4,2,3)$, $x^2=(2,3,2,3)$, $x^3=(2,2,2,3)$, $x^4=(1,2,2,3)$ and $x^5=x'=(1,1,2,3)$.
Recall that for this hypergraph the height function is given by the expression
$h_\cH(x_1,x_2,x_3,x_4)=\min\{x_1+x_3,x_2+x_4\}$.
Thus, for the above sequence we obtain the height values 
$5$, $4$, $4$, $4$, $3$, $3$, which form an interval $[3,5]$ as claimed.

\medskip

\begin{lemma}\label{l2-y}
Consider an arbitrary position $x\in\ZZP^V$ and hyperedge $H\in\cH$ such that $h_\cH(x^{s(H)})=h_\cH(x)-1$ and $m(x^{s(H)})=m(x)-1$.
Then we have $y_\cH(x^{s(H)})\geq y_\cH(x)$.
\end{lemma}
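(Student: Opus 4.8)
The plan is to reduce the claimed inequality directly to the monotonicity of the height established in Lemma \ref{l1}. First I would unwind the definition \eqref{e-y}: since $y_\cH(z)=h_\cH(z-m(z)e)+1$ for every position $z$, the assertion $y_\cH(x^{s(H)})\geq y_\cH(x)$ is equivalent to
\[
h_\cH\bigl(x^{s(H)}-m(x^{s(H)})e\bigr)\;\geq\;h_\cH\bigl(x-m(x)e\bigr).
\]
So it suffices to compare the two arguments of $h_\cH$ coordinatewise and then invoke monotonicity.

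Next I would use the hypothesis $m(x^{s(H)})=m(x)-1$ together with the identity $x^{s(H)}=x-\chi(H)$ and $e-\chi(H)=\chi(V\setminus H)$ to rewrite
\[
x^{s(H)}-m(x^{s(H)})e \;=\; x-\chi(H)-(m(x)-1)e \;=\; (x-m(x)e)+\chi(V\setminus H).
\]
Reading off coordinates: entry $i$ of the left-hand side is $x_i-m(x)$ when $i\in H$ and $x_i-m(x)+1$ when $i\notin H$, so that $x^{s(H)}-m(x^{s(H)})e\geq x-m(x)e$ holds componentwise. Applying Lemma \ref{l1} to this pair of positions yields $h_\cH(x^{s(H)}-m(x^{s(H)})e)\geq h_\cH(x-m(x)e)$, which is exactly the inequality needed, and the lemma follows.

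I do not expect a genuine obstacle here: the only point requiring a little care is the bookkeeping in the displayed identity, namely noticing that subtracting the \emph{smaller} quantity $m(x^{s(H)})=m(x)-1$ lifts every off-$H$ coordinate by one unit above its value in $x-m(x)e$, so monotonicity applies in the correct direction. Let me also remark that the hypothesis $h_\cH(x^{s(H)})=h_\cH(x)-1$ plays no role in this particular conclusion; it is presumably recorded in the statement only because it is the setting in which the lemma is later used.
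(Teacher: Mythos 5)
Your proof is correct and is essentially the paper's own argument: the paper likewise observes that $m(x^{s(H)})=m(x)-1$ forces $x^{s(H)}-m(x^{s(H)})e \geq x-m(x)e$ componentwise and then invokes Lemma \ref{l1}; you merely spell out the coordinatewise bookkeeping. Your side remark that the hypothesis $h_\cH(x^{s(H)})=h_\cH(x)-1$ is not used in the proof is also accurate.
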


\proof
Since $m(x^{s(H)})=m(x)-1$ we have the inequality $x^{s(H)}-m(x^{s(H)})e \geq x-m(x)e$ and thus the claim follows from Lemma \ref{l1}.
\qed

\subsection{JM Formula}
\label{ss23}

For a positive integer $\eta\in\ZZP$ let us associate the set
\begin{equation}\label{e-Z}
Z(\eta) ~=~ \left\{ i\in\ZZP \left| \binom{\eta}{2} \leq i < \binom{\eta +1}{2} \right.\right\}.
\end{equation}
It is immediate to see the following properties:
\begin{lemma}\label{l-Z}
If $\eta\neq \eta'$ then $Z(\eta)\cap Z(\eta')=\emptyset$. Furthermore, we have
\[
\ZZP ~=~ \bigcup_{\eta=1}^{\infty} Z(\eta).
\]
\qed
\end{lemma}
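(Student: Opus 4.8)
The plan is to treat the numbers $\binom{\eta}{2}$ for $\eta=1,2,3,\dots$ as the breakpoints of a partition of $\ZZP$ into consecutive blocks; then $Z(\eta)$ is exactly the $\eta$-th block, and the lemma becomes the statement that these blocks are disjoint and exhaust $\ZZP$.

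First I would record the two elementary facts that drive everything: the map $\eta\mapsto\binom{\eta}{2}$ is strictly increasing on $\{1,2,3,\dots\}$, since $\binom{\eta+1}{2}-\binom{\eta}{2}=\eta\geq 1$; and it starts at $\binom{1}{2}=0$ and tends to $+\infty$. For the disjointness, take $\eta<\eta'$, so $\eta+1\leq\eta'$ and hence $\binom{\eta+1}{2}\leq\binom{\eta'}{2}$ by monotonicity. Any $i\in Z(\eta)$ satisfies $i<\binom{\eta+1}{2}\leq\binom{\eta'}{2}$, so $i\notin Z(\eta')$; thus $Z(\eta)\cap Z(\eta')=\emptyset$.

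For the covering, fix $i\in\ZZP$. Because $\binom{1}{2}=0\leq i$ and $\binom{\eta}{2}\to\infty$, the set $\{\eta\geq 1\mid\binom{\eta}{2}\leq i\}$ is nonempty and bounded above, hence has a maximum $\eta^{*}$. By definition $\binom{\eta^{*}}{2}\leq i$, and by maximality $\binom{\eta^{*}+1}{2}>i$, so $i\in Z(\eta^{*})$. This gives $\ZZP\subseteq\bigcup_{\eta\geq 1}Z(\eta)$; the reverse inclusion is immediate from the definition of $Z(\eta)$, and together with the previous paragraph the union is in fact disjoint.

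There is no real obstacle here: the only mild subtlety is the degenerate first block, where $Z(1)=\{0\}$, which the convention $\binom{1}{2}=0$ absorbs automatically. Equivalently, the whole argument can be summarized as the assertion that the function sending $i\in\ZZP$ to the unique $\eta$ with $\binom{\eta}{2}\leq i<\binom{\eta+1}{2}$ is well defined, its existence being the covering claim and its uniqueness the disjointness claim.
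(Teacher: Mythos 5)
Your proof is correct, and it is exactly the routine verification (monotonicity of $\eta\mapsto\binom{\eta}{2}$ for disjointness, the maximal $\eta$ with $\binom{\eta}{2}\leq i$ for covering) that the paper regards as immediate — the paper in fact states this lemma with no written proof at all. Nothing further is needed.
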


Let us recall next that by the definitions of the quantities in \eqref{e-myv} the value $v_\cH(x)$ depends only on the pair of integers $m(x)$ and $y_\cH(x)$.

\begin{lemma}\label{l-v}
For an arbitrary positive integer $\eta\in\ZZP$ we have
\[
\left\{ v_\cH(x) ~\left|~ x\in\ZZP^V, ~ y_\cH(x)=\eta\right.\right\} ~=~ Z(\eta).
\]
\end{lemma}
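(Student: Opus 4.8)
The plan is to show the two inclusions separately, using the explicit shape of $v_\cH$ in \eqref{e-v} and the freedom we have to choose positions $x$ with prescribed $m(x)$ and $y_\cH(x)=\eta$. Fix $\eta\ge 1$. First I would record the elementary fact that the map $x\mapsto v_\cH(x)$ factors through the pair $(m(x),y_\cH(x))$, as already noted before the statement; so it suffices to understand, for a fixed target value $\eta$ of $y_\cH$, which integers arise as $v_\cH$ when $m(x)$ ranges over all values compatible with $y_\cH(x)=\eta$.

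For the inclusion ``$\subseteq Z(\eta)$'': take any $x$ with $y_\cH(x)=\eta$ and set $m=m(x)$. By \eqref{e-v} we have $v_\cH(x)=\binom{\eta}{2}+r$ where $r=\bigl(m-\binom{\eta}{2}-1\bigr)\bmod \eta\in\{0,1,\dots,\eta-1\}$. Hence $\binom{\eta}{2}\le v_\cH(x)\le \binom{\eta}{2}+\eta-1=\binom{\eta+1}{2}-1$, which is exactly membership in $Z(\eta)$ by \eqref{e-Z}. This direction is pure arithmetic and needs no game theory.

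For the reverse inclusion ``$\supseteq Z(\eta)$'': I need to exhibit, for each $i\in Z(\eta)$, a position $x$ with $y_\cH(x)=\eta$ and $v_\cH(x)=i$. Write $i=\binom{\eta}{2}+r$ with $0\le r\le \eta-1$. The natural candidate is a constant vector $x=me$ for a suitable $m$: then $x-m(x)e=\mathbf 0$, so $h_\cH(x-m(x)e)=h_\cH(\mathbf 0)=0$, giving $y_\cH(x)=1$ — wait, that is the wrong value of $y_\cH$. So instead I would take $x$ of the form $m e + w$ where $w\in\ZZP^V$ has $m(w)=0$ and $h_\cH(w)=\eta-1$; such a $w$ exists for every $\eta\ge 1$ because starting from $\mathbf 0$ (height $0$) one can always increase coordinates along a chosen hyperedge to raise the height by any prescribed amount — more precisely, one fixes a single hyperedge $H\in\cH$ (nonempty, since $\cH\ne\emptyset$ and $\emptyset\notin\cH$) and repeatedly applies it; equivalently use Lemma \ref{l1} to get a vector of any height between $0$ and $h_\cH$ of a large constant-times-$\chi(H)$ vector, keeping some coordinate at $0$. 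For such $x$ we get $h_\cH(x-m(x)e)=h_\cH(w)=\eta-1$, hence $y_\cH(x)=\eta$ as required, and then choosing $m=m(x)$ appropriately (namely $m\equiv \binom{\eta}{2}+1+r \pmod{\eta}$, with $m$ large enough that $m(x)=m$, i.e. $m\le$ every coordinate of $w$ would be automatic after adding $me$) makes $v_\cH(x)=\binom{\eta}{2}+r=i$. Running $r$ over $0,\dots,\eta-1$ covers all of $Z(\eta)$.

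The main obstacle is the construction in the last paragraph: I must be sure that for every $\eta\ge 1$ there genuinely is a vector $w$ with $m(w)=0$ and $h_\cH(w)=\eta-1$, and that I can simultaneously pin down $m(me+w)$ to a chosen residue class mod $\eta$. The first point is where the hypothesis $\cH\ne\emptyset$ is used: pick $H\in\cH$ and $j\notin H$ (if $H=V$ then $\cH=\{V\}$ and one treats this small case directly), then $w=(\eta-1)\chi(H)$ has $w_j=0$ so $m(w)=0$, and $h_\cH(w)=\eta-1$ because one can apply the $H$-move $\eta-1$ times and no longer (every move must decrease some coordinate of $H$, and only $|H|$-many coordinates, all bounded by $\eta-1$, actually the bottleneck coordinate argument gives exactly $\eta-1$). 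Once $w$ is fixed, adding $me$ shifts every coordinate by $m$, so $m(me+w)=m$, and since $v_\cH$ depends on $m$ only through $m\bmod\eta$ (together with the fixed value $\eta$), any residue — hence any $r\in\{0,\dots,\eta-1\}$, hence any $i\in Z(\eta)$ — is attained by choosing $m$ in the right class and large enough to be the true minimum. Combining the two inclusions gives the claimed equality.
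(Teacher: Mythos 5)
Your overall plan is the same as the paper's: the inclusion $\subseteq Z(\eta)$ is pure arithmetic from \eqref{e-v}, and surjectivity is obtained by exhibiting, for each residue class mod $\eta$, a position $x=me+w$ with $m(w)=0$, $h_\cH(w)=\eta-1$ and $m$ in the prescribed class (the paper's proof is a one-liner, and this construction is exactly what it carries out explicitly in Lemma \ref{l-H1H2-my}). The problem is your justification of the key step: for an arbitrary $H\in\cH$ it is \emph{not} true that $h_\cH((\eta-1)\chi(H))=\eta-1$. Your ``bottleneck'' reasoning (``every move must decrease some coordinate of $H$, all bounded by $\eta-1$'') only bounds the number of moves by $(\eta-1)|H|$, since different moves may decrease different coordinates of $H$. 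Concretely, if $H$ contains two disjoint hyperedges of $\cH$ (say $\cH\supseteq\{\{1\},\{2\},\{1,2\}\}$ on $V=\{1,2,3\}$ with $H=\{1,2\}$), then from $(\eta-1)\chi(H)$ one can make $\eta-1$ slow $\{1\}$-moves followed by $\eta-1$ slow $\{2\}$-moves, so $h_\cH((\eta-1)\chi(H))\geq 2(\eta-1)$ and your candidate has $y_\cH(x)=2\eta-1\neq\eta$. The fix is the one the paper uses in Lemma \ref{l-H1H2-my}: take $H$ \emph{inclusion-minimal} in $\cH$; then the only hyperedges contained in the support of $(\eta-1)\chi(H)$ are $H$ itself, so only $H$-moves are available and the height is exactly $\eta-1$. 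Alternatively, the intermediate-value argument you mention in passing (decrement $N\chi(H)$ one unit at a time and apply Lemma \ref{l1}) is sound and also closes this gap, but as written your final paragraph commits to the incorrect bottleneck claim.

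A second, smaller slip: ``if $H=V$ then $\cH=\{V\}$'' is a non sequitur; if some hyperedge equals $V$ you should simply choose a different (minimal) hyperedge, which can fail only in the degenerate case $\cH=\{V\}$. In that case every position satisfies $y_\cH(x)=1$, so for $\eta\geq 2$ the left-hand side of the claimed equality is empty and the case cannot be ``treated directly''; the paper implicitly excludes it (compare the hypothesis ``contains a hyperedge different from $V$'' in Lemma \ref{l-H1H2-my}), and for the hypergraphs that matter later (transversal-free ones) no hyperedge equals $V$, but your write-up should either impose this hypothesis or pick $H\neq V$ explicitly rather than assert the degenerate case is harmless.
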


\proof
Follows by \eqref{e-v} and the fact that $m(x)$ can take arbitrary integer values modulo $y_\cH(x)=\eta$.
\qed

\begin{lemma}\label{l-my}
For an arbitrary position $x\in\ZZP^V$ and move $x\to x'$ in $NIM_\cH$ we have $(m(x),y_\cH(x))\neq (m(x'),y_\cH(x'))$.
\end{lemma}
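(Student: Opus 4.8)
The plan is to show that a single move in $NIM_\cH$ cannot leave the pair $(m(x),y_\cH(x))$ unchanged. First I would set $x'=x^{s(H)}$ for the relevant hyperedge $H\in\cH$; since every $H$-move satisfies $x'\le x^{s(H)}\le x$ and decreases exactly the coordinates in $H$, it suffices by monotonicity considerations to understand what happens along the slow move, and then argue that intermediate positions between $x'$ and $x^{s(H)}$ cannot recover the original pair either. The key quantities to track are $m$ and $y_\cH$, and the natural split is according to whether the move decreases the minimum pile or not.

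The main case analysis I would carry out is the following. If $m(x')<m(x)$, then in particular $m(x')\ne m(x)$ and we are done immediately. So assume $m(x')=m(x)=:m$; this forces $H$ to avoid all piles attaining the minimum, i.e. $x_i>m$ for every $i\in H$, hence the move strictly decreases some coordinate of $x-me$ while keeping it nonnegative. Then $x-me$ and $x'-me$ differ by a genuine (slow) $H$-move in $NIM_\cH$ from $x-me$, so $h_\cH(x'-me)\le h_\cH(x-me)-1$ by the remark that executing a move drops the height by at least one (or by Lemma \ref{l1} applied coordinatewise, combined with the fact that $H\in\cH$ gives an actual available move). By definition \eqref{e-y} this yields $y_\cH(x')=h_\cH(x'-me)+1<h_\cH(x-me)+1=y_\cH(x)$, so again the pair changes. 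Combining the two cases gives $(m(x),y_\cH(x))\ne(m(x'),y_\cH(x'))$ for every move.

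The one point that needs a little care — and what I expect to be the main obstacle — is the inequality $h_\cH(x'-m(x')e)\le h_\cH(x-m(x)e)-1$ in the case $m(x')=m(x)$. The subtlety is that the map $x\mapsto x-m(x)e$ need not commute naively with a single $H$-move: we need that the shifted positions still differ by a legitimate $H$-move, which is exactly where the hypothesis $x_i>m$ for all $i\in H$ is used — it guarantees $(x-me)_i\ge 1$ for $i\in H$, so $x-me \to (x-me)^{s(H)} = (x'-me)$ (if the original move is taken slow) is a valid move from $x-me$, and then the general fact that any available move reduces the height by exactly one on a height-sequence, together with Lemma \ref{l1}, pins down the strict drop. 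For a general (not necessarily slow) $H$-move $x\to x'$ with $m(x')=m(x)$, one first passes to $x^{s(H)}$: since $m(x^{s(H)})=m(x)$ as well and $h_\cH(x^{s(H)})=h_\cH(x)-1$, we get $y_\cH(x^{s(H)})<y_\cH(x)$, and then Lemma \ref{l1} applied to $x'\le x^{s(H)}$ (both shifted by $m(x)e$) gives $y_\cH(x')\le y_\cH(x^{s(H)})<y_\cH(x)$, completing the argument.
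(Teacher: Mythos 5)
Your argument is essentially the paper's own: the paper likewise splits on whether $m(x')=m(x)$ and, in the equal case, uses $x'\leq x-\chi(H)$ to get $x-m(x)e \geq \chi(H)+x'-m(x')e$, so that $x-m(x)e \to x'-m(x')e$ is itself a legal $H$-move and $y_\cH(x)\geq y_\cH(x')+1$ follows from \eqref{e-y}; your second paragraph is exactly this observation. One slip to fix: in your final paragraph the claim $h_\cH(x^{s(H)})=h_\cH(x)-1$ is false in general (a slow $H$-move need not be a height move; only $h_\cH(x^{s(H)})\leq h_\cH(x)-1$ holds), and in any case it concerns the unshifted heights, so it does not by itself yield $y_\cH(x^{s(H)})<y_\cH(x)$. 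Fortunately that whole detour through $x^{s(H)}$ and Lemma \ref{l1} is unnecessary: in the case $m(x')=m(x)=m$ you have $x_i>m$ for all $i\in H$, hence $(x-me)_i>(x'-me)_i\geq 0$ for $i\in H$ while the two shifted positions coincide off $H$, so $x-me\to x'-me$ is a legal $H$-move for an arbitrary (not only slow) move $x\to x'$; the resulting drop $h_\cH(x'-me)\leq h_\cH(x-me)-1$ that you already invoke in the second paragraph finishes the proof.
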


\proof
If $m(x)=m(x')$ and $x\to x'$ is an $H$-move for a hyperedge $H\in \cH$,
then we have the inequality $x-\chi(H)\geq x'$, where $\chi(H)$ is the characteristic vector of $H$.
This implies
\[
x-m(x)e \geq \chi(H) + x'-m(x')e
\]
from which $y_\cH(x)\geq y_\cH(x')+1$ follows by \eqref{e-y}.
\qed

\begin{lemma}\label{l4}
A position $x \in \ZZP^V$ is 
long if and only if $v_\cH(x)\geq m(x)$.
\end{lemma}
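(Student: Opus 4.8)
The plan is to unwind the definitions and reduce the biconditional to a simple arithmetic comparison involving the two quantities $m(x)$ and $y_\cH(x)$, which (as noted right before the statement) together determine $v_\cH(x)$. Write $\eta = y_\cH(x)$ and $m = m(x)$ for brevity. By definition $x$ is long precisely when $m \leq \binom{\eta}{2}$, and short precisely when $m > \binom{\eta}{2}$. So I want to show: $m \leq \binom{\eta}{2}$ if and only if $v_\cH(x) \geq m$.

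First I would handle the long case. Assume $m \leq \binom{\eta}{2}$. From \eqref{e-v} we have $v_\cH(x) = \binom{\eta}{2} + r$ where $r = \left((m - \binom{\eta}{2} - 1)\bmod \eta\right) \geq 0$, hence $v_\cH(x) \geq \binom{\eta}{2} \geq m$, giving the desired inequality. Conversely, I would argue the contrapositive: if $x$ is short, i.e. $m > \binom{\eta}{2}$, I must show $v_\cH(x) < m$. Here the key observation is Lemma~\ref{l-v}: $v_\cH(x) \in Z(\eta)$, so $\binom{\eta}{2} \leq v_\cH(x) < \binom{\eta+1}{2}$. This bounds $v_\cH(x)$ from above by $\binom{\eta+1}{2} - 1 = \binom{\eta}{2} + \eta - 1$. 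Meanwhile, since $m > \binom{\eta}{2}$ and $m$ is an integer, the relation $v_\cH(x) = \binom{\eta}{2} + \left((m - \binom{\eta}{2} - 1)\bmod \eta\right)$ tells us that $v_\cH(x) \equiv m - 1 \pmod{\eta}$ and that $v_\cH(x) - \binom{\eta}{2} \in \{0, 1, \dots, \eta - 1\}$ equals the residue of $m - \binom{\eta}{2} - 1$. Writing $m - \binom{\eta}{2} = q\eta + s + 1$ with $0 \leq s \leq \eta - 1$ and $q \geq 0$ (possible because $m - \binom{\eta}{2} \geq 1$), we get $v_\cH(x) = \binom{\eta}{2} + s$ while $m = \binom{\eta}{2} + q\eta + s + 1 \geq \binom{\eta}{2} + s + 1 > v_\cH(x)$. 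This establishes $v_\cH(x) < m$ in the short case, completing the contrapositive and hence the biconditional.

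The argument is essentially a direct computation from \eqref{e-v} together with the disjoint-cover structure of the sets $Z(\eta)$ from Lemmas~\ref{l-Z} and~\ref{l-v}; there is no genuine obstacle. The one point that requires a little care is the modular bookkeeping in the short case: one must be sure that the quotient $q$ in the division $m - \binom{\eta}{2} - 1 = q\eta + s$ is nonnegative, which holds precisely because shortness gives $m - \binom{\eta}{2} \geq 1$, so that $m - \binom{\eta}{2} - 1 \geq 0$ and the $\bmod$ operation behaves as expected. I would state this explicitly to avoid any sign ambiguity in the $\bmod$. Everything else is routine substitution.
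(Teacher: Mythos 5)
Your proof is correct and follows essentially the same route as the paper: in the long case you bound $v_\cH(x)\geq\binom{\eta}{2}\geq m(x)$ using that the residue is nonnegative, and in the short case you use $m(x)-\binom{\eta}{2}-1\geq 0$ so that the residue is at most $m(x)-\binom{\eta}{2}-1$, giving $v_\cH(x)\leq m(x)-1$; your explicit division $m-\binom{\eta}{2}-1=q\eta+s$ is just a spelled-out version of the paper's inequality $a\geq a\bmod\eta$ for $a\geq 0$.
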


\proof
Note first that if $x$ is long 
then $m(x)\leq \binom{y_\cH(x)}{2}$ by \eqref{e-v}, and thus,
by Lemma \ref{l-v} it follows that $m(x)\leq v_\cH(x)$.
On the other hand, if $x$ is short 
then we have $m(x)>\binom{y_\cH(x)}{2}$ by \eqref{e-JM-II}, and thus,  $m(x)-\binom{y_\cH(x)}{2}-1\geq 0$, implying
\[
m(x)-\binom{y_\cH(x)}{2}-1 \geq \left(\left(m(x)-\binom{y_\cH(x)}{2}-1\right) \mod y_\cH(x) \right)
\]
from which by \eqref{e-v} it follows that $m(x)-1\geq v_\cH(x)$.
\qed

\begin{lemma}\label{l5}
For an arbitrary position $x\in\ZZP^V$ and move $x\to x'$ such that $m(x)\leq \U_\cH(x')$ position $x'$ is long. 
\end{lemma}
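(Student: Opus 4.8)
The plan is to argue by contradiction: suppose the move $x\to x'$ satisfies $m(x)\le \U_\cH(x')$ but $x'$ is \emph{short} rather than long. First I would unpack what ``short'' gives us. By the definition of the JM formula \eqref{e-JM-I}--\eqref{e-JM-II}, a short position $x'$ has $\U_\cH(x') = v_\cH(x')$, and by Lemma \ref{l4} (in the ``only if'' direction, reading off its short case) we get the strict inequality $v_\cH(x') < m(x')$. Combining these two facts, $\U_\cH(x') = v_\cH(x') \le m(x') - 1$.

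Next I would bring in the hypothesis and a simple monotonicity observation about moves. Since $x\to x'$ is an $H$-move for some $H\in\cH$, we have $x'_i \le x_i$ for every $i\in V$ (piles in $H$ strictly decrease, all others are unchanged), so $x' \le x$ componentwise and hence $m(x') = \min_{i\in V} x'_i \le \min_{i\in V} x_i = m(x)$. Chaining everything together yields
\[
m(x') \;\le\; m(x) \;\le\; \U_\cH(x') \;=\; v_\cH(x') \;\le\; m(x') - 1,
\]
which is impossible. Therefore $x'$ cannot be short, i.e. $x'$ is long, completing the proof.

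I do not expect a serious obstacle here; the argument is a short chain of inequalities. The only points requiring care are (i) correctly invoking Lemma \ref{l4} to convert ``$x'$ short'' into the strict bound $v_\cH(x') < m(x')$ rather than the non-strict one, and (ii) noting the elementary but essential fact that a single move of $NIM_\cH$ is componentwise non-increasing, so that the minimum pile size cannot grow, giving $m(x') \le m(x)$.
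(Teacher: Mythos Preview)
Your proof is correct and follows essentially the same contradiction argument as the paper: assume $x'$ is short, invoke Lemma~\ref{l4} to get $\U_\cH(x')=v_\cH(x')<m(x')$, combine with $m(x')\le m(x)$ and the hypothesis $m(x)\le\U_\cH(x')$ to reach a contradiction. The paper's version is just slightly terser, but the logic is identical.
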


\proof
If $x'$ were short 
then by Lemma \ref{l4} we would get $\U_\cH(x')=v_\cH(x')<m(x')\leq m(x)$, contradicting $m(x)\leq \U_\cH(x')$.
\qed



\begin{lemma}\label{l-H1H2-my}
Let $\cH,\hH\subseteq 2^V$ be two hypergraphs such that $\hH$ contains a hyperedge different from $V$.
Then for every position $x\in\ZZP^V$ there exists a position $\hx\in\ZZP^V$ such that $m(x)=m(\hx)$ and $y_\cH(x)=y_\hH(\hx)$.
\end{lemma}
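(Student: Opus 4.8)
The plan is to reduce the statement to a claim about the single hypergraph $\hH$, namely: for every pair of integers $\mu\ge 0$ and $\tau\ge 1$ there is a position $\hx\in\ZZP^V$ with $m(\hx)=\mu$ and $y_\hH(\hx)=\tau$. Applying this with $\mu=m(x)$ and $\tau=y_\cH(x)$ (which satisfies $\tau\ge 1$, since $y_\cH$ is always at least $1$) immediately gives the lemma, because $x$ enters the desired conclusion only through the two integers $m(x)$ and $y_\cH(x)$. Since $y_\hH(\hx)=h_\hH(\hx-\mu e)+1$, and adding $\mu e$ to a vector that has a zero coordinate does not change which coordinate is smallest, this claim is in turn equivalent to the assertion that \emph{for every integer $t\ge 0$ there exists $z\in\ZZP^V$ with $m(z)=0$ and $h_\hH(z)=t$}; one then sets $\hx=z+\mu e$ with $t=\tau-1$.

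To prove this last assertion I would use the hypothesis that $\hH$ contains a hyperedge $H_0$ with $\emptyset\ne H_0\ne V$ (nonemptiness holds since $\emptyset\notin\hH$). Fix a vertex $v\in V\setminus H_0$; then any position $z$ with $z_v=0$ automatically has $m(z)=0$. Now build a monotone chain of positions $\bzero=z^0\le z^1\le\cdots\le z^M$, with $M=t\,|H_0|$, in which $z^M=t\chi(H_0)$ and each $z^s$ is obtained from $z^{s-1}$ by increasing a single coordinate — necessarily one belonging to $H_0$ — by exactly one unit; in particular $z^s_v=0$, hence $m(z^s)=0$, for every $s$. Applying Lemma \ref{l1} to consecutive terms gives $h_\hH(z^{s-1})\le h_\hH(z^s)\le h_\hH(z^{s-1})+1$ for each $s$. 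At one end $h_\hH(z^0)=h_\hH(\bzero)=0$, while at the other end $h_\hH(z^M)=h_\hH(t\chi(H_0))\ge t$: indeed the slow $H_0$-move sends $j\chi(H_0)$ to $j\chi(H_0)-\chi(H_0)=(j-1)\chi(H_0)$, so $h_\hH(j\chi(H_0))\ge h_\hH((j-1)\chi(H_0))+1$ for $1\le j\le t$ and the bound follows by induction. A sequence of nonnegative integers that starts at $0$, ends at a value $\ge t$, and increases by at most $1$ at each step must attain the value $t$; choosing $s^{*}$ with $h_\hH(z^{s^{*}})=t$ and putting $z=z^{s^{*}}$ finishes this step.

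Putting the pieces together: with $\hx:=z^{s^{*}}+m(x)e$ one has $m(\hx)=m(x)+\min_i z^{s^{*}}_i=m(x)$ and $\hx-m(\hx)e=z^{s^{*}}$, whence $y_\hH(\hx)=h_\hH(z^{s^{*}})+1=(y_\cH(x)-1)+1=y_\cH(x)$, as required.

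The main obstacle — and the only place where a naive attempt goes wrong — is that one cannot simply take $z=t\chi(H_0)$: its height can strictly exceed $t$ (for instance when $\hH$ also contains many small hyperedges contained in $H_0$), so the equality $h_\hH(z)=t$ is not automatic. The remedy, which is really the heart of the argument, is the discrete intermediate-value step along a monotone unit-step chain, for which Lemma \ref{l1} is exactly the right tool; pinning the coordinate $v\notin H_0$ to $0$ throughout the chain keeps $m(\cdot)=0$ and is precisely where the hypothesis $H_0\ne V$ is used.
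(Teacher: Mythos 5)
Your proof is correct, but it takes a different route from the paper's. The paper's proof is a one-step direct construction: it picks a \emph{minimal} hyperedge $H\in\hH$ with $H\neq V$ and sets $\hx=m(x)e+(y_\cH(x)-1)\chi(H)$; minimality guarantees that from $\hx-m(\hx)e$ only $H$-moves are possible, so the height is \emph{exactly} $y_\cH(x)-1$ and no further argument is needed. You instead work with an arbitrary hyperedge $H_0\neq V$, accept that $h_\hH(t\chi(H_0))$ may overshoot $t$, and recover exactness by a discrete intermediate-value argument along a unit-step monotone chain, using Lemma \ref{l1} — exactly the pitfall you flag at the end is what the paper's choice of a minimal hyperedge eliminates at the outset. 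Both arguments are sound and of comparable length; the paper's is slightly more economical (no chain, no IVT step), while yours is marginally more robust in that it never needs to invoke minimality and isolates the reusable fact that for every $t\geq 0$ there is a position with $m=0$ and prescribed height $t$, a statement in the same spirit as the interval arguments (Lemma \ref{l2}) used elsewhere in the paper. Your reduction of the lemma to the pair of integers $(m(x),y_\cH(x))$ and the final assembly $\hx=z^{s^*}+m(x)e$ are both correct.
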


\proof
Choose a minimal hyperedge $H\in \hH$, and consider the position $\hx=m(x)e + (y_\cH(x)-1)\chi(H)$. Since $\hH$ is assumed to have a hyperedge different from $V$, we can choose $H$ such that $H\neq V$.
Therefore, we have $m(\hx)=m(x)$, and $\hx-m(\hx)e=(y_\cH(x)-1)\chi(H)$, implying $y_\cH(x)=y_\hH(\hx)$, since $H$ was chosen as a minimal hyperedge, and thus from position $\hx-m(\hx)e$ we can have only $H$-moves.
\qed

Let us also note that if $\cH\subseteq \hH\subseteq 2^V$ be two (nested) hypergraphs,
then for every position $x\in\ZZP^V$ we have $h_\cH(x)\leq h_\hH(x)$, by the definition of height.

\section{Necessary Conditions}
\label{sec-necessary}

%
In this section we prove some properties of JM hypergraphs.

For every hypergraph $\cH\subseteq 2^V$, we assume that $V=\bigcup_{H \in \cH} H$.

Let us recall that a hypergraph $\cH \subseteq 2^V$ is \textit{not connected}
if $V$ can be partitioned into two nonempty subsets $V_1$ and $V_2$ such that every hyperedge of $\cH$
is contained in either $V_1$ or $V_2$. Otherwise $\cH$ is called \textit{connected}.

\begin{lemma}\label{necessaryconditions}
A JM hypergraph $\cH$ is connected.
\end{lemma}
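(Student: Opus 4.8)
The plan is to argue by contraposition: suppose $\cH\subseteq 2^V$ is not connected, so $V=V_1\cup V_2$ is a partition into nonempty parts with every hyperedge contained in $V_1$ or $V_2$. I will exhibit a position $x$ where the JM formula gives the wrong SG value. The natural choice is a position supported on $V_1$ alone, say $x=\chi(V_1)$ (or more generally a position with $m(x)=0$ whose positive coordinates all lie in $V_1$). Since $m(x)=0$, we have $x-m(x)e=x$, hence $y_\cH(x)=h_\cH(x)+1$, and because $m(x)=0\le\binom{y_\cH(x)}{2}$ the position is long, so the JM formula asserts $\G_\cH(x)=\U_\cH(x)=h_\cH(x)$. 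The point is that from such an $x$ one can use both the hyperedges inside $V_1$ (which can actually touch the support) and, crucially, the hyperedges inside $V_2$ — but every pile in $V_2$ has value $0$, so no hyperedge $H\subseteq V_2$ with $H\neq\emptyset$ admits a move, since a move must strictly decrease every pile in $H$. So in fact only the $V_1$-side matters for available moves.

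First I would set things up so that the game $NIM_\cH$ restricted to positions supported on $V_1$ is, move-for-move, identical to $NIM_{\cH_{V_1}}$ on $\ZZP^{V_1}$ (embedding $V_1$-positions into $\ZZP^V$ by padding with zeros). Then $\G_\cH(x)=\G_{\cH_{V_1}}(x)$ and $h_\cH(x)=h_{\cH_{V_1}}(x)$ for every such $x$. Now I would pick $x$ cleverly: since $V_2\neq\emptyset$, take $x$ to be a position with all coordinates $0$ except set the coordinates in $V_1$ so that $h_{\cH_{V_1}}(x)$ is some convenient small value — for instance, choose $x$ with $h_\cH(x)$ as large as possible subject to $m(x)=0$ in a way I can compute, or simply choose $x=\bzero$ if $h_\cH(\bzero)$ behaves badly. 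Wait — $h_\cH(\bzero)=0$ and $\U_\cH(\bzero)=0$, so that is consistent. The real leverage is a different position: consider the position $x$ with $x_i=1$ for $i\in V_1$ and $x_i=0$ for $i\in V_2$. If $\cH_{V_1}\neq\emptyset$ this allows at least one move (to the all-zero vector), so $h_\cH(x)\ge 1$; but more importantly I want to compare with a position $x'$ with $x'_i=1$ for $i\in V_1$ and $x'_i=1$ for the whole of $V_2$ too, i.e. $x'=e$. From $x'$ one has $m(x')=1$, $x'-m(x')e=\bzero$, so $y_\cH(x')=1$, hence $\binom{y_\cH(x')}{2}=0<1=m(x')$, so $x'$ is short and $\U_\cH(x')=v_\cH(x')=0$. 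But $h_\cH(x')\ge h_\cH(x)\ge 1$ since $x'\ge x$, and moreover $x'$ genuinely admits a move (any hyperedge inside $V_1$, decrementing those piles), so actually $\G_\cH(x')\ge 1>0=\U_\cH(x')$ — wait, I need $\G_\cH(x')>0$, equivalently that $\bzero$ is reachable... $\bzero$ is the unique $\P$-position candidate; in fact $x'=e$ can move to $e-\chi(H)$ for $H\subseteq V_1$, and I just need the resulting position to have SG value $0$ reachable, or directly that $x'$ is an $\N$-position, i.e. $h_\cH(x'-m(x')e)\neq 0$ fails — but $x'-m(x')e=\bzero$ has height $0$, so by the stated criterion $x'$ is a $\P$-position, $\G_\cH(x')=0$, which matches $\U_\cH(x')=0$. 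So $e$ is not a counterexample; connectivity is needed for a subtler reason.

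The correct counterexample comes from a position where the two components "disagree" about the height. Take $x$ supported only on $V_1$ with $h_\cH(x)=h_{\cH_{V_1}}(x)=:t\ge 1$ chosen so that $t$ is a long-position height for $\cH_{V_1}$ but the discrepancy appears through condition (B): the SG function of $NIM_\cH$ at $x$ equals $\G_{\cH_{V_1}}(x)$, and I would compare this with $\U_\cH(x)$, which involves $h_\cH$ computed in the full hypergraph — but on zero-padded positions $h_\cH=h_{\cH_{V_1}}$, so in fact $\U_\cH(x)=\U_{\cH_{V_1}}(x)$ too for these positions. So if $\cH_{V_1}$ itself is JM we get no contradiction this way either; the obstruction must use that $\cH$ has hyperedges in $V_2$ and a position with both components active. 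I therefore expect the main obstacle to be finding the right position: the cleanest route is to take a position $x$ with $x_i=c$ for $i\in V_1$, $x_i=0$ for $i\in V_2$ with $c\ge 1$ large, show $\G_\cH(x)=\G_{\cH_{V_1}}(x)=\G_\cH(x-0\cdot e)$ grows like (roughly) $c\cdot\omega$ for a suitable $\omega$ — concretely, using that $NIM_{\cH_{V_1}}$ lets you repeatedly strip off $\chi(V_1')$ for a hyperedge — whereas $\U_\cH(x)$ is forced to be $h_\cH(x)=h_{\cH_{V_1}}(x)$ again, no contradiction... Given this, the genuinely efficient plan is instead: assume $\cH$ not connected, let $x$ be a position with $m(x)=0$ but with positive coordinates in BOTH $V_1$ and $V_2$, chosen so that $h_{\cH}(x)=\max(h_{\cH}(x|_{V_1}),h_{\cH}(x|_{V_2}))$ is attained only by the $V_1$-part while the SG value (by a disjunctive-sum-type argument, since moves in the two components are completely independent once both are "active") is the NIM-sum $\G_{\cH_{V_1}}(x|_{V_1})\oplus\G_{\cH_{V_2}}(x|_{V_2})$, which can be made strictly larger than the height. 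The hard part will be making this disjunctive-sum observation precise and choosing the two component-positions so that their heights and SG values separate — I would handle it by picking both components to carry positive height (so genuinely summing) and invoking the standard Sprague-Grundy theorem for sums of independent impartial games together with Lemma \ref{l0} to get $\G_\cH(x)=\G_{\cH_{V_1}}(x|_{V_1})\oplus\G_{\cH_{V_2}}(x|_{V_2})>h_\cH(x)$ whenever the two summands are, e.g., distinct powers of two exceeding $\max$ of the two heights, which contradicts $\G_\cH=\U_\cH\le h_\cH$.
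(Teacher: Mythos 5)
Your final plan has the right ingredients (a position active in both components, the disjunctive-sum decomposition $\G_\cH(x)=\G_{\cH_{V_1}}(x|_{V_1})\oplus\G_{\cH_{V_2}}(x|_{V_2})$, comparison with the JM formula), but the contradiction you aim for is impossible. You want to force $\G_\cH(x)>h_\cH(x)$, yet by Lemma \ref{l0} the SG value of \emph{any} position in \emph{any} hypergraph NIM is at most its height, so no choice of component positions can achieve this. Concretely: when both components are active, the height is the \emph{sum} of the component heights, $h_\cH(x)=h_{\cH_{V_1}}(x|_{V_1})+h_{\cH_{V_2}}(x|_{V_2})$ (not the maximum, as you wrote at one point), each summand of the NIM-sum satisfies $\G_{\cH_{V_i}}(x|_{V_i})\le h_{\cH_{V_i}}(x|_{V_i})$, and $a\oplus b\le a+b$; hence $\G_\cH(x)\le h_\cH(x)$ always. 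In particular ``distinct powers of two exceeding the max of the two heights'' cannot occur as SG values of the components.

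The contradiction must go the other way: make the NIM-sum \emph{strictly smaller} than the height while the position is long, so that the JM formula forces $\U_\cH(x)=h_\cH(x)>\G_\cH(x)$. This is exactly what the paper does: take minimal hyperedges $H_1,H_2$ in different components and set $x_i=1$ on $H_1$, $x_i=3$ on $H_2$, $x_i=0$ elsewhere. Then $\G_\cH(x)=1\oplus 3=2$, while $h_\cH(x)=4$; since $m(x)\in\{0,1\}$ and correspondingly $y_\cH(x)\in\{3,5\}$, we have $m(x)\le\binom{y_\cH(x)}{2}$, so $x$ is long and $\U_\cH(x)=4\ne 2$. Your plan also leaves the long/short verification for your chosen position unaddressed, which is needed to know which branch of the formula applies; with the paper's choice this check is immediate. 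So the overall strategy is salvageable, but as written the decisive step fails.
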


\proof
Let us assume indirectly that $\cH$ is not connected. Let $H_1,H_2 \in \cH$ be two minimal hyperedges in two different connected components of $\cH$. Consider the position $x$ defined as follows:
$$x_i= \begin{cases}
1 & \textrm{if }i \in H_1 \\
3 & \textrm{if }i \in H_2\\
0 & \textrm{otherwise.}
\end{cases}$$
This position has $\G_{\cH}(x)=2$ (the $NIM$ sum of $1$ and $3$). It has $m(x)\in \{0,1\}$ and correspondingly $y^{}_{\cH}(x)\in\{3,5\}$.
Therefore, $x$ is long.  
Since $h_{\cH}(x)=4$ we can conclude $\U_{\cH}(x) \neq \G_{\cH}(x)$, which contradicts the assumption.
\qed

\begin{lemma}\label{l3}
If $\cH$ is a JM hypergraph, then it is minimal transversal-free.
\end{lemma}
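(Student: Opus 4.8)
The plan is to argue by contradiction in two stages, corresponding to the two halves of "minimal transversal-free": first that $\cH$ must be transversal-free, and second that every nonempty proper induced subhypergraph fails to be transversal-free. For the first part, suppose some hyperedge $H_0\in\cH$ is a transversal, i.e. $H_0\cap H\neq\emptyset$ for all $H\in\cH$. I want to exhibit a position $x$ where the JM formula gives the wrong SG value. The natural candidate is a position with $m(x)=0$; recall that for such positions the JM formula forces $\G_\cH(x)=h_\cH(x)$. So it suffices to produce a position $x$ with $m(x)=0$ and $\G_\cH(x)<h_\cH(x)$, contradicting that $\cH$ is JM. Concretely I would try $x=\chi(V\setminus H_0)$ (or a small multiple thereof on a well-chosen vertex): since $H_0$ is a transversal, any hyperedge $H$ meets $H_0$, so on the coordinates of $H_0$ the pile size is $0$ and no move is possible at all from $x$ using the slow move; more carefully, I need to check that from $x$ the players can make at least one move (so $h_\cH(x)\geq 1$) but the SG value is forced to be small. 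One clean way: pick a minimal hyperedge $H\in\cH$ with $H\cap H_0\neq\emptyset$, put one stone on every vertex of $H\setminus H_0$ and enough stones elsewhere off $H_0$ so that $h_\cH$ is large, while any move immediately lands in a position with $m=0$ and small height, so $\G_\cH$ cannot reach $h_\cH(x)$. The bookkeeping here is the part I expect to be fiddly and is the main obstacle: one must choose $x$ so that $h_\cH(x)$ is provably larger than the number of reachable SG values, which forces a careful use of the transversal property (every move "touches" $H_0$, so the total number of stones removable from $H_0$-coordinates is bounded, while height can be made large using coordinates outside $H_0$).

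For the second part, assume $\cH$ is transversal-free but has a nonempty proper induced subhypergraph $\cH_S$ (with $S\subsetneq V$, $\cH_S\neq\emptyset$) that is still transversal-free. I would like to again produce a bad position. The idea is to "hide" the vertices of $V\setminus S$ by giving them huge pile sizes and treat the game restricted to $S$: if $x$ is supported (in the sense of being small) on $S$ and very large on $V\setminus S$, then for many moves the game behaves like $NIM_{\cH_S}$, but $\cH_S\neq\cH$, so the heights differ. More precisely, since deleting $V\setminus S$ still leaves a transversal-free hypergraph, one can find positions on $S$ with height at least $1$ and $m=0$ relative to the $S$-coordinates, yet the global height $h_\cH(x)$ (which also sees hyperedges not contained in $S$) is strictly larger, again contradicting $\G_\cH(x)=h_\cH(x)$ for $m(x)=0$ positions. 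Here too the obstacle is quantitative: I must ensure the discrepancy between $h_{\cH}(x)$ and the number of SG values reachable is genuinely positive, which again leans on transversality of $\cH_S$ to cap how fast the $S$-part of the game can end.

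In both stages the skeleton is the same and mirrors the proof of Lemma \ref{necessaryconditions}: exhibit a position $x$ with $m(x)\in\{0,1\}$ that is \emph{long} (so the JM formula predicts $\U_\cH(x)=h_\cH(x)$), compute $h_\cH(x)$ explicitly from the combinatorial structure, compute (or bound from above) $\G_\cH(x)$ as a $NIM$-like sum or via Lemma \ref{l0} together with a counting of reachable values, and observe $\U_\cH(x)\neq\G_\cH(x)$. The genuinely new ingredient compared to Lemma \ref{necessaryconditions} is translating "$H_0$ is a transversal" (resp. "$\cH_S$ is transversal-free") into the statement "the height of this particular $x$ exceeds what the SG function can possibly reach." I would first nail down that implication as a small self-contained sublemma — something like: if $H_0\in\cH$ is a transversal, then there is a position $x$ with $m(x)=0$ for which every move reduces $\sum_{i\in H_0}x_i$ and hence $h_\cH(x)$ cannot be realized as a strictly decreasing chain of SG values — and the rest is routine substitution into \eqref{e-JM-I}.
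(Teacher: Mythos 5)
Your first stage has a genuine gap, both in the overall reduction and in the concrete construction. You reduce the transversal case to ``find a position with $m(x)=0$ and $\G_\cH(x)<h_\cH(x)$,'' but such a witness need not exist: take $V=\{1,2,3\}$ and $\cH=\{\{1,2\},\{2,3\}\}$, where $\{1,2\}$ is a transversal. Every position with a zero pile reduces to the two-pile game ``strictly decrease both piles,'' whose SG value equals $\min$ of the two piles, i.e.\ equals the height; so \emph{every} $m(x)=0$ position satisfies $\G_\cH(x)=h_\cH(x)$, and your plan cannot produce a contradiction there (the same happens for $\cH=\{\{1\}\}$). Your concrete candidate makes this worse: if all stones are placed off $H_0$, then, precisely because $H_0$ is a transversal, every hyperedge contains a zero pile, so $h_\cH(x)=0$ rather than ``large.'' The paper's witness is of the opposite type: the all-ones position $x=e$ has $m(x)=1$, $y_\cH(x)=1$, hence is \emph{short} with $\U_\cH(x)=0$; a slow $H_0$-move leads to $x'$ with $h_\cH(x')=0$, hence $\G_\cH(x')=0$ by Lemma~\ref{l0}, and property (A) of the SG function forces $\G_\cH(x)\neq 0=\U_\cH(x)$. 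So the violation used is of property (A) at a short position, not a value gap at an $m=0$ position.

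Your second stage is closer in spirit to the paper but the decisive step is missing and the construction is suboptimal. With huge piles on $V\setminus S$ you never justify the claimed inequality $\G_\cH(x)<h_\cH(x)$: hyperedges not contained in $S$ remain playable many times, so the game does not reduce to $NIM_{\cH_S}$ and the asserted ``cap'' on reachable SG values is unsubstantiated. The paper instead sets the piles \emph{to zero} outside $S$ and positive on $S$: then every move uses some $H\in\cH_S$, the position and all its successors have $m=0$ and are long, so JM gives $\G_\cH=h_\cH$ on all of them; since $\G_\cH(x)=h_\cH(x)>0$, property (B) yields a move to a position of SG value $0$, i.e.\ of height $0$, and this is possible only if the hyperedge used intersects every hyperedge of $\cH_S$ (the zeroed piles inside $S$ lie in that hyperedge). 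This directly produces a transversal in every nonempty proper induced subhypergraph, with no counting of reachable values needed. To repair your write-up you would essentially have to switch to these two witnesses; as it stands, neither stage goes through.
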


\proof
Let us assume first indirectly that $H_0 \in \cH$ is a transversal of $\cH$, say $H_0\in\cH$ intersects all hyperedges of $\cH$.
Consider the position $x\in\ZZP^V$ defined by $x_i=1$, $i\in V$. For this position we have $m(x)=1$, $y_\cH(x)=1$, and $v_\cH(x)=0$.
Thus, $x$ is short 
and $\U_\cH(x)=0$. On the other hand, a slow $H_0$-move $x\to x'$ takes us into a position with $x'_i=0$, $i\in H_0$.
Since $H_0$ intersects all hyperedges of $\cH$ we must have $h_\cH(x')=0$. Thus by Lemma \ref{l0} we have $\G_\cH(x')=0$, which implies by property (A) of the SG function that $\G_\cH(x)\neq 0$,
or in other words that $\G_\cH(x)\neq \U_\cH(x)$, which implies on its turn that $\cH$ is not JM.
This contradiction implies that $\cH$ is transversal-free.

To see minimality with respect to the transversal-freeness, let us consider an arbitrary proper subset $S\subset V$
for which the induced subhypergraph $\cH_S$ is not empty, and a position with $x_i=0$ for all $i\in V\setminus S$, and $x_i>0$ for all $i\in S$.
Every move from $x$ is an $H$-move for some $H\in\cH_S$. Furthermore, $h_\cH(x)>0$, $m(x)=0$ (since $V\setminus S\neq\emptyset$), and thus,
all such positions are long,  
implying (by our assumption that $\cH$ is JM) that
$\G_\cH(x)=\U_\cH(x)=h_\cH(x)>0$ for all such positions.
Thus, we must have a move $x\to x'$ such that $\G_\cH(x')=h_\cH(x')=0$.
This is possible only if this move is an $H$-move for a hyperedge $H\in\cH_S$ that intersects all hyperedges of $\cH$.
\qed


In the rest of this section, we study further properties of transversal-free hypergraphs.
This is used to obtain sufficient conditions for a hypergraph to be JM.

\begin{lemma}\label{l3T}
If $\cH$ is transversal-free and $x\to x'$ is a move in $NIM_\cH$, then we have $h_\cH(x')\geq m(x)$.
\end{lemma}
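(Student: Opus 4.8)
The plan is to argue that after any single move $x \to x'$ in $NIM_\cH$, the remaining position $x'$ still admits at least $m(x)$ further consecutive moves. The move $x \to x'$ is an $H$-move for some $H \in \cH$, so $x'_i = x_i$ for $i \notin H$ and $x'_i < x_i$ for $i \in H$. Since $\cH$ is transversal-free, the hyperedge $H$ is \emph{not} a transversal of $\cH$, so there is a hyperedge $H' \in \cH$ with $H' \cap H = \emptyset$. On the coordinates of $H'$, the position $x'$ agrees with $x$, hence $x'_i = x_i \geq m(x)$ for all $i \in H'$. The idea is then to repeatedly apply $H'$-moves (or slow $H'$-moves): each such move is available as long as every pile in $H'$ is still positive, and a single slow $H'$-move decreases each pile in $H'$ by exactly one. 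Starting from $x'$, all piles in $H'$ are at least $m(x)$, so we can perform at least $m(x)$ consecutive $H'$-moves before any pile of $H'$ is exhausted. This exhibits a sequence of $m(x)$ consecutive moves from $x'$, giving $h_\cH(x') \geq m(x)$.

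The one subtlety to handle carefully is whether $H'$ genuinely exists: transversal-freeness of $\cH$ says no hyperedge of $\cH$ is a transversal, in particular $H$ itself is not, which by definition means $H$ misses some hyperedge $H' \in \cH$, i.e. $H' \cap H = \emptyset$. (Note $H' \neq \emptyset$ since $\emptyset \notin \cH$ by our standing assumption, so the $m(x)$ moves on $H'$ are non-vacuous whenever $m(x) > 0$; if $m(x)=0$ the claim $h_\cH(x') \geq 0$ is trivial.) One should also note the degenerate possibility $H' = H$ is excluded because $H \cap H = H \neq \emptyset$, so $H'$ is a genuinely different hyperedge disjoint from $H$; but in fact we do not even need $H' \neq H$ for the argument — all we use is disjointness from $H$, which forces $x'_i = x_i$ on $H'$.

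I expect the main (and essentially only) obstacle to be purely a matter of careful bookkeeping: confirming that each of the $m(x)$ successive slow $H'$-moves is legal, i.e. that after $j < m(x)$ such moves each pile $i \in H'$ still has at least $x_i - j \geq m(x) - j \geq 1$ stones, so the move strictly decreasing exactly the piles in $H'$ remains available. Once this is observed, the bound $h_\cH(x') \geq m(x)$ follows immediately from the definition of height as the maximum length of a sequence of consecutive moves. No appeal to the JM formula or the Sprague-Grundy function is needed here; the lemma is a statement about the height function and the combinatorics of transversal-free hypergraphs alone.
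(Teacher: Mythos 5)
Your argument is correct and is essentially the paper's own proof: transversal-freeness gives a hyperedge $H'$ disjoint from the moved hyperedge $H$, the piles of $H'$ are untouched and hence at least $m(x)$, and $m(x)$ slow $H'$-moves from $x'$ give $h_\cH(x')\geq m(x)$. The extra bookkeeping you flag (legality of each successive slow $H'$-move) is routine and matches the paper's reasoning.
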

\proof
Since $\cH$ is transversal-free,  for every hyperedge $H\in\cH$
there exists an $H'\in\cH$ such that $H\cap H'=\emptyset$. Consequently, for every move $x\to x'$ we must have $h_\cH(x')\geq m(x)$.
This is because if $x\to x'$ is an $H$-move and $H'\in\cH$ is disjoint from $H$, then we have $x'_i=x_i\geq m(x)$ for all $i\in H'$.
Thus we can make at least $m(x)$ slow $H'$-moves, implying the claim.
\qed

\begin{lemma}\label{l6}
If $\cH$ is a transversal-free hypergraph, $x\in\ZZP^V$ is a long 
position, and $x \to x'$ is a move such that $0\leq \U_\cH(x')<m(x)$, then
$x'$ is a short 
position, for which we have $m(x')\leq m(x)$ and $m(x)-m(x')+1\leq y_\cH(x')<y_\cH(x)$.
\end{lemma}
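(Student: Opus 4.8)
The plan is to verify the four assertions — that $x'$ is short, that $m(x')\le m(x)$, that $y_\cH(x')<y_\cH(x)$, and that $y_\cH(x')\ge m(x)-m(x')+1$ — more or less independently, using Lemma \ref{l3T} (the only place transversal-freeness enters) together with the definitions \eqref{e-y}, \eqref{e-v}.

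First I would dispose of the monotonicity fact: an $H$-move only decreases coordinates, so $x'\le x$ componentwise, whence $m(x')=\min_i x'_i\le\min_i x_i=m(x)$. Next, to see that $x'$ is short, I would argue by contradiction: if $x'$ were long, then $\U_\cH(x')=h_\cH(x')$, but Lemma \ref{l3T} gives $h_\cH(x')\ge m(x)$, contradicting the hypothesis $\U_\cH(x')<m(x)$. Hence $x'$ is short and $\U_\cH(x')=v_\cH(x')$.

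For the upper bound $y_\cH(x')<y_\cH(x)$, the key observation is that $v_\cH(x')\ge\binom{y_\cH(x')}{2}$ directly from \eqref{e-v}, so $\binom{y_\cH(x')}{2}\le v_\cH(x')=\U_\cH(x')<m(x)\le\binom{y_\cH(x)}{2}$, the last inequality because $x$ is long; since $\eta\mapsto\binom{\eta}{2}$ is strictly increasing on the positive integers (cf.\ Lemma \ref{l-Z}), this forces $y_\cH(x')<y_\cH(x)$.

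It remains to prove the lower bound $y_\cH(x')\ge m(x)-m(x')+1$, and this is the step I expect to require the most care. Write the move $x\to x'$ as an $H$-move. Since $\cH$ is transversal-free there is $H'\in\cH$ with $H\cap H'=\emptyset$; the $H$-move leaves every coordinate of $H'$ untouched, so $x'_i=x_i\ge m(x)$ for all $i\in H'$, and consequently $(x'-m(x')e)_i\ge m(x)-m(x')$ for all $i\in H'$. Hence, starting from $x'-m(x')e$, one can perform $m(x)-m(x')$ consecutive slow $H'$-moves — each legal, since every $H'$-coordinate starts at value $\ge m(x)-m(x')$ and decreases by one per move — which gives $h_\cH(x'-m(x')e)\ge m(x)-m(x')$, i.e.\ $y_\cH(x')\ge m(x)-m(x')+1$ by \eqref{e-y}. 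The only genuine subtleties are checking the legality of these slow moves and noting the degenerate case $m(x')=m(x)$, where the claimed bound reduces to the trivial $y_\cH(x')\ge1$; neither the shortness of $x'$ nor the upper bound presents any real difficulty.
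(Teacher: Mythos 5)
Your proof is correct and follows essentially the same route as the paper: shortness of $x'$ via Lemma \ref{l3T}, monotonicity of $m$, a comparison of binomial coefficients for $y_\cH(x')<y_\cH(x)$, and the disjoint hyperedge $H'$ giving $m(x)-m(x')$ slow moves from $x'-m(x')e$ for the lower bound. The only (harmless) difference is that you derive $y_\cH(x')<y_\cH(x)$ from $\binom{y_\cH(x')}{2}\le v_\cH(x')<m(x)\le\binom{y_\cH(x)}{2}$, whereas the paper uses the shortness inequality $m(x')>\binom{y_\cH(x')}{2}$ together with $m(x')\le m(x)\le\binom{y_\cH(x)}{2}$.
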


\proof
By Lemma \ref{l3T} we have $h_\cH(x')\geq m(x)$.
Now, if $x'$ were long 
then $\U_\cH(x')=h_\cH(x')\geq m(x)$ would follow, contradicting our assumption that $\U_\cH(x')<m(x)$.
The inequality $m(x')\leq m(x)$ holds for any move $x\to x'$. Since $x$ is long 
and  $x'$  is short, 
we have the inequalities
\[
\binom{y_\cH(x)}{2}~\geq~ m(x)~\geq~ m(x') ~>~ \binom{y_\cH(x')}{2}
\]
from which $y_\cH(x')<y_\cH(x)$ follows.
Assume next that $x\to x'$ is an $H$-move for some hyperedge $H\in\cH$.
Since $\cH$ is transversal-free, there exists $H'\in\cH$ such that $H\cap H'=\emptyset$.
Then we have $x'_i=x_i\geq m(x)$ for all $i\in H'$, implying that $x_i'-m(x')\geq m(x)-m(x')$ for all $i\in H'$. Thus, from position $x'-m(x')e$ we can make at least $m(x)-m(x')$ slow $H'$-moves,
and thus $y_\cH(x')\geq m(x)-m(x')+1$ follows by \eqref{e-y}.
\qed

\begin{lemma}\label{l7}
If $\cH$ is a transversal-free hypergraph, $x\in\ZZP^V$ is a short 
position, and $x\to x'$ is a move such that $0 \leq \U_\cH(x')<\U_\cH(x)$, then
$x'$ must also be a short 
position, for which we have $m(x')\leq m(x)$ and $m(x)-m(x')+1\leq y_\cH(x')\leq y_\cH(x)$ with $(m(x),y_\cH(x))\neq (m(x'),y_\cH(x'))$.
\end{lemma}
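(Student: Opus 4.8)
The plan is to follow the structure of the proof of Lemma~\ref{l6}, modifying only the places where that argument used the hypothesis that the starting position is long. First I would establish that $x'$ is short. Assume not; then $x'$ is long, so $\U_\cH(x')=h_\cH(x')$, and since $\cH$ is transversal-free Lemma~\ref{l3T} gives $h_\cH(x')\geq m(x)$. On the other hand $x$ is short, so $\U_\cH(x)=v_\cH(x)$ and, by Lemma~\ref{l4}, $v_\cH(x)\leq m(x)-1$. Combining, $\U_\cH(x')\geq m(x)>m(x)-1\geq\U_\cH(x)$, which contradicts the hypothesis $\U_\cH(x')<\U_\cH(x)$. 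Hence $x'$ is short, and in particular $\U_\cH(x)=v_\cH(x)$ and $\U_\cH(x')=v_\cH(x')$.

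Next, $m(x')\leq m(x)$ holds for any move of $NIM_\cH$ because no pile is ever increased. For the lower bound $m(x)-m(x')+1\leq y_\cH(x')$ I would reuse the last part of the proof of Lemma~\ref{l6} without change: writing $x\to x'$ as an $H$-move and choosing, by transversal-freeness, an $H'\in\cH$ disjoint from $H$, the piles indexed by $H'$ are untouched, so $x'_i=x_i\geq m(x)$ for $i\in H'$ and thus $x'_i-m(x')\geq m(x)-m(x')$ there; then from $x'-m(x')e$ one can make at least $m(x)-m(x')$ consecutive slow $H'$-moves, and \eqref{e-y} yields the bound. Finally, $(m(x),y_\cH(x))\neq(m(x'),y_\cH(x'))$ is exactly the conclusion of Lemma~\ref{l-my} applied to the move $x\to x'$.

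The one genuinely new point is the upper bound $y_\cH(x')\leq y_\cH(x)$. In Lemma~\ref{l6} the stronger inequality $y_\cH(x')<y_\cH(x)$ fell out for free from the chain $\binom{y_\cH(x)}{2}\geq m(x)\geq m(x')>\binom{y_\cH(x')}{2}$, which was available precisely because $x$ was long and $x'$ short. Here both positions are short, so that chain is unavailable and one must argue differently: since $\U_\cH$ coincides with $v_\cH$ at both ends, the hypothesis $\U_\cH(x')<\U_\cH(x)$ says $v_\cH(x')<v_\cH(x)$; by Lemma~\ref{l-v} we have $v_\cH(x)\in Z(y_\cH(x))$ and $v_\cH(x')\in Z(y_\cH(x'))$, and by \eqref{e-Z} and Lemma~\ref{l-Z} the sets $Z(\eta)$ are consecutive integer intervals, so every element of $Z(\eta)$ is smaller than every element of $Z(\eta')$ when $\eta<\eta'$. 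Were $y_\cH(x')>y_\cH(x)$ this would force $v_\cH(x')>v_\cH(x)$, a contradiction; hence $y_\cH(x')\leq y_\cH(x)$. I expect this last step to be the only real obstacle; everything else either quotes an earlier lemma or copies an argument already made for Lemma~\ref{l6}.
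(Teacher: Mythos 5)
Your proposal is correct and follows essentially the same route as the paper: the same contradiction via Lemma \ref{l3T} and Lemma \ref{l4} to show $x'$ is short, the same disjoint-hyperedge argument for the lower bound on $y_\cH(x')$, and the same use of Lemma \ref{l-v} (the interval structure of the sets $Z(\eta)$) to get $y_\cH(x')\leq y_\cH(x)$ from $v_\cH(x')<v_\cH(x)$. The only cosmetic difference is that you cite Lemma \ref{l-my} for $(m(x),y_\cH(x))\neq(m(x'),y_\cH(x'))$, whereas the paper deduces it directly from $v_\cH(x')<v_\cH(x)$; both are valid.
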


\proof
Since $x$ is short, 
we have $\U_\cH(x)=v_\cH(x)<m(x)$ by Lemma \ref{l4}. We also have $h_\cH(x')\geq m(x)$ by Lemma \ref{l3T}. Thus,
$\U_\cH(x')<\U_\cH(x)=v_\cH(x)<m(x)\leq h_\cH(x')$ follows by our assumption, implying $\U_\cH(x')\neq h_\cH(x')$.
Thus, $x'$ is not long. 
The inequality $m(x')\leq m(x)$ holds for any move $x\to x'$.
Lemma \ref{l-v} and $v_\cH(x')<v_\cH(x)$ implies $y_\cH(x')\leq y_\cH(x)$.
Furthermore, $v_\cH(x')< v_\cH(x)$ also implies $(m(x),y_\cH(x))\neq (m(x'),y_\cH(x'))$ since the pair $(m(x),y_\cH(x))$ determines $v_\cH(x)$ uniquely.
Assume next that $x\to x'$ is an $H$-move for some hyperedge $H\in\cH$. Since $\cH$ is transversal-free, there exists $H'\in\cH$ such that $H\cap H'=\emptyset$.
Then we have $x'_i=x_i\geq m(x)$ for all $i\in H'$, and thus,
$y_\cH(x')\geq m(x)-m(x')+1$ follows by \eqref{e-y}.
\qed

\begin{lemma}\label{l8}
If a hypergraph $\cH\subseteq 2^V$ is transversal-free, then the function $\U_\cH$ satisfies property (A),
that is, for all moves $x\to x'$ in $NIM_\cH$ we have $\U_\cH(x)\neq \U_\cH(x')$.
\end{lemma}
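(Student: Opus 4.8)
The plan is to check property (A) by a short case analysis on whether the endpoints $x,x'$ of a move $x\to x'$ are \emph{long} or \emph{short}, exploiting that $\U_\cH=h_\cH$ on long positions and $\U_\cH=v_\cH$ on short positions. First I would record three facts valid for every move $x\to x'$, say an $H$-move: (i) $m(x')\le m(x)$, since a move never increases a pile; (ii) $h_\cH(x)\ge h_\cH(x')+1$, obtained by prepending the move $x\to x'$ to a height sequence of $x'$; and (iii) $h_\cH(x')\ge m(x)$, which is exactly Lemma~\ref{l3T} and is the first point where transversal-freeness is used. Combining these with Lemma~\ref{l4} (a position is short iff $v_\cH<m$) settles every case in which at least one of $x,x'$ is long: if $x$ is long then $\U_\cH(x)=h_\cH(x)$, while $\U_\cH(x')$ equals $h_\cH(x')$ (if $x'$ is long) or $v_\cH(x')<m(x')\le m(x)\le h_\cH(x')$ (if $x'$ is short), so $\U_\cH(x')\le h_\cH(x')<h_\cH(x)=\U_\cH(x)$; and if $x$ is short but $x'$ is long, then $\U_\cH(x)=v_\cH(x)<m(x)\le h_\cH(x')=\U_\cH(x')$.

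The remaining, and only genuinely delicate, case is when $x$ and $x'$ are both short, where I must prove $v_\cH(x)\ne v_\cH(x')$; I would argue by contradiction, assuming $v_\cH(x)=v_\cH(x')$. Since $v_\cH(x)\in Z(y_\cH(x))$ and $v_\cH(x')\in Z(y_\cH(x'))$ by Lemma~\ref{l-v}, and the sets $Z(\eta)$ are pairwise disjoint by Lemma~\ref{l-Z}, the assumed equality forces $y_\cH(x)=y_\cH(x')=:\eta$. Then Lemma~\ref{l-my} gives $m(x)\ne m(x')$, hence $m(x)>m(x')$ by (i); and reading off \eqref{e-v} modulo $\eta$ yields $v_\cH(x)\equiv m(x)-1$ and $v_\cH(x')\equiv m(x')-1 \pmod{\eta}$, so that $m(x)\equiv m(x')\pmod{\eta}$ and therefore $m(x)-m(x')\ge\eta$.

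To derive the contradiction I would invoke transversal-freeness a second time: choose $H'\in\cH$ with $H\cap H'=\emptyset$. The move leaves the piles in $H'$ untouched, so $x'_i=x_i\ge m(x)$ for all $i\in H'$, which means that from $x'-m(x')e$ one can make at least $m(x)-m(x')$ consecutive slow $H'$-moves; hence $h_\cH(x'-m(x')e)\ge m(x)-m(x')$ and, by \eqref{e-y}, $y_\cH(x')\ge m(x)-m(x')+1\ge\eta+1$, contradicting $y_\cH(x')=\eta$. I expect this last step to be the crux: it is a quantitative sharpening of the reasoning behind Lemmas~\ref{l6} and~\ref{l7}, and it is precisely where the combinatorial shape of the JM formula — the triangular staircase $Z(\eta)$ together with the period-$y_\cH$ behaviour of $v_\cH$ — is exactly what makes the argument close.
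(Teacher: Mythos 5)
Your proof is correct and follows essentially the same route as the paper's: a four-way case analysis on whether $x$ and $x'$ are long or short, with Lemma~\ref{l3T} (via transversal-freeness) handling the mixed cases, and the both-short case resolved through the disjointness of the sets $Z(\eta)$, Lemma~\ref{l-my}, and the bound $y_\cH(x')\geq m(x)-m(x')+1$ coming from a hyperedge disjoint from the one moved on. The only cosmetic difference is that you phrase the both-short case as an explicit contradiction with the mod-$\eta$ congruence written out, whereas the paper states the same divisibility observation directly.
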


\proof
To prove this statement, we consider four cases, depending on the types of the positions $x$ and $x'$, which can be long or short.

If both $x$ and $x'$ are long 
then $\U_\cH(x) = h_\cH(x)\neq h_\cH(x')=\U_\cH(x')$,
since every move strictly decreases the height by its definition.

If $x$ is long 
and $x'$ is short 
then we have $\U_\cH(x)=h_\cH(x) > m(x)\geq m(x') > v_\cH(x')=\U_\cH(x')$, proving the claim.
Here the first strict inequality is implied by the fact that every move strictly decreases the height and
by Lemma \ref{l3T} yielding $h_\cH(x) > h_\cH(x') \geq m(x)$.
The inequality $m(x)\geq m(x')$ holds for every move $x\to x'$. Finally $m(x')>v_\cH(x')$ is implied by Lemma \ref{l4}.

If $x$ is short 
and $x'$ is long 
then we have $\U_\cH(x')=h_\cH(x')\geq m(x)$ by Lemma \ref{l3T}, and
$m(x) > v_\cH(x)=\U_\cH(x)$ by Lemma \ref{l4}, which together imply the claim.

Finally, if both $x$ and $x'$ are short 
then we have $(m(x),y_\cH(x))\neq (m(x'),y_\cH(x'))$ by Lemma \ref{l-my}.
If $y_\cH(x)\neq y_\cH(x')$, then $v_\cH(x)\neq v_\cH(x')$ follows by Lemma \ref{l-v},
since in this case $Z(y_\cH(x))\cap Z(y_\cH(x'))=\emptyset$.
If $y_\cH(x)=y_\cH(x')$ then by \eqref{e-v} we have $v_\cH(x)=v_\cH(x')$ if and only if
$m(x')=m(x)-\alpha y_\cH(x)$ for some positive integer $\alpha$.
Thus, we must have $m(x')\leq m(x)-y_\cH(x)$.
This implies, by \eqref{e-y}, that $y_\cH(x')\geq y_\cH(x)+1$, which contradicts $y_\cH(x)=y_\cH(x')$, completing the proof of our statement.
To see the last implication, recall that $\cH$ is transversal-free.
Thus, if $x\to x'$ is an $H$-move, then there is a hyperedge $H'\in\cH$ such that $H\cap H'=\emptyset$.
For this hyperedge we have $x'_i=x_i\geq m(x)$ for all $i\in H'$, from which the claim follows by \eqref{e-y}.
\qed

The above lemma has the following consequence.

\begin{theorem}\label{t-addaset}
 Let $\cH$ be a JM hypergraph, $H,H'\in\cH$, $H \cap H'=\emptyset$, and $H \subseteq S \subseteq V\setminus H'$.
 Then  $\cH^+=\cH\cup\{S\}$ is also a JM hypergraph with $\G_{\cH^+}=\G_\cH$.
\end{theorem}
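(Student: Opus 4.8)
The plan is to reduce the theorem to two facts: (a) adding the set $S$ does not change the height, that is $h_{\cH^+}=h_\cH$ identically on $\ZZP^V$, and (b) $\cH^+$ is again transversal-free. Granting (a), all of the auxiliary quantities $y$, $v$, the long/short dichotomy, and $\U$ depend on the hypergraph only through $h$ (recall $y_\cH(x)=h_\cH(x-m(x)e)+1$, and that $m(x),y_\cH(x)$ determine $v_\cH(x)$ and which of \eqref{e-JM-I}, \eqref{e-JM-II} applies), so $\U_{\cH^+}=\U_\cH$. Granting (b), Lemma \ref{l8} applied to $\cH^+$ shows that $\U_{\cH^+}=\U_\cH$ satisfies property (A) for $NIM_{\cH^+}$. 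It then remains only to verify property (B), which is immediate from $\cH\subseteq\cH^+$ together with the fact that $\cH$ is JM: for any $x$ and any $z$ with $0\le z<\U_{\cH^+}(x)=\U_\cH(x)=\G_\cH(x)$, property (B) for $\G_\cH$ in $NIM_\cH$ yields a move $x\to x'$ of $NIM_\cH$ — hence also of $NIM_{\cH^+}$ — with $\G_\cH(x')=z$, so $\U_{\cH^+}(x')=\U_\cH(x')=z$. By the combinatorial characterization (A)--(B) of SG functions this gives $\G_{\cH^+}=\U_{\cH^+}=\U_\cH=\G_\cH$, so in particular $\cH^+$ is JM.

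For (b): the new edge $S$ satisfies $S\cap H'=\emptyset$ since $S\subseteq V\setminus H'$, so $S$ is not a transversal of $\cH^+$; and every $H''\in\cH$ already fails to be a transversal of $\cH$, hence a fortiori of the larger family $\cH^+$, because $\cH$ is transversal-free (Lemma \ref{l3}, as $\cH$ is JM). Thus $\cH^+$ is transversal-free.

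For (a): the inequality $h_{\cH^+}\ge h_\cH$ is immediate from $\cH\subseteq\cH^+$. For the reverse I would induct on the total number of stones $n(x)=\sum_{i\in V}x_i$, the base case $n(x)=0$ being trivial. Let $x\to x'$ be any move of $NIM_{\cH^+}$. If it is an $\cH$-move, then $h_{\cH^+}(x')=h_\cH(x')\le h_\cH(x)-1$ by the inductive hypothesis (valid since $n(x')<n(x)$) and the fact that every move strictly decreases the height (prepending the move to a height sequence at $x'$ produces a longer one at $x$). If it is the new $S$-move, then $x_i\ge1$ for all $i\in S\supseteq H$, so the slow $H$-move $x\to x^{s(H)}$ is available in $NIM_\cH$, and since $H\subseteq S$ we have $x'\le x-\chi(S)\le x-\chi(H)=x^{s(H)}$; hence by monotonicity of height (Lemma \ref{l1}, which holds for any hypergraph) and the inductive hypothesis applied to $x^{s(H)}$ (for which $n(x^{s(H)})=n(x)-|H|<n(x)$), we get $h_{\cH^+}(x')\le h_{\cH^+}(x^{s(H)})=h_\cH(x^{s(H)})\le h_\cH(x)-1$. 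In either case every move from $x$ in $NIM_{\cH^+}$ lands in a position of $h_{\cH^+}$-height at most $h_\cH(x)-1$, so $h_{\cH^+}(x)\le h_\cH(x)$, completing the induction.

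The only genuinely new ingredient is the height identity (a); part (b) and the (A)/(B) bookkeeping are routine applications of results already in the excerpt. The point to be careful about in (a) is the choice of induction parameter — the total number of stones works because it drops along every move and also drops from $x$ to $x^{s(H)}$ — together with the observation that an $S$-move is dominated by the slow $H$-move thanks to $H\subseteq S$. Note that the disjointness of $H$ and $H'$ is not used in (a) at all; it enters only through (b), to keep $S$ from becoming a transversal of $\cH^+$.
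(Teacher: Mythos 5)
Your proposal is correct and follows essentially the same route as the paper: the paper likewise notes that $H\subseteq S$ forces $h_{\cH^+}=h_\cH$ (hence $\U_{\cH^+}=\U_\cH$), gets property (B) for free from $\cH\subseteq\cH^+$ together with $\G_\cH=\U_\cH$, and gets property (A) from transversal-freeness of $\cH^+$ (via $S\cap H'=\emptyset$) and Lemma \ref{l8}. The only difference is that you justify the height identity by a stone-count induction, whereas the paper asserts it directly from $H\subseteq S$ (one can also see it by replacing each $S$-move in a height sequence by the corresponding $H$-move).
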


\proof
Let note first that $H\subseteq S$ implies that $h_{\cH^+} = h_{\cH}$ and consequently
$y_{\cH^+}=y_{\cH}$, $v_{\cH^+}=v_{\cH}$, and thus $\U_{\cH^+}=\U_{\cH}$.
Furthermore, any move in $NIM_{\cH}$ is still a move in $NIM_{\cH^+}$.
Therefore, for every $0\leq v<\U_{\cH^+}(x)$ there exists a move $x\to x'$ in $NIM_{\cH^+}$ such that $\U_{\cH^+}(x')=v$.
Finally, by $S\cap H'=\emptyset$ the hypergraph $\cH^+$ is also transversal-free, and thus by Lemma \ref{l8} we have $\U_{\cH^+}(x')\neq\U_{\cH^+}(x)$ for all moves $x\to x'$ of $NIM_{\cH^+}$.
\qed

\section{Sufficient Conditions}
\label{s3}

Let us first recall that properties (A) and (B) characterize the SG function of an impartial game. We can reformulate these now for $NIM_\cH$, and obtain the following necessary and sufficient condition for $\cH$ to be JM:

\begin{lemma}\label{l-ns-JM}
A hypergraph $\cH\subseteq 2^V$ is JM if and only if the following conditions hold:
\begin{itemize}
\item[\rm (A0)] $\cH$ is transversal-free.
\item[\rm (B1)] For every long 
position $x\in\ZZP^V$ and integer $m(x)\leq z < h_\cH(x)$ there exists a move $x\to x'$ such that $x'$ is long 
and $h_\cH(x')=z$.
\item[\rm (B2)] For every long 
position $x\in\ZZP^V$ and integer $0\leq z < m(x)$ there exists a move $x\to x'$ such that $x'$ is short 
and  $v_\cH(x')=z$.
\item[\rm (B3)] For every short 
position $x \in \ZZP^V$ and integer $0 \leq z < v_\cH(x)$ there exists a move $x \to x'$ such that $x'$ is short 
and $v_\cH(x')=z$.
\end{itemize}
\end{lemma}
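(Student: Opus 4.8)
The plan is to show the equivalence between ``$\cH$ is JM'' and the conjunction (A0)$\wedge$(B1)$\wedge$(B2)$\wedge$(B3) by unpacking the combinatorial characterization of the SG function, namely that $\U_\cH = \G_\cH$ iff $\U_\cH$ satisfies both (A) and (B). The forward direction is essentially immediate: if $\cH$ is JM then (A0) follows from Lemma \ref{l3} (a JM hypergraph is minimal transversal-free, hence in particular transversal-free), and then (B1), (B2), (B3) are just property (B) for $\U_\cH$ sorted by the type of the source position $x$ and the value of the target $z$ relative to $m(x)$. The only point requiring a word of care is that in (B1) and (B2) we start from a \emph{long} $x$, and we must argue that the promised move $x\to x'$ lands in a position $x'$ of the asserted type: for $z$ with $m(x)\le z<h_\cH(x)=\U_\cH(x)$ we have $m(x)\le z=\U_\cH(x')$, so $x'$ is long by Lemma \ref{l5}; for $z<m(x)$ we need $x'$ short, which is where transversal-freeness enters via Lemma \ref{l6}. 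Symmetrically, in (B3) with $x$ short and $z<v_\cH(x)=\U_\cH(x)$, Lemma \ref{l7} (again using (A0)) forces $x'$ to be short.

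For the converse, assume (A0), (B1), (B2), (B3); I would verify that $\U_\cH$ satisfies (A) and (B), which by the stated characterization gives $\U_\cH=\G_\cH$, i.e.\ $\cH$ is JM. Property (A) is handed to us directly by Lemma \ref{l8}, since (A0) says $\cH$ is transversal-free. For property (B), fix a position $x$ and an integer $0\le z<\U_\cH(x)$; we must exhibit a move $x\to x'$ with $\U_\cH(x')=z$. Split on whether $x$ is long or short. If $x$ is long, then $\U_\cH(x)=h_\cH(x)$, and we further split on whether $z\ge m(x)$ or $z<m(x)$: the first case is exactly (B1), which yields a long $x'$ with $h_\cH(x')=z=\U_\cH(x')$; the second is exactly (B2), which yields a short $x'$ with $v_\cH(x')=z=\U_\cH(x')$. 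If $x$ is short, then $\U_\cH(x)=v_\cH(x)$ and $z<v_\cH(x)$, so (B3) yields a short $x'$ with $v_\cH(x')=z=\U_\cH(x')$. In every case $\U_\cH(x')=z$, so (B) holds.

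The proof is mostly bookkeeping; the main thing to get right is the case analysis on long/short for both $x$ and $x'$, and the observation that the ``type'' of $x'$ guaranteed in each of (B1)--(B3) is precisely what makes $\U_\cH(x')$ equal the intended value ($h_\cH$ in the long case, $v_\cH$ in the short case). I anticipate no genuine obstacle here: all the heavy lifting about transversal-freeness forcing short targets was already isolated in Lemmas \ref{l6}, \ref{l7}, and \ref{l8}, and the partition of $\ZZP$ into ranges below and above $m(x)$ (together with Lemma \ref{l4} identifying short positions with $v_\cH<m$) makes the sorting of property (B) into (B1)--(B3) exhaustive and non-overlapping. The one subtlety worth stating explicitly in the write-up is that in the forward direction we do not need to \emph{assume} the target types in (B1)--(B3) are realizable by separate moves --- they are automatically realized because, given that $\U_\cH=\G_\cH$, any move realizing a given SG value $z$ automatically lands in a position whose type is dictated by $z$ versus $m(x)$, via Lemmas \ref{l5}, \ref{l6}, \ref{l7}.
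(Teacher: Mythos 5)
Your proposal is correct and follows essentially the same route as the paper: both directions are reduced to the combinatorial characterization (A)+(B) of the SG function, with Lemma \ref{l3} giving (A0), Lemma \ref{l8} giving (A) from (A0), and Lemmas \ref{l4}--\ref{l7} used to sort property (B) into (B1)--(B3) according to the long/short types of $x$ and $x'$. The paper's proof is just a terser statement of this same bookkeeping, so no further comparison is needed.
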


\proof
It is easy to see by Lemmas \ref{l5}, \ref{l6} and \ref{l7} that
conditions (B1), (B2), and (B3) are simple and straightforward reformulations
of condition (B) for the case of $NIM_\cH$ and the function $g = \U_\cH$ defined in \eqref{e-JM-I}-\eqref{e-JM-II}.

For the reverse direction, Lemma \ref{l8} shows that condition (A0) implies (A),
while Lemma \ref{l3} shows that if $\cH$ is JM, then it is also transversal-free. Furthermore, if $\cH$ is JM, that is if $\G_\cH=\Uparrow_\cH$, then  Lemmas \ref{l4} and \ref{l5} and the properties of an SG function imply (B1), (B2) and (B3).
\qed

\subsection{General Sufficient Conditions}

Let us next replace conditions (B2) and (B3) with somewhat simpler sufficient conditions.

\begin{lemma}\label{l-suff-23}
If a hypergraph $\cH\subseteq 2^V$ satisfies the following two conditions then it also satisfies {\rm (B2)} and {\rm (B3)}.
\begin{itemize}
\item[\rm (C2)] For every position $x\in\ZZP^V$ and integer $1\leq \eta < y_\cH(x)$ there exists a move $x\to x'$ such that $m(x')=m(x)$ and $y_\cH(x')=\eta$.
\item[\rm (C3)] For every position $x\in\ZZP^V$ and integers $0\leq \mu < m(x)$ and $m(x)-\mu +1 \leq \eta \leq y_\cH(x)$ there exists a move $x\to x'$ such that $m(x')=\mu$ and $y_\cH(x')=\eta$.
\end{itemize}
\end{lemma}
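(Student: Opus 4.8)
The plan is to show that (C2) implies (B2) and (B3) on the "$y$-level", and that (C3) supplies the rest of (B2) and (B3) once we translate the target SG values $v_\cH(x')=z$ into conditions on the pair $(m(x'),y_\cH(x'))$. The point is that by \eqref{e-v} and Lemma \ref{l-v}, the value $v_\cH(x')$ is determined by $(m(x'),y_\cH(x'))$, and conversely for a prescribed target $z$ with $\binom{\eta}{2}\leq z<\binom{\eta+1}{2}$ (so $z\in Z(\eta)$, this $\eta$ being unique by Lemma \ref{l-Z}), any short position $x'$ with $y_\cH(x')=\eta$ and $m(x')\equiv z-\binom{\eta}{2}+\binom{\eta}{2}+1\pmod\eta$ — more precisely, $m(x')$ chosen so that $(m(x')-\binom{\eta}{2}-1)\bmod\eta = z-\binom{\eta}{2}$ — will satisfy $v_\cH(x')=z$. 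So the whole argument is: given the target $z$, identify $\eta$ with $z\in Z(\eta)$, then find $x\to x'$ realizing the right $(m(x'),y_\cH(x'))$, and check that $x'$ comes out short.

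First I would verify (B2). Let $x$ be a long position and $0\leq z<m(x)$; write $z\in Z(\eta)$, so $1\leq \eta$ and $\binom{\eta}{2}\leq z<m(x)$, hence $\binom{\eta}{2}<m(x)$, which gives $\eta\leq y_\cH(x)$ because $x$ long means $m(x)\leq\binom{y_\cH(x)}{2}$ and $\binom{\cdot}{2}$ is monotone. Two cases. If $\eta=y_\cH(x)$: I want a move keeping $y_\cH$ equal to $\eta$ but shrinking $m$ down into the residue class forcing $v_\cH(x')=z$; I will produce this from (C3) with the right $\mu<m(x)$ and $\eta$, noting $m(x)-\mu+1\leq\eta$ can be arranged since we may take $\mu$ as small as $0$ and $\eta=y_\cH(x)$ is large (precisely, pick $\mu$ in $\{0,\dots,m(x)-1\}$ with $(\mu-\binom{\eta}{2}-1)\bmod\eta=z-\binom{\eta}{2}$, which exists because $z-\binom{\eta}{2}\in\{0,\dots,\eta-1\}$ and then shrink $\mu$ further by multiples of $\eta$ if needed to also satisfy $m(x)-\mu+1\leq\eta$, i.e. $\mu\geq m(x)-\eta+1$; I need to check this residue class meets $\{m(x)-\eta+1,\dots,m(x)-1\}$, which it does since that set has $\eta-1$ consecutive integers — wait, it has $\eta-1$ elements, so it may miss one residue; the fix is to also allow the $\mu$ coming from $Z$-analysis with a smaller $\eta$, handled in the other case, OR to observe $z<m(x)$ forces $z-\binom{\eta}2<m(x)-\binom{\eta}2$, and I should instead just apply (C3) with $\mu=m(x)-(\text{that residue offset})$-adjusted; I will write this carefully). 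If $\eta<y_\cH(x)$: apply (C2) to get $x\to x''$ with $m(x'')=m(x)$ and $y_\cH(x'')=\eta$, and then, if $m(x'')$ is not already in the right residue class, apply (C3) from $x''$ (now $y_\cH(x'')=\eta$ is the top, so $m(x'')-\mu+1\leq\eta$ is satisfiable for suitable $\mu$) to land on the correct $m(x')$. In every subcase $v_\cH(x')\leq\binom{y_\cH(x')}{2}\leq\binom{\eta}{2}\leq z<m(x)$ is false — rather, I must confirm $x'$ is short, i.e. $m(x')>\binom{y_\cH(x')}{2}$: since $m(x')$ lies in the class with $(m(x')-\binom\eta2-1)\bmod\eta=z-\binom\eta2\geq 0$ and we can always choose $m(x')\geq\binom\eta2+1$ (add a multiple of $\eta$ if necessary, which is harmless as $x$ long gives plenty of room below $m(x)$... ), $x'$ is short and $v_\cH(x')=z$ by \eqref{e-v}.

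Then (B3) is analogous but easier: $x$ is short, $0\leq z<v_\cH(x)$; write $z\in Z(\eta)$. Since $v_\cH(x)\in Z(y_\cH(x))$ and $z<v_\cH(x)$ we get $\eta\leq y_\cH(x)$, with $\eta<y_\cH(x)$ or ($\eta=y_\cH(x)$ and $z<v_\cH(x)$ forces $z-\binom\eta2<(m(x)-\binom\eta2-1)\bmod\eta$, hence a strictly smaller $m$ works). In the first case use (C2) then (C3); in the second use (C3) directly to decrease $m$ within the level $\eta=y_\cH(x)$. The short-ness of $x'$ is checked exactly as above.

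The main obstacle I anticipate is bookkeeping the interaction of the two constraints in (C3), namely $\mu<m(x)$ and $m(x)-\mu+1\leq\eta$, i.e. $\mu\in\{m(x)-\eta+1,\dots,m(x)-1\}$, against the divisibility requirement that $\mu$ sit in a prescribed residue class mod $\eta$. That window has only $\eta-1$ integers, so a single application of (C3) need not hit the desired class; the resolution is the two-step route via (C2) (which resets $y_\cH$ down to $\eta$ while preserving $m$, thereby making $\eta$ the top level so the window becomes $\{m(x)-\eta+1,\dots,m(x)-1\}$ relative to the new state but now one is free to also not move $m$ at all and instead argue directly) — making the case analysis on whether $m(x)$ already lies in the right residue class the crux of the write-up. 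Everything else is a direct appeal to \eqref{e-v}, Lemma \ref{l-v}, Lemma \ref{l-Z}, and Lemma \ref{l4}.
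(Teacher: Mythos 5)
There is a genuine gap, and it sits exactly at the point you flagged as the ``crux'': your fix for the residue-class obstruction is a \emph{two-step route} (``apply (C2) to get $x\to x''$, \dots then apply (C3) from $x''$''), but (B2) and (B3) demand a \emph{single} move $x\to x'$ in $NIM_\cH$. Composing an $H$-move $x\to x''$ with an $H'$-move $x''\to x'$ is in general not a legal move from $x$ (the set of strictly decreased piles is $H\cup H'$, which need not be a hyperedge), so the two-step construction proves nothing about (B2)/(B3). The alternative fixes you mention do not close the hole either: ``allow the $\mu$ coming from $Z$-analysis with a smaller $\eta$'' is impossible, since $\eta$ is uniquely determined by $z\in Z(\eta)$ (Lemma~\ref{l-Z} and Lemma~\ref{l-v} force $y_\cH(x')=\eta$), and the remaining branch is left as ``I will write this carefully.'' The single-move repair does exist, but it needs the observation you are missing: for fixed $\eta$, the admissible pairs are $\mu\in\{m(x)-\eta+1,\dots,m(x)-1\}$ from one application of (C3) \emph{plus} $\mu=m(x)$ from one application of (C2) (when $\eta<y_\cH(x)$), and together these $\eta$ consecutive values hit every residue class mod $\eta$; the only problematic situation is when the required class is $\mu\equiv m(x)$ and $\eta=y_\cH(x)$, and that cannot occur --- in (B2) it would force $m(x)>\binom{y_\cH(x)}{2}$, contradicting that $x$ is long, and in (B3) it would force $v_\cH(x)=z$, contradicting $z<v_\cH(x)$. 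One must also check shortness of $x'$, which follows since the chosen $\mu$ is $\geq z+1>\binom{\eta}{2}$ (using $z<m(x)$, resp.\ $z<v_\cH(x)<m(x)$).

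For comparison, the paper sidesteps all of this bookkeeping with a transfer argument: it takes the Jenkyns--Mayberry hypergraph $\hH=\{S\subseteq V\mid 1\leq|S|\leq n-1\}$, which is known to be JM, matches the pair $(m(x),y_\cH(x))$ by Lemma~\ref{l-H1H2-my}, reads off from Lemmas~\ref{l6} and~\ref{l7} that the needed target pairs $(m(x'),y_\cH(x'))$ all lie in the region $\{(\mu,\eta):0\leq\mu\leq m(x),\ m(x)-\mu+1\leq\eta\leq y_\cH(x)\}\setminus\{(m(x),y_\cH(x))\}$, and then observes that (C2) and (C3) realize every pair of that region by a single move. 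If you want a direct proof along your lines, you must replace the two-step chaining by the case analysis above.
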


\proof
Let us consider first another hypergraph $\hH=\{S\subseteq V\mid 1\leq |S|\leq n-1\}$. Then by the earlier cited result of Jenkyns and Mayberry \cite{JM80}
$\hH$ is a JM hypergraph. Note also that the games $NIM_\cH$ and $NIM_\hH$ are both played over the same set of positions $\ZZP^V$.

Let us now consider a position $x\in\ZZP^V$. By Lemma \ref{l-H1H2-my} there exists a position $\hx\in\ZZP^V$ such that $m(x)=m(\hx)$ and $y_\cH(x)=y_\hH(\hx)$.
Now, let us observe that since $\hH$ is JM, properties (B2) and (B3) are satisfied by Lemma \ref{l-ns-JM}.
Let us also note that if $\hx\to \hx'$ is a move in $NIM_\hH$ guaranteed to exist by properties
(B2) and (B3) then Lemmas \ref{l6} and \ref{l7} show that $(m(\hx'),y_\hH(\hx')) ~\in~ S(m(\hx),y_\hH(\hx))=S(m(x),y_\cH(x))$, where the set $S(\alpha,\beta)$ is defined as
\[
S(\alpha,\beta) ~=~ \left\{ (\mu,\eta) \left| \begin{array}{c}0\leq \mu \leq \alpha,\\ \alpha-\mu +1\leq \eta \leq \beta \end{array}\right.\right\} \setminus \{(\alpha,\beta)\}.
\]
Let us observe next that properties (C2) and (C3) imply that for every $(\mu,\eta)\in S(m(x),y_\cH(x))$ there exists a move $x\to x'$ in $NIM_\cH$ such that $m(x')=\mu$ and $y_\cH(x')=\eta$.
Thus, for every move $\hx\to\hx'$ in $NIM_\hH$ that validates properties (B2) and (B3) for $\hH$ we have a corresponding move $x\to x'$ in
$NIM_\cH$ such that $m(\hx')=m(x')$ and $y_\cH(x')=y_\hH(\hx')$, implying by \eqref{e-v}
that $v_\cH(x')=v_\hH(\hx')$ and that $x'$ and $\hx'$ are of the same type, that is, both are long or both are short by \eqref{e-JM-I} and \eqref{e-JM-II}.

Consequently, properties (C2) and (C3) do imply properties (B2) and (B3), as claimed.
\qed

\begin{corollary}\label{c1}
If a hypergraph $\cH$ satisfies properties {\rm (A0)}, {\rm (B1)}, {\rm (C2)}, and {\rm (C3)}, then it is JM.
\end{corollary}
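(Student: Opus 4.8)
The plan is to simply chain together the two preceding results, Lemma \ref{l-suff-23} and Lemma \ref{l-ns-JM}, since Corollary \ref{c1} is an immediate consequence of them. First I would invoke Lemma \ref{l-suff-23}: it asserts that any hypergraph satisfying (C2) and (C3) automatically satisfies (B2) and (B3). Since our $\cH$ is assumed to satisfy (C2) and (C3), we conclude that $\cH$ satisfies (B2) and (B3) as well.

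Next I would combine this with the two remaining hypotheses. By assumption $\cH$ satisfies (A0) and (B1), and by the previous paragraph it satisfies (B2) and (B3). Hence $\cH$ satisfies all four conditions (A0), (B1), (B2), (B3). By the ``if'' direction of Lemma \ref{l-ns-JM}, this means $\cH$ is JM, which is exactly the claim.

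There is essentially no obstacle here: all the work has already been done in Lemmas \ref{l8}, \ref{l-ns-JM} and \ref{l-suff-23}, and the corollary is purely a bookkeeping statement recording which combination of the simpler conditions (A0), (B1), (C2), (C3) suffices in practice (these are the conditions that will actually be verified for the concrete hypergraph families in Section \ref{s4}). The only thing to be careful about is that (C2)/(C3) are stated for \emph{every} position $x\in\ZZP^V$ and the full range of parameters, so no side conditions are lost when passing through Lemma \ref{l-suff-23}; this is already built into the statements as given, so the deduction goes through verbatim.

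\proof
By Lemma \ref{l-suff-23}, properties (C2) and (C3) imply properties (B2) and (B3). Hence, under the hypotheses of the corollary, $\cH$ satisfies all of (A0), (B1), (B2), and (B3). By Lemma \ref{l-ns-JM}, $\cH$ is therefore JM.
\qed
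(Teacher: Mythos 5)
Your proposal is correct and matches the paper's own proof exactly: the paper also deduces the corollary by combining Lemma \ref{l-suff-23} (to get (B2) and (B3) from (C2) and (C3)) with the sufficiency direction of Lemma \ref{l-ns-JM}. Nothing is missing.
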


\proof
The claim follows by Lemmas \ref{l-suff-23} and \ref{l-ns-JM}.
\qed

\subsection{Simplified Sufficient Conditions}

The conditions in Corollary \ref{c1} still involve the existence of moves with certain properties. In this section we further weaken those conditions, and replace them with easier to check properties of the hypergraph itself.

Given a hypergraph $\cH\subseteq 2^V$, we say that a sequence of hyperedges $H_0,H_1,\dots ,H_p$ of $\cH$ is a \emph{chain} if
\begin{equation}\label{e-chain}
H_{k+1}\cap H_k\neq\emptyset ~~~\text{ and }~~~ |H_{k+1}\setminus H_k|\leq 1 ~~ \mbox{for all } k=0,\dots ,p-1.
\end{equation}
For convenience, $p=0$ is possible, that is, a single set is considered to be a chain.

For a subfamily $\cF\subseteq \cH\subseteq 2^V$ we denote by $V(\cF)$ the set of vertices of $\cF$, that is, $V(\cF)=\bigcup_{F\in\cF}F$. In particular we have $V(\cH)=V$.

We shall consider the following properties:
\begin{itemize}
\item[\rm (A1)] $\cH$ is minimal transversal-free.
\item[\rm (D1)] For every pair of hyperedges $H,H'\in\cH$ there exists a chain \linebreak $H_0,H_1,\dots ,H_p$ in $\cH$ such that $H=H_0$, $H'=H_p$, and $H_i\subseteq H\cup H'$ for $i=1,\dots,p$.
\item[\rm (D2)] For every position $x\in\ZZP$ with $m(x)>0$ there exists a hyperedge $H\in\cH$ such that
$h_\cH(x^{s(H)})= h_\cH(x)-1$ and $m(x^{s(H)})=m(x)-1$.
\end{itemize}

Our main claim in this section is that the above properties are sufficient for a hypergraph to be JM.

\begin{theorem}\label{t2}
If a hypergraph $\cH$ satisfies properties {\rm (A1)}, {\rm (D1)}, and {\rm (D2)}, then it is JM.
\end{theorem}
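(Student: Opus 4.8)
The plan is to verify the three remaining conditions of Corollary \ref{c1}, namely (B1), (C2), and (C3), given that $\cH$ satisfies (A1), (D1), (D2); note that (A1) is literally (A0) strengthened, so condition (A0) is immediate. The strategy throughout is to produce, from a starting position $x$, an explicit hyperedge (or short chain of hyperedges) whose slow move decreases the relevant parameters in a controlled way, using (D2) to drop $m(x)$ and $h_\cH(x)$ together, and (D1) to ``interpolate'' between hyperedges so that intermediate values are also realized.

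For (B1), start from a long position $x$ and a target $m(x)\le z<h_\cH(x)$. The point is to find an $H$-move that keeps the position long and lands the height exactly at $z$. I would pick a height move $x\to x'$ (so $h_\cH(x')=h_\cH(x)-1$); by Lemma \ref{l2} any intermediate height value in $[h_\cH(x'),h_\cH(x^{s(H)})]$ is attainable by an $H$-move, and since $h_\cH(x^{s(H)})\ge z$ we can realize $h_\cH(x'')=z$ for a suitable $x''$ with $x'\le x''\le x^{s(H)}$. The remaining worry is that $x''$ be long; here I would invoke Lemma \ref{l3T} (which needs transversal-freeness, supplied by (A1)) to get $h_\cH(x'')\ge m(x)\ge m(x'')$, and combine with $z\ge m(x)$ and Lemma \ref{l4} to conclude $x''$ is long. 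So (B1) follows with a little care about which height move to start from.

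For (C2), given $x$ and $1\le \eta<y_\cH(x)$, I want a move $x\to x'$ with $m(x')=m(x)$ and $y_\cH(x')=\eta$. Passing to $\bar x = x-m(x)e$, we have $h_\cH(\bar x)=y_\cH(x)-1\ge \eta$, so there is a height sequence of length $\ge \eta$ from $\bar x$; its first move corresponds to some $H\in\cH$ with $\bar x^{s(H)}$ still having $m=0$ if we are careful to choose $H$ disjoint from the support structure, or more robustly, iterate using (D1) to walk along a chain within $H\cup H'$ realizing all intermediate heights — this is where (D1) is essential, since it lets us descend $h_\cH(\bar x)$ one unit at a time while keeping the minimum at $0$. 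The resulting position $x'$ (lifted back by adding $m(x)e$) has $m(x')=m(x)$ and the prescribed $y_\cH(x')=\eta$. For (C3), given $0\le \mu<m(x)$ and $m(x)-\mu+1\le \eta\le y_\cH(x)$, I would first apply (D2) repeatedly $m(x)-\mu$ times to obtain a position $x^{(1)}$ with $m(x^{(1)})=\mu$ and, by Lemma \ref{l2-y}, $y_\cH(x^{(1)})\ge y_\cH(x)\ge \eta$; then apply the (C2)-type argument (keeping $m$ fixed at $\mu$) to bring $y_\cH$ down exactly to $\eta$. Chaining these two phases and checking that the net effect is a single legal move — or replacing the composite by one slow move whose removed coordinate set is a hyperedge, again using chains from (D1) — yields (C3).

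The main obstacle I anticipate is the bookkeeping in (C2) and (C3): a priori the natural constructions produce a \emph{sequence} of moves, whereas (C2), (C3) demand a single move $x\to x'$. The real content of (D1) is that any such descent can be routed through hyperedges all contained in $H\cup H'$, so that the removed coordinates stay within one hyperedge's worth of change and the whole descent collapses to a single admissible $H$-move to the target; making this collapse precise — showing the intermediate chain certifies the height drop while the support of the net change is exactly one hyperedge — is the delicate step. The minimum-coordinate tracking (ensuring $m$ behaves as claimed, distinguishing when the decremented pile is or is not the unique minimum) is the other place where errors creep in, and I would handle it by always working with $\bar x=x-m(x)e$ and treating the $m$-shift separately via (D2) and Lemma \ref{l2-y}.
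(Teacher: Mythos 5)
Your top-level plan (reduce to Corollary \ref{c1} and verify (B1), (C2), (C3)) coincides with the paper's, but the execution has genuine gaps at exactly the points you flag as ``delicate.'' For (B1): starting from one hyperedge $H$ of a height move, Lemma \ref{l2} only yields heights in the interval between $h$ of the fully-zeroed-on-$H$ position and $h_\cH(x)-1$; the bottom of that interval can be far above $m(x)$, so a single $H$ does not cover $[m(x),h_\cH(x)-1]$. The missing idea is the paper's construction in Lemma \ref{l11}: use (D2) to pick the initial height move through a pile $j$ with $x_j=m(x)$, use minimality in (A1) to get a hyperedge $H'$ that is a transversal of $\cH_{V\setminus\{j\}}$ (so that after an $H'$-move only moves through pile $j$ remain, forcing the bottom height down to at most $m(x)$), and use (D1) to chain $H=H_0,\dots,H_p=H'$; for \emph{each} $H_k$ one takes a family of single $H_k$-moves whose height values form an interval (Lemma \ref{l2}), and the chain condition $|H_{k+1}\setminus H_k|\le 1$ together with Corollary \ref{c0} makes consecutive intervals overlap. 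Your longness check is also not sound: $h_\cH(x'')\ge m(x'')$ holds for short positions too, so it cannot certify longness via Lemma \ref{l4}; the paper instead verifies longness structurally (the constructed positions either have a zero coordinate, hence $m=0$, or satisfy $m<m(x)$ while $y_\cH$ does not decrease, by Lemma \ref{l2-y}).

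For (C2) and (C3) the gap is more serious: you correctly observe that the natural constructions give a \emph{sequence} of moves while a single move is required, but your proposed remedy --- collapsing the descent into one move because (D1) routes it inside $H\cup H'$ --- does not work. A move must decrease exactly the piles of one hyperedge, and the union of the hyperedges used along a descent (or along repeated applications of (D2) in your (C3) phase) need not be a hyperedge, nor contained in one; (D1) gives no such collapsing mechanism. The paper's Lemmas \ref{l12} and \ref{l13} never compose moves: for each chain element $H_k$ they define single moves from $x$ that push the $H_k$-coordinates down to the floor $m(x)$ (for (C2)) or to the target $\mu$ (for (C3)), show the resulting $y_\cH$-values form overlapping intervals, and use (A1)-minimality to choose the terminal hyperedge $H_p$ as a transversal of the relevant induced subhypergraph so the last interval reaches $y_\cH=1$; note also that (D2) is not needed there at all, and your ``apply (D2) repeatedly, then merge'' plan for (C3) cannot be repaired without essentially rediscovering this interval-covering construction.
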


Let us first remark that condition (D2) may be necessary for a hypergraph to be JM, but we cannot prove this.

To prove Theorem \ref{t2}, we show several consequences of the above properties.
In particular, we show that properties (A1), (D1), and (D2) imply properties (A0), (B1), (C2) and (C3), and thus,
Theorem \ref{t2} follows by Corollary \ref{c1}.

Clearly (A1) implies (A0).  The other three implications we will show separately.

\begin{lemma}\label{l11}
If a hypergraph $\cH\subseteq 2^V$ satisfies properties {\rm (A1)}, {\rm (D1)}, and {\rm (D2)},
then it also satisfies property (B1), that is, for all long positions $x \in \ZZP^V$ 
and for all integer values $m(x)\leq z < h_\cH(x)$ there exists a move $x \to y$ such that $y$ is long 
and $h_\cH(y)=z$.
\end{lemma}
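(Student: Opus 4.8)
The plan is to prove property (B1) by showing that from any long position $x$ we can reach, by a single move, a long position $y$ whose height takes any prescribed value $z$ in the range $m(x)\leq z<h_\cH(x)$. The natural mechanism is Lemma \ref{l2}: an $H$-move $x\to x'$ can be ``tuned'' so that the resulting height $h_\cH(x'')$ hits any integer between $h_\cH(x')$ and $h_\cH(x^{s(H)})$, provided $x'\leq x''\leq x^{s(H)}$. So first I would pick a hyperedge $H$ that starts a height sequence of $x$; then $h_\cH(x^{s(H)})=h_\cH(x)-1$, so the slow $H$-move already realizes the top of the interval, namely $z=h_\cH(x)-1$. The real work is to push the reachable interval of heights down as far as $m(x)$ while keeping the target position long.

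Second, I would control the type of the target position. A position $x''$ with $x'\leq x''\leq x^{s(H)}$ and $\{j\mid x''_j<x_j\}=H$ has $m(x'')\leq m(x)$; the danger is that decreasing the minimum pile too much makes $x''$ short. Here property (D2) is the key tool: it guarantees a hyperedge $H$ with $h_\cH(x^{s(H)})=h_\cH(x)-1$ \emph{and} $m(x^{s(H)})=m(x)-1$, and by Lemma \ref{l2-y} this forces $y_\cH(x^{s(H)})\geq y_\cH(x)$, so $x^{s(H)}$ stays long (its $m$ dropped by only one, while its $y$ did not decrease, so $m\leq\binom{y}{2}$ is preserved). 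More generally, as long as a candidate $x''$ satisfies $m(x'')\geq m(x)-1$ and its height is at least $m(x)-1$, Lemma \ref{l2-y}-type reasoning or a direct appeal to Lemma \ref{l1} keeps $y_\cH(x'')$ large enough that $x''$ is long. I would iterate: having reached a long position with smaller $m$, reapply (D2) to that position, each time lowering $m$ by one and lowering the height to the next lower value, stopping once the height reaches $m(x)$. Property (A1) (minimal transversal-freeness) and (D1) (chain connectivity within $H\cup H'$) enter to guarantee that the height can actually be decremented one unit at a time all the way down to $m(x)$ without getting ``stuck''—that is, that the height function of $\cH$ restricted to the relevant support behaves like the height in the full $NIM$ game where (B1) is known to hold by \cite{JM80}.

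Concretely, the steps in order would be: (1) fix a long $x$ and a target $z$ with $m(x)\leq z<h_\cH(x)$; (2) use (D2) repeatedly to build a sequence $x=x^{(0)},x^{(1)},\dots,x^{(t)}$ with $x^{(k+1)}=(x^{(k)})^{s(H_k)}$, $h_\cH(x^{(k+1)})=h_\cH(x^{(k)})-1$, $m(x^{(k+1)})=m(x^{(k)})-1$, continuing while $h_\cH(x^{(k)})>z$; by Lemma \ref{l2-y} every $x^{(k)}$ with $m(x^{(k)})\geq m(x)-?$ remains long, and crucially since the height never drops below $z\geq m(x)$ before we stop, we have $m(x^{(k)})\leq h_\cH(x^{(k)})$ throughout, keeping them long; (3) when the chain of slow moves overshoots $z$ in one last step, use Lemma \ref{l2} to interpolate: the last move $x^{(t-1)}\to y$ can be adjusted so that $h_\cH(y)=z$ exactly, with $x^{(t)}\leq y\leq (x^{(t-1)})^{s(H_{t-1})}$, and this $y$ is long because $m(y)\geq m(x^{(t)})$ and $h_\cH(y)=z\geq m(x)\geq m(y)$, so $m(y)\leq\binom{y_\cH(y)}{2}$. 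The main obstacle I anticipate is step (2)/(3): verifying that the position obtained by Lemma \ref{l2}'s interpolation is genuinely long rather than short. The subtlety is that $y$ lies strictly between the slow move and $x^{(t)}$, so its $m$ might be as small as $m(x^{(t-1)})-1$ while its $y_\cH$ value is only bounded below via Lemma \ref{l1}; one must check that the inequality $m(y)\leq\binom{y_\cH(y)}{2}$ survives, which should follow because $h_\cH(y)=z\geq m(x)\geq m(y)+(\text{number of decrements})$, but pinning down this bookkeeping—and ensuring (D1) really does let the height descend by unit steps to $m(x)$ and no lower obstruction appears—is where the care is needed.
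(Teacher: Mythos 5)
There is a genuine gap, and it is structural rather than a matter of bookkeeping. Property (B1) demands a \emph{single} move $x\to y$ with $y$ long and $h_\cH(y)=z$, because it instantiates condition (B) of the SG characterization. Your construction instead builds a multi-step sequence $x=x^{(0)}\to x^{(1)}\to\cdots\to x^{(t)}$ of slow moves (iterating (D2)) and then adjusts only the last step; but for $k\geq 2$ the position $x^{(k)}$ differs from $x$ on the union $H_0\cup\cdots\cup H_{k-1}$, which is in general not a hyperedge, so none of these positions (nor your interpolated $y$) is reachable from $x$ in one move. Hence the argument does not establish (B1) at all. There are secondary problems as well: iterating (D2) drops $m$ by one per step and must halt once the minimum pile reaches $0$, i.e.\ after at most $m(x)$ steps, whereas you may need $h_\cH(x)-z>m(x)$ unit decrements of the height; your appeal to (A1) and (D1) to ``avoid getting stuck'' names no concrete mechanism. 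Also, by your own definition $x^{(t)}=(x^{(t-1)})^{s(H_{t-1})}$, so the interpolation range $x^{(t)}\leq y\leq (x^{(t-1)})^{s(H_{t-1})}$ is degenerate, and the inference ``$h_\cH(y)=z\geq m(y)$ hence $m(y)\leq\binom{y_\cH(y)}{2}$'' is a non sequitur (being long is a comparison of $m$ with $\binom{y_\cH}{2}$, not with the height).

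The paper's proof stays within single moves throughout, and this is where all three hypotheses are actually used. From (D2) one gets a hyperedge $H$ containing a pile $j$ with $x_j=m(x)$ such that the slow $H$-move is a height move; (A1) supplies a hyperedge $H'$ that is a transversal of $\cH_{V\setminus\{j\}}$; (D1) supplies a chain $H=H_0,\dots,H_p=H'$ inside $H\cup H'$. For each $k$ one considers $H_k$-moves from $x$ that zero out $H_{k-1}\cap H_k$, respectively all of $H_k$; each such target is one move from $x$, Lemma \ref{l2} makes the attained heights an interval for each $k$, Corollary \ref{c0} glues consecutive intervals, and the position with $H'$ zeroed has height at most $m(x)$ because every remaining move must use pile $j$, which holds only $m(x)$ stones. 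Longness is automatic for targets containing a zero pile ($m=0$), and for the top segment it follows from Lemma \ref{l2-y} exactly as you used it. If you want to salvage your write-up, you need to replace the iterated-(D2) descent by this chain of one-move targets; as it stands the proposal proves a statement about sequences of moves, which is not (B1).
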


\proof
Let us first consider a long position $x\in\ZZP^V$ with $m(x)>0$. By property (D2) there exists a $j\in H\in\cH$ such that $h_\cH(x^{s(H)})=h_\cH(x)-1$ and $x_j=m(x)$. By property (A1) the subhypergraph $\cH_{V\setminus\{j\}}$ contains a transversal $H'\in\cH$.
By property (D1) we have then a chain $H_0,H_1,\dots ,H_p$ such that $H_0=H$, $H_p=H'$ and $|H_{k+1}\setminus H_k|\leq 1$ for all $k=0,1,\dots ,p-1$. Let us then define positions $x^{\ga,k}$ and $x^{\go,k}$ for $k=1,\dots ,p$ by
\[
x^{\ga,k}_i~=~\begin{cases} 0&\text{ if } i\in H_{k-1}\cap H_k\\x_i-1&\text{ if } i\in H_k\setminus H_{k-1}\\x_i &\text{ if } i\not\in H_k,\end{cases}
~~~
x^{\go,k}_i~=~\begin{cases} 0&\text{ if } i\in H_k\\x_i &\text{ if } i\not\in H_k.\end{cases}
\]
Set $x^{\ga,0}=x^{s(H)}$, and define $x^{\mu,0}$ and $x^{\go,0}$ by
\[
x^{\mu,0}_i~=~\begin{cases} 0&\text{ if } i=j\\x_i-1&\text{ if } i\in H_0\setminus\{j\}\\
x_i &\text{ if } i\not\in H_0,\end{cases}
~~~
x^{\go,0}_i~=~\begin{cases} 0&\text{ if } i\in H_0\\x_i &\text{ if } i\not\in H_k.\end{cases}
\]

We claim first that all positions $x^{\ga,k}\geq y\geq x^{\go,k}$ are long 
and are reachable from $x$ by an $H_k$-move $x \to y$, for $k=1,\dots ,p$.
This is because $m(y)=0$ for all these positions since they have a zero
(namely $y_i=0$ for all $i\in  H_{k-1}\cap H_k$, which is not an empty set by \eqref{e-chain}).
The analogous claim holds for positions $x^{\mu,0}\geq y\geq x^{\go,0}$ since $y_j=0$ for all these positions.
We also claim that positions $x^{\ga,0}\geq y\geq x^{\mu,0}$ are also long 
whenever  $x$ is long 
(and they are reachable from $x$ by an $H_0$-move $x \to y$).
The last claim is true because we have $m(y)\leq m(x^{\ga,0})=m(x)-1$, and $y_\cH(y)\geq y_\cH(x^{\ga,0})\geq y_\cH(x)$ by Lemma \ref{l2-y},
and thus, the fact that $x$ is long 
implies $m(y) < m(x)\leq \binom{y_\cH(x)}{2}\leq \binom{y_\cH(y)}{2}$.

Let us observe next that the sets of height values for these ranges of positions form intervals by Lemma \ref{l2}. Namely, we have
\[
\begin{array}{cll}
\left\{h_\cH(y)\mid x^{\ga,0}\geq y\geq x^{\mu,0} \right\} &=& \left[ h_\cH(x^{\mu,0}), h_\cH(x)-1 \right], \\*[2mm]
\left\{h_\cH(y)\mid x^{\mu,0}\geq y\geq x^{\go,0} \right\} &=& \left[ h_\cH(x^{\go,0}), h_\cH(x^{\mu,0}) \right], \text{ and } \\*[2mm]
\left\{h_\cH(y)\mid x^{\ga,k}\geq y\geq x^{\go,k} \right\} &=& \left[ h_\cH(x^{\go,k}), h_\cH(x^{\ga,k}) \right] \text{ for } k=1,\dots ,p.
\end{array}
\]
We claim that these intervals cover all values in the interval $[m(x),h_\cH(x)-1]$,
as stated in the lemma. To see this claim, we show the following inequalities:
\[
\begin{array}{lll}
h_\cH(x^{\ga,k}) &\geq& h_\cH(x^{\go,k-1})-1 ~~\text{ for } 1\leq k \leq p, \text{ and }\\
h_\cH(x^{\go,p}) &\leq& m(x).
\end{array}
\]
The first group of inequalities follow by Corollary \ref{c0}. This is because for $i\not\in H_k\setminus H_{k-1}$ we have $x^{\ga,k}_i\geq x^{\go,k-1}_i$ and for the unique index $i\in H_k\setminus H_{k-1}$ we have $x^{\ga,k}_i=x_i-1$ and $x^{\go,k-1}_i=x_i$.

For the second inequality  observe that by our choice the set $H'=H_p$ intersects every hyperedge that does not contain $j\in V$.
Thus, the only possible moves from $x^{\go,p}$ are $H$-moves for hyperedges
$H\in\cH$ that contain element $j$. Since $x_j=m(x)$, the total number of such moves is limited by $m(x)$, as stated.

\medskip

Let us next consider a long position $x\in\ZZP^V$ such that $m(x)=0$. Consider $W=\{i\in V\mid x_i>0\}$ and the induced subhypergraph $\cH_W$. By the definition of the height, there exists a hyperedge $H\in\cH_W$ and an $H$-move $x\to x'$ such that $h_\cH(x')=h_\cH(x)-1$.
By property (A1) there exists a hyperedge $H'\in\cH_W$ that intersects all other edges of $\cH_W$.
By property (D1) we have again a chain $H_0,H_1,\dots ,H_p$ such that $H_0=H$, $H_p=H'$ and $|H_{k+1}\setminus H_k|\leq 1$ for all $k=0,1,\dots ,p-1$. Similarly to the above construction, we have a series of $H_k$-moves $k=0,\dots,p$ such that the range of $h_\cH$ values includes all integers $0\leq z < h_\cH(x)$.
\qed

\begin{lemma}\label{l12}
If a hypergraph $\cH\subseteq 2^V$ satisfies properties {\rm (A1)} and {\rm (D1)}, then it also satisfies property {\rm (C2)},
that is, for every position $x\in\ZZP^V$ and integer $1\leq \eta < y_\cH(x)$ there exists a move $x\to x'$ such that $m(x')=m(x)$ and $y_\cH(x')=\eta$.
\end{lemma}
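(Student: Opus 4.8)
The plan is to fix a position $x\in\ZZP^V$ and a target value $1\leq\eta<y_\cH(x)$, and to construct a move $x\to x'$ achieving $m(x')=m(x)$ and $y_\cH(x')=\eta$. The first observation is that $m(x')=m(x)$ forces the $H$-move we choose to avoid decreasing any pile currently equal to $m(x)$; equivalently, writing $M=\{i\in V\mid x_i=m(x)\}$, we need a hyperedge $H\in\cH$ with $H\cap M=\emptyset$, i.e.\ $H\subseteq V\setminus M$. By property (A1) the induced subhypergraph $\cH_{V\setminus M}$ (which is nonempty, since $\cH$ is transversal-free and $M$ cannot be a transversal) contains a transversal $H^*$ of $\cH_{V\setminus M}$. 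Since $y_\cH(x)\geq\eta+1\geq 2$, the set $x-m(x)e$ admits at least one move, and by taking a minimal hyperedge $H$ in a height sequence for $x-m(x)e$ we get a hyperedge $H_0\subseteq V\setminus M$ from which, after reducing to its slow move, the height drops by exactly one; iterating this along a height sequence realizes $y_\cH$ values all the way down to $1$ using only hyperedges inside $V\setminus M$.

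The next step is to interpolate continuously between the $y_\cH$-value $1$ (reached by driving $x-m(x)e$ to an all-zero position via slow moves inside $V\setminus M$, using only the last hyperedge $H^*$ whose disjointness behavior is controlled by (A1)) and the $y_\cH$-value $y_\cH(x)-1$ (reached by a single slow move of $H_0$). The key tool is Lemma \ref{l2}: along a chain of slow moves, the height takes every integer value in the interval between the endpoints, and each intermediate position is reachable from $x$ by a single $H_k$-move for the appropriate hyperedge $H_k$ in the chain. To splice these single-hyperedge moves together into one coherent family reachable from $x$, I would use property (D1): take a chain $H_0,H_1,\dots,H_p$ from $H_0$ to $H_p=H^*$ with $H_i\subseteq H_0\cup H^*\subseteq V\setminus M$ for all $i$, so that every hyperedge used in the construction avoids $M$ and hence every resulting position $x'$ has $m(x')=m(x)$. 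As in the proof of Lemma \ref{l11}, define boundary positions $x^{\alpha,k}$ and $x^{\omega,k}$ for each link of the chain so that consecutive height ranges overlap (the overlap inequalities $h_\cH(x^{\alpha,k})\geq h_\cH(x^{\omega,k-1})-1$ follow from Corollary \ref{c0}, exactly because $|H_k\setminus H_{k-1}|\leq 1$), and the union of ranges covers $[1,y_\cH(x)-1]$ once we translate "height of $x'-m(x')e$ plus one" into $y_\cH$.

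The main obstacle, and the place requiring the most care, is verifying that the bottom of the combined range actually reaches $y_\cH=1$ (equivalently that the final position $x^{\omega,p}$, after subtracting $m(x)e$, has height $0$). This is where the transversality of $H^*=H_p$ within $\cH_{V\setminus M}$ is essential: once we zero out all piles of $x-m(x)e$ on $H^*$, every remaining hyperedge of $\cH$ either meets $M$ (so it is unavailable at level $m(x')=m(x)$, since reducing it would lower the minimum) or lies in $V\setminus M$ and hence meets $H^*$ (so it has a zero pile in $x'-m(x')e$) — in either case $h_\cH(x'-m(x')e)=0$, giving $y_\cH(x')=1$. I would also need to double-check the top end: a single slow $H_0$-move gives $y_\cH(x^{s(H_0)})\geq y_\cH(x)-1$ by Lemma \ref{l2-y} (note $H_0\subseteq V\setminus M$ guarantees $m(x^{s(H_0)})=m(x)$ only if $|M\cap H_0|=0$, which we have arranged), and in fact we can choose the height-sequence hyperedge so that equality $y_\cH(x^{s(H_0)})=y_\cH(x)-1$ holds, so no gap opens at the top. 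Assembling these pieces — chain interpolation via (D1), height-interval covering via Lemma \ref{l2} and Corollary \ref{c0}, and the bottom-endpoint argument via (A1) — yields a move realizing each prescribed $\eta$, which is property (C2).
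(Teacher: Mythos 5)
Your plan is correct and follows essentially the same route as the paper's proof: restrict to $W=V\setminus M$, take a height-move hyperedge $H_0\subseteq W$ for $x-m(x)e$ and a transversal hyperedge of $\cH_W$ from (A1), connect them by a (D1)-chain inside $W$, and cover $[1,y_\cH(x)-1]$ with overlapping $y_\cH$-intervals via Lemma \ref{l2} and Corollary \ref{c0}, with the bottom endpoint $y_\cH=1$ forced by the transversality argument. The only blemishes are cosmetic: the nonemptiness of $\cH_W$ is justified by $y_\cH(x)\geq 2$ (as you also say), not by transversal-freeness of $\cH$, and the top endpoint follows directly from choosing $H_0$ as a height move of $x-m(x)e$ rather than from Lemma \ref{l2-y}, whose hypotheses ($m$ dropping by one) do not apply here.
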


\proof
Let us fix a position $x\in\ZZP^V$ and define $W=\{i\in V\mid x_i>m(x)\}$.
Since $y_\cH(x)+1 = h_\cH(x-m(x)e)$ is the height, there exists a hyperedge $H\in\cH_W$ such that $y_\cH(x^{s(H)})=y_\cH(x)-1$.
By property (A1) there exists a hyperedge $H'\in\cH_W$ that intersects all hyperedges of $\cH_W$. Then by property (D1) there exists a chain $H_0,\dots ,H_p$ in $\cH_W$ such that $H_0=H$ and $H_p=H'$.

Then let $x^{\ga,0}=x^{s(H_0)}$, and define $x^{\go,0}$ by
\[
x^{\go,0}_i=\begin{cases} m(x) & \text{ for } i\in H_0\\ x_i &\text{ otherwise,}\end{cases}
\]
and positions $x^{\ga,k}$ and $x^{\go,k}$ for $k=1,\dots ,p$ by
\[
x^{\ga,k}_i~=~\begin{cases} m(x)&\text{ if } i\in H_{k-1}\cap H_k\\x_i-1&\text{ if } i\in H_k\setminus H_{k-1}\\x_i &\text{ if } i\not\in H_k,\end{cases}
~~~\text{ and }~~~
x^{\go,k}_i~=~\begin{cases} m(x)&\text{ if } i\in H_k\\
x_i &\text{ if } i\not\in H_k.\end{cases}
\]

Let us observe next that the sets of $y_\cH(x')$ values for the ranges $x^{\ga,k}\geq x' \geq x^{\go,k}$, $k=0,1,\dots ,p$ form intervals by Lemma \ref{l2}. Namely, we have
\[
\begin{array}{c}
\left\{y_{\cH}(x')\mid x^{\ga,k}\geq x' \geq x^{\go,k} \right\} = \left[ y_\cH(x^{\go,k}), y_\cH(x^{\ga,k}) \right] \text{ for } k=0,1,\dots ,p.
\end{array}
\]
We claim that these intervals cover all values in the interval $[1,y_\cH(x)-1]$.
To see this claim, we show the following relations:
\[
\begin{array}{lll}
y_\cH(x^{\go,p}) &=&1, \\
y_\cH(x^{\ga,k}) &\geq& y_\cH(x^{\go,k-1})-1 ~~~\text{ for } 1\leq k \leq p, \textrm{ and }\\
y_\cH(x^{\ga,0}) &=&y_\cH(x)-1.
\end{array}
\]
The second group of inequalities follow by Corollary \ref{c0},
since these $y_\cH$-values are essentially the height values by definition \eqref{e-y}.
Indeed, for $i\not\in H_k\setminus H_{k-1}$ we have $x^{\ga,k}_i\geq x^{\go,k-1}_i$ and for the unique index $i\in H_k\setminus H_{k-1}$ we have $x^{\ga,k}_i=x_i-1$ and $x^{\go,k-1}_i=x_i$, and thus Corollary \ref{c0} is applicable.

The first equality is true, since $H_p=H'$ intersects all hyperedges of $\cH_W$, and thus, we have $h_\cH(x^{\go,p}-m(x)e)=0$.
The last equality $y_\cH(x^{\ga,0})=y_\cH(x^{s(H)})=y_\cH(x)-1$ follows by our choice of the set $H$.
\qed

\begin{lemma}\label{l13}
If a hypergraph $\cH\subseteq 2^V$ satisfies properties {\rm (A1)} and {\rm (D1)}, then it also satisfies property {\rm (C3)},
that is, for every position $x\in\ZZP^V$ and integers $0\leq \mu < m(x)$ and
$m(x)-\mu\leq \eta \leq y_\cH(x)$ there exists a move $x\to x'$ such that $m(x')=\mu$ and $y_\cH(x')=\eta$.
\end{lemma}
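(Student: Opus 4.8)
The plan is to follow the strategy of the proofs of Lemmas \ref{l11} and \ref{l12}, after one preliminary simplification. I would first note that the statement is strongly translation invariant: adding $-\mu e$ to every position is a move-preserving bijection onto $\{z\mid z\ge\mu e\}$ which decreases $m$ by $\mu$ and leaves $y_\cH$ unchanged; hence a move $x\to x'$ with $m(x')=\mu$ and $y_\cH(x')=\eta$ exists if and only if a move $(x-\mu e)\to x''$ with $m(x'')=0$ and $y_\cH(x'')=\eta$ exists. So I may assume $\mu=0$. Writing $m=m(x)>0$ and using $y_\cH(x'')=h_\cH(x'')+1$ for any $x''$ with $m(x'')=0$, together with $y_\cH(x)-1=h_\cH(x-me)$, the claim becomes: \emph{for every integer $z$ with $m\le z\le h_\cH(x-me)$ there is a move $x\to x'$ with $m(x')=0$ and $h_\cH(x')=z$} (and if this interval is empty, which is precisely when $\eta$ lies outside the admissible range, there is nothing to prove; note the endpoint $\eta=m(x)-\mu$ cannot occur, so effectively the lower bound on $\eta$ is $m(x)-\mu+1$).

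For the construction I would fix a coordinate $j$ with $x_j=m$. Transversal-freeness of $\cH$ forces $\cH_{V\setminus\{j\}}\ne\emptyset$ (if every hyperedge contained $j$, then every hyperedge would be a transversal), and the minimality clause of (A1) then supplies a transversal $H^*\in\cH$ of $\cH_{V\setminus\{j\}}$. I would pick any hyperedge $H_0\ni j$ and, by (D1), a chain $H_0,H_1,\dots,H_p=H^*$ with $H_k\subseteq H_0\cup H^*$ for all $k$; here $p\ge1$ since $j\in H_0\setminus H^*$. Copying the bookkeeping of Lemma \ref{l11}, I would attach to each $k$ a box of positions $x^{\go,k}\le x'\le x^{\ga,k}$, where $x^{\ga,0}$ sends $j$ to $0$ and lowers the rest of $H_0$ by one, $x^{\go,k}$ zeroes out $H_k$, and for $k\ge1$ the position $x^{\ga,k}$ zeroes $H_{k-1}\cap H_k$ and lowers $H_k\setminus H_{k-1}$ by one. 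A routine check (as in Lemma \ref{l11}) shows every position in box $k$ is reached from $x$ by an $H_k$-move and has a zero coordinate (at $j$ if $k=0$, at a vertex of $H_{k-1}\cap H_k\ne\emptyset$ if $k\ge1$), hence $m(x')=0$; and, arguing as in Lemma \ref{l2}, the set of height values over box $k$ equals the interval $[h_\cH(x^{\go,k}),h_\cH(x^{\ga,k})]$.

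Finally I would show these $p+1$ intervals cover $[m,h_\cH(x-me)]$, via three inputs: $h_\cH(x^{\ga,0})\ge h_\cH(x-me)$, because $x^{\ga,0}\ge x-me$ coordinatewise (the two agree and equal $0$ at $j$, and off $j$ we have $x^{\ga,0}_i\ge x_i-1\ge x_i-m$); the telescoping inequalities $h_\cH(x^{\ga,k})\ge h_\cH(x^{\go,k-1})-1$ from Corollary \ref{c0}, exactly as in Lemmas \ref{l11} and \ref{l12}; and the bottom value $h_\cH(x^{\go,p})=m$, with $m(x^{\go,p})=0$. I expect this last equality to be the main obstacle and the genuine point of departure from Lemma \ref{l12}. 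For the bound $h_\cH(x^{\go,p})\le m$: since $H^*$ is a transversal of $\cH_{V\setminus\{j\}}$, any hyperedge surviving in $x^{\go,p}$ (i.e.\ contained in $V\setminus H^*$) must contain $j$, and $x^{\go,p}_j=m$, so every play from $x^{\go,p}$ has length at most $m$. For $h_\cH(x^{\go,p})\ge m$: transversal-freeness gives a hyperedge $H^\dagger$ with $H^\dagger\cap H^*=\emptyset$, which then necessarily contains $j$ (a hyperedge avoiding both $j$ and $H^*$ would contradict $H^*$ being a transversal of $\cH_{V\setminus\{j\}}$), and from $x^{\go,p}$ one can make $m$ consecutive $H^\dagger$-moves. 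With the covering in hand, for a target $z$ I pick the box whose height interval contains $z$ and extract the required move. The fiddly parts of the write-up will be the ``routine check'' that every box consists of legal $H_k$-moves keeping $m=0$, and confirming $\cH_{V\setminus\{j\}}\ne\emptyset$ so that the appeal to minimality is legitimate.
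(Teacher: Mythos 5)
Your proof is correct and follows essentially the same route as the paper's: after the harmless translation reducing to $\mu=0$, you pick a minimum pile $j$, a hyperedge $H_0\ni j$, a transversal $H^*$ of $\cH_{V\setminus\{j\}}$ supplied by (A1), a (D1)-chain between them, and cover the required range by the same boxes of positions and the same telescoping/interval argument via Corollary \ref{c0} and Lemma \ref{l2}. The only substantive deviation is at the bottom endpoint, where your value $h_\cH(x^{\go,p})=m$ (equivalently $y_\cH(x^{\go,p})=m(x)-\mu+1$, since $j\notin H_p$ leaves pile $j$ at height $m$) is the correct one — the paper's proof asserts $y_\cH(x^{\go,p})=1$, which is not attainable — and, consistently, you rightly note that $\eta=m(x)-\mu$ in the lemma's statement cannot occur, so only the range $\eta\ge m(x)-\mu+1$ required by condition (C3) is, and needs to be, covered.
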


\proof
Let us fix a position $x\in\ZZP^V$ and assume that $x_j=m(x)$. Let us further fix an integer $0\leq \mu < m(x)$.

Let us first choose an arbitrary hyperedge $H\in\cH$ such that $j\in H$. By property (A1) there exists another hyperedge $H'\in\cH$ that intersects all hyperedges of $\cH$ that do not contain element $j\in V$. Then by property (D1) there exists a chain $H_0,\dots ,H_p$ such that $H_0=H$ and $H_p=H'$.

Let us then define $x^{\ga,0}$ and $x^{\go,0}$ by
\[
x^{\ga,0}_i = \begin{cases} \mu &\text{ if } i=j\\
x_i-1 &\text{ if } i\in H_0\setminus\{j\}\\
x_i &\text{ otherwise},\end{cases} ~~~\text{ and }~~~
x^{\go,0}_i=\begin{cases} \mu & \text{ for } i\in H_0\\ x_i &\text{ otherwise,}\end{cases}
\]
and define positions $x^{\ga,k}$ and $x^{\go,k}$ for $k=1,\dots ,p$ by
\[
x^{\ga,k}_i=\begin{cases} \mu&\text{ if } i\in H_{k-1}\cap H_k\\
x_i-1&\text{ if } i\in H_k\setminus H_{k-1}\\
x_i &\text{ if } i\not\in H_k,\end{cases}
~~~\text{ and }~~~
x^{\go,k}_i=\begin{cases} \mu&\text{ if } i\in H_k\\
x_i &\text{ if } i\not\in H_k.\end{cases}
\]

Let us observe next that the sets of $y_\cH(x')$ values for the ranges $x^{\ga,k}\geq x' \geq x^{\go,k}$, $k=0,1,\dots ,p$ form intervals by Lemma \ref{l2}. Namely, we have
\[
\begin{array}{c}
\left\{y_\cH(x')\mid x^{\ga,k}\geq x'\geq x^{\go,k} \right\} = \left[ y_\cH(x^{\go,k}), y_\cH(x^{\ga,k}) \right] \text{ for } k=0,1, \dots ,p.
\end{array}
\]
We claim that these intervals cover all values in the interval $[1,y_\cH(x)-1]$, as stated in the lemma.
To see this claim, we prove the following relations.
\[
\begin{array}{ll}
y_\cH(x^{\go,p}) &=1,\\
y_\cH(x^{\ga,k}) &\geq~ y_\cH(x^{\go,k-1})-1 ~~~\text{ for } 1\leq k \leq p, \text{ and }\\
y_\cH(x^{\ga,0}) &\geq y_\cH(x).
\end{array}
\]
The second group of inequalities follow by Corollary \ref{c0}, since these $y_\cH$-values are essentially the height by the definition \eqref{e-y}.
This is because for $i\not\in H_k\setminus H_{k-1}$ we have $x^{\ga,k}_i\geq x^{\go,k-1}_i$ and for the unique index $i\in H_k\setminus H_{k-1}$ we have $x^{\ga,k}_i=x_i-1$ and $x^{\go,k-1}_i=x_i$.

The first equality holds since $H_p=H'$ intersects all hyperedges that avoids element $j\in V$, and thus we have $h_\cH(x^{\go,p}-\mu e)=0$.
The last inequality holds since $x^{\ga,0}-\mu e\geq x-m(x) e$.
\qed

\medskip

\noindent{\textbf{Proof of Theorem \ref{t2}}:}
Clearly, property (A1) is stronger than property (A0). Lemmas \ref{l11}, \ref{l12} and \ref{l13} imply that properties (B1), (C2), and (C3) hold.
Thus, the statement follows by Corollary \ref{c1}.
\qed

\section{Classes of JM Hypergraphs}
\label{s4}
In this section we apply Theorem \ref{t2} to a variety of hypergraph classes and show that they are JM.
A small difficulty in this is due to the fact that condition (D2) depends not only on the hypergraph but also on the position. To make our proofs conceptually simpler, we introduce another condition that depends only on the structure of the given hypergraph, and prove that this condition implies (D2):

\begin{itemize}
\item[(D3)] For every subfamily $\cF\subseteq \cH\subseteq 2^V$ such that $V(\cF)\neq V$ there exist hyperedges $F\in\cF$ and $S\in\cH$ such that $\emptyset\neq (S\setminus F)\subseteq V\setminus V(\cF)$.
\end{itemize}

\begin{lemma}\label{l10}
If a hypergraph $\cH\subseteq 2^V$ satisfies property {\rm (D3)}, then it satisfies {\rm (D2)} as well.
\end{lemma}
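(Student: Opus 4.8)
The plan is to show directly that (D3) gives us, for every position $x$ with $m(x)>0$, a hyperedge $H$ witnessing (D2). Fix such an $x$ and let $j\in V$ be an index with $x_j=m(x)$. The key idea is to run the height ``backwards'': a height sequence starting from $x$ can be realized entirely by slow moves executed in any order (as noted in Section~\ref{ss22}), so I want to find a height move whose hyperedge contains $j$ — decreasing such a hyperedge drops both $h_\cH$ and $m(x)$ by exactly one. More precisely, I will let $\cF$ be the family of hyperedges used (as slow moves) in a fixed height sequence starting from $x-m(x)e$ in which we \emph{first} exhaust all moves not touching the coordinates that equal $0$ in $x-m(x)e$; equivalently, work with $W=\{i\in V\mid x_i>m(x)\}$ and a height sequence of $\cH_W$ on $x-m(x)e$.

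The core step: I claim the set $\cF$ of hyperedges appearing in any height sequence of $NIM_\cH$ started at $x$ must have $V(\cF)=V$. Indeed, if $V(\cF)\neq V$, then by (D3) there is $F\in\cF$ and $S\in\cH$ with $\emptyset\neq(S\setminus F)\subseteq V\setminus V(\cF)$. Since the piles in $V\setminus V(\cF)$ are never touched by the height sequence, they retain their original (positive, when $m(x)>0$, or the relevant) values, so after performing all the moves of the height sequence we could still perform an $S$-move — contradicting maximality of the height. Hence $V(\cF)=V$, so in particular some hyperedge $H\in\cF$ contains the index $j$ with $x_j=m(x)$. Replacing the corresponding move in the height sequence by the slow $H$-move $x\to x^{s(H)}$, and using that height moves can be reordered (so $h_\cH(x^{s(H)})=h_\cH(x)-1$), we get exactly $h_\cH(x^{s(H)})=h_\cH(x)-1$, and since $j\in H$ and $x_j=m(x)$ we also get $m(x^{s(H)})=m(x)-1$. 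That is precisely (D2).

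A mild subtlety I need to handle carefully: (D2) is phrased for positions with $m(x)>0$, and I want the hyperedge $H$ to both be a height move \emph{and} contain an index $j$ realizing the minimum. The argument above only guarantees $V(\cF)=V$ for \emph{some} height sequence; I should make sure that the same height sequence both (a) uses only hyperedges whose union is $V$ and (b) can be reordered to start with an $H$-move for $H\ni j$. Reordering is fine by the reordering property of height sequences, and (D3) applies to the family $\cF$ of hyperedges in \emph{any} height sequence, so picking any height sequence, getting $V(\cF)=V$, choosing $H\in\cF$ with $j\in H$, and reordering to put that move first does the job. I expect the main obstacle to be the bookkeeping in this reordering/backward-move argument — ensuring that ``untouched coordinates stay untouched'' is used correctly (it relies on $m(x)>0$ so that there genuinely are stones available, and on the slow-move normalization) and that the contradiction with maximality of $h_\cH$ is airtight when the extra $S$-move is appended.
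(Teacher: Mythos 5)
Your argument breaks at the ``append an $S$-move at the end of the height sequence'' step, and the core claim it is meant to support is in fact false. Property (D3) only gives $\emptyset\neq S\setminus F\subseteq V\setminus V(\cF)$; it does not make $S$ disjoint from $V(\cF)$, so $S$ may (and typically must) meet $F$. At the end of a height sequence \emph{no} move is available at all -- otherwise the sequence could be extended -- so every hyperedge, in particular $S$, contains an exhausted pile, which necessarily lies in $S\cap F$ since the piles outside $V(\cF)$ are untouched and positive. Hence ``we could still perform an $S$-move'' does not follow, and there is no contradiction with maximality. Moreover, the claim that every height sequence from $x$ uses hyperedges covering all of $V$ is simply not true under (D3): take $\cH=\{\{1,2\},\{2,3\},\{3,4\},\{4,5\},\{5,1\}\}$ (the edges of $C_5$, which satisfies (D3) by Lemma \ref{l-connected-chain-D2}) and $x=(1,1,1,1,1)$; then $h_\cH(x)=2$ and the height sequence consisting of a $\{1,2\}$-move followed by a $\{3,4\}$-move misses vertex $5$. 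So the route ``$V(\cF)=V$, hence some $H\in\cF$ contains $j$'' cannot be repaired as stated.

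The paper's proof avoids both problems by two changes you are missing. First, $\cF$ is taken to be the set of \emph{all} hyperedges $H$ with $h_\cH(x^{s(H)})=h_\cH(x)-1$ (all height-move hyperedges), and the indirect assumption is not $V(\cF)\neq V$ but the weaker-looking $x_i>m(x)$ for all $i\in V(\cF)$ -- exactly the negation of (D2), which only requires $V(\cF)$ to contain \emph{some} minimum pile, not a prescribed $j$. Second, instead of appending an $S$-move, one \emph{swaps}: in a height sequence (encoded by multiplicities $\mu$) with $\mu(F)>0$, replace one $F$-move by one $S$-move. This is feasible precisely because $S\cap V(\cF)\subseteq F$ (those piles gain back the unit released by deleting an $F$-move) and because the piles in $S\setminus V(\cF)$ are untouched and at least $m(x)>0$. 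The modified $\mu'$ is again a height sequence, so $S$ occurs in a height sequence, hence $S$ is a height move and $S\subseteq V(\cF)$, contradicting $\emptyset\neq S\setminus F\subseteq V\setminus V(\cF)$. Your final bookkeeping (reordering so the chosen height move comes first, and $m(x^{s(H)})=m(x)-1$ when $H$ contains a minimum pile) is fine, but the existence of such an $H$ needs this swap argument rather than the appending/covering claim.
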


\proof
Let us fix a position $x\in\ZZP$ for which $m(x)>0$ holds.
An equivalent way of saying property (D2) is that there exists a hyperedge $H\in\cH$ such that $x\to x^{s(H)}$ is a height move and
that for some $i\in H$ we have $x_i=m(x)$.

To prove the lemma let us assume indirectly that this is not the case. In other words, let us introduce
\[
\cF ~=~ \{H\in \cH \mid h_\cH(x^{s(H)})=h_\cH(x)-1\},
\]
and assume indirectly that $x_i>m(x)$ for all $i\in V(\cF)$, implying in particular that $V(\cF)\neq V$.

By property (D3) we have hyperedges $F\in\cF$ and $S\in\cH$ such that $S\setminus F\neq \emptyset$ and
$S\setminus F \subseteq V\setminus V(\cF)$. Let us now consider a mapping $\mu:\cH\to\ZZP$ that defines a height sequence for position $x$,
that is, we have $\sum_{H\in\cH} \mu(H)=h_\cH(x)$ and $x'=\sum_{H\in\cH} \mu(H)\chi(H) \leq x$. In other words, $\mu(H)$ is the number of $H$-moves in this height sequence.
Since $F\in\cF$, we can choose $\mu$ such that $\mu(F)>0$. Note that by our assumption, we have for any $i\in V\setminus V(\cF)$ that
$x_i'=0<m(x)\leq x_i$. Thus, if we define

\[
\mu'(H)~=~ \begin{cases} \mu(F)-1 &\text{ if } H=F\\1 &\text{ if } H=S\\\mu(H) &\text{ otherwise,}\end{cases}
\]
then $\mu'$ also defines a height sequence, with $\mu'(S)>0$ in contradiction with the fact that $S\not\subseteq V(\cF)$.
This contradiction proves that our indirect assumption is not true, that is, there exists an $i\in V(\cF)$ with $x_i=m(x)$, from which the claim of the lemma follows.
\qed

In the sequel we need to argue only about the structure of our constructions, and show that properties (A1), (D1) and (D3) hold.

\subsection{Matroid hypergraphs}

In this section, we study JM matroid hypergraphs.

Let us call $\cH\subseteq 2^V$ a \emph{matroid hypergraph} if
the following exchange property holds for all pairs of hyperedges $H,H'\in\cH$:
\begin{equation}\label{e-exchangeaxiom}
\forall i\in H\setminus H' ~ \ \exists j\in H'\setminus H:
\ \ (H\setminus\{i\})\cup\{j\} \in\cH.\tag{M}
\end{equation}
In other words, $\cH$ is a matroid hypergraph
if it is the set of bases of a matroid (see \cite{Wel76,Wel2010}).

\begin{lemma}\label{lemma-matroid-1ac}
Matroid hypergraphs satisfy {\rm (D1)} and {\rm (D3)}.
\end{lemma}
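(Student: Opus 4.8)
The plan is to verify the two properties separately, using the basis-exchange axiom \eqref{e-exchangeaxiom} throughout. For property (D1), fix two bases $H,H'\in\cH$ and I would produce a chain from $H$ to $H'$ staying inside $H\cup H'$ by a standard ``symmetric-exchange'' walk: I would induct on $|H\setminus H'|$. If this is $0$ then $H=H'$ and the one-term chain works. Otherwise pick $i\in H\setminus H'$; by \eqref{e-exchangeaxiom} there is $j\in H'\setminus H$ with $H_1=(H\setminus\{i\})\cup\{j\}\in\cH$. Then $|H_1\cap H|=|H|-1$ (so $H_1\cap H\neq\emptyset$ provided $|H|\geq 2$; the rank-$1$ case must be handled, but there all bases are singletons and a matroid hypergraph that is a single singleton is the only connected one, so (D1) is trivial there), $|H_1\setminus H|=1$, and crucially $H_1\subseteq H\cup H'$ and $|H_1\setminus H'|=|H\setminus H'|-1$. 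By induction there is a chain from $H_1$ to $H'$ inside $H_1\cup H'\subseteq H\cup H'$; prepending $H_0=H$ gives the desired chain. I expect this to go through cleanly; the only care needed is the degenerate rank-$1$ (equivalently $|H\cap H_1|$ could be empty) situation, which I would dispatch in one sentence.

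For property (D3), fix a subfamily $\cF\subseteq\cH$ with $V(\cF)\neq V$, and pick $v\in V\setminus V(\cF)$. Since $v\in V=\bigcup_{H\in\cH}H$, there is a basis $S\in\cH$ with $v\in S$. Pick any $F\in\cF$ (nonempty since $\cF\subseteq\cH$ and $\emptyset\notin\cH$); note $v\notin F$ since $F\subseteq V(\cF)$. Now $v\in S\setminus F$, so $S\setminus F\neq\emptyset$, but $S\setminus F$ need not be contained in $V\setminus V(\cF)$ — this is the point where I have to do work and the main obstacle. To fix it I would repeatedly apply \eqref{e-exchangeaxiom} to shrink $S\setminus F$: as long as there is some $i\in S\setminus F$ with $i\in V(\cF)$, the exchange axiom (applied with the roles $H=S$, $H'=F$) yields $j\in F\setminus S$ with $S'=(S\setminus\{i\})\cup\{j\}\in\cH$; replacing $S$ by $S'$ keeps $v\in S'$ (since $i\neq v$, as $v\notin V(\cF)$) and strictly decreases $|(S\setminus F)\cap V(\cF)|$. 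After finitely many steps I reach a basis, still called $S$, with $v\in S\setminus F$ and $(S\setminus F)\cap V(\cF)=\emptyset$, i.e. $\emptyset\neq S\setminus F\subseteq V\setminus V(\cF)$, which is exactly (D3).

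The only subtlety in the (D3) argument is making sure the exchange step really decreases the measure: replacing $i\in S\setminus F$ by $j\in F\setminus S$ removes $i$ from $S\setminus F$, and the new element $j$ lies in $F$, hence not in the new $S\setminus F$; so $|S\setminus F|$ itself strictly decreases, which certainly forces termination. I would present the measure as $|S\setminus F|$ to keep the induction transparent. Once both (D1) and (D3) are established the lemma is proved; combined with Lemma \ref{l10} and Theorem \ref{t2} this will later give that transversal-free matroid hypergraphs are JM.
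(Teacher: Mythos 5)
Your proposal is correct and follows essentially the same route as the paper: for (D1) your induction on $|H\setminus H'|$ is the paper's extremal argument (a chain inside $H\cup H'$ minimizing $|H'\setminus H_p|$, extended by one basis exchange) read forwards, and for (D3) your iterated exchange removing elements of $(S\setminus F)\cap V(\cF)$ while retaining a fixed vertex $v\notin V(\cF)$ is the same single-exchange step the paper uses when it minimizes $|(S\setminus F)\cap V(\cF)|$ over pairs $S\in\cH$, $F\in\cF$ with $S\not\subseteq V(\cF)$. The rank-$1$ caveat you flag (consecutive chain members must intersect, which needs $|H|\geq 2$) is likewise left unaddressed in the paper's own proof, so it does not distinguish your argument from theirs.
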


\proof
For property (D1),  let us fix two arbitrary hyperedges $H,H'\in\cH$,
and let us consider a chain $\cC=(H_0,\dots ,H_p)$ of hyperedges contained in $H\cup H'$ such that
$H_0=H$ and $d(\cC)=|H'\setminus H_p|$ is as small as possible.
Since there are only finitely many different chains in $\cH$, the quantity $d(\cC)$ is well defined.
We claim that $d(\cC)=0$, which implies property (D1), since this applies to any two hyperedges.
To see this claim, assume that $d(\cC)>0$, and apply the exchange axiom for sets $H_p$ and $H'$.
Since $d(\cC)>0$ we have an element $i\in H_p\setminus H'$, and by axiom (M) there exists an element
$j\in H'\setminus H_p$ such that $H_{p+1}=(H_p\setminus\{i\})\cup\{j\}\in\cH$.
Then for $\cC'=(H_0,\dots ,H_p,H_{p+1})$ it follows that $d(\cC')=d(\cC)-1$, contradicting the fact that $d(\cC)$ is as small as possible.
This contradiction proves our claim.

For property (D3), let us consider an arbitrary subfamily $\cF\subseteq \cH$ such that $V(\cF)\neq V$,
and choose two distinct sets $S\in\cH$ and $F\in\cF$ such that $S\not\subseteq V(\cF)$ and
$|(S \setminus F)\cap V(\cF)|$ is as small as possible.
We claim that $|(S \setminus F)\cap V(\cF)|=0$, and hence these sets $S$ and $F$ show property (D3).

To see this claim, assume that there exists an element $i\in (S \setminus F)\cap V(\cF)$.
By the exchange axiom there exists a $j\in
F\setminus S$ such that $S'=(S\setminus\{i\})\cup \{j\}\in \cH$. Then we have
$S'\setminus V(\cF)=S\setminus V(\cF)\neq \emptyset$ and $(S'\setminus V(\cF))\cap V(\cF)= ((S\setminus V(\cF))\cap V(\cF))\setminus\{i\}$.
Since this contradicts the minimality of $|(S \setminus F)\cap V(\cF)|$ our claim follows.
\qed

\begin{theorem}\label{theorem-matroid-1ac}
Let $\cH$ be a matroid hypergraph.
Then $\cH$ is a JM hypergraph if and only if it is minimal transversal-free.
\end{theorem}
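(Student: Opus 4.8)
\textbf{Proof proposal for Theorem \ref{theorem-matroid-1ac}.}
The plan is to deduce the theorem by combining Lemma \ref{lemma-matroid-1ac}, Lemma \ref{l10}, Theorem \ref{t2}, and Lemma \ref{l3}, so that the only real work is to verify that a minimal transversal-free matroid hypergraph automatically satisfies the connectivity hypothesis hidden in property (A1) and to check that nothing forces us to separately establish (D2). First I would observe that the ``only if'' direction is immediate: by Lemma \ref{l3}, every JM hypergraph is minimal transversal-free, with no appeal to the matroid structure. So the content is the ``if'' direction.

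For the ``if'' direction, suppose $\cH$ is a matroid hypergraph that is minimal transversal-free. Then property (A1) holds by hypothesis. By Lemma \ref{lemma-matroid-1ac}, $\cH$ satisfies (D1) and (D3), and by Lemma \ref{l10}, (D3) implies (D2). Hence $\cH$ satisfies (A1), (D1), and (D2), so Theorem \ref{t2} gives that $\cH$ is JM. This is essentially a one-line deduction once the supporting lemmas are in hand, so the write-up should be short.

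The one point I would take care to address explicitly is the relationship between ``minimal transversal-free'' and ``connected''. Lemma \ref{necessaryconditions} shows a JM hypergraph is connected, and Theorem \ref{t2} does not list connectedness as a separate hypothesis, so I expect no issue; but to be safe I would note that for matroid hypergraphs minimal transversal-freeness already forces connectedness (a disconnected matroid is a direct sum, and in a direct sum of matroids every base restricted to one component, together with a fixed base of the other, produces a proper induced subhypergraph that is still transversal-free, unless one component is trivial — and triviality of a component means that element lies in every base, contradicting transversal-freeness since a single-element hyperedge would then be a transversal). This remark is the only place where a little care is needed; everything else is a citation chain. Finally I would record the stated corollary: since $\binom{V}{k}$ with $n=2k$, and more generally every self-dual matroid hypergraph, is transversal-free (if $H$ were a transversal then $V\setminus H\in\cH$ would be disjoint from $H$, a contradiction) and, being the set of bases of a matroid, is automatically minimal transversal-free by the same exchange-axiom argument used for (D3), the theorem implies all self-dual matroid hypergraphs are JM.

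The main obstacle, such as it is, is purely expository: making sure the reader sees that ``minimal transversal-free'' is exactly (A1) and that (D2) comes for free via (D3) and Lemma \ref{l10}, so that Theorem \ref{t2} applies verbatim. There is no hard estimate or construction to carry out here.
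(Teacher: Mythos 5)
Your proposal is correct and follows the same route as the paper: Lemma \ref{l3} for the ``only if'' direction, and for the ``if'' direction the chain (A1) by hypothesis, (D1) and (D3) from Lemma \ref{lemma-matroid-1ac}, (D2) via Lemma \ref{l10}, and then Theorem \ref{t2}. Your aside on connectedness is superfluous (and slightly conflates matroid direct-sum connectivity with hypergraph connectivity), but it does no harm since Theorem \ref{t2} does not require connectedness.
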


\proof
It follows from Theorem \ref{t2} and Lemmas \ref{l3} and \ref{lemma-matroid-1ac}. \qed

\begin{corollary}\label{t1}
Self-dual matroid hypergraphs are JM.
\end{corollary}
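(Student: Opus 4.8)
The plan is to invoke Theorem~\ref{theorem-matroid-1ac}, which already states that a matroid hypergraph is JM if and only if it is minimal transversal-free. Since (by Lemma~\ref{lemma-matroid-1ac}) every matroid hypergraph satisfies (D1) and (D3), the entire content of the corollary reduces to one purely combinatorial fact: \emph{a self-dual matroid hypergraph is minimal transversal-free}. So the whole task is to verify, for a self-dual matroid hypergraph $\cH\subseteq 2^V$ with $|V|=n$, that (a) $\cH$ has no transversal hyperedge, and (b) every nonempty proper induced subhypergraph $\cH_S$ does have one.

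First I would record the numerology. All members of $\cH$ are bases of a matroid, hence share a common cardinality $r$ (the rank). Self-duality gives $V\setminus H\in\cH$ for every $H\in\cH$, so $|V\setminus H|=r$ as well, forcing $n=2r$. (As a sanity check, a self-dual matroid with $n\ge 1$ has no loops, so the standing assumption $V=\bigcup_{H\in\cH}H$ is consistent; this is not otherwise needed.) Part (a) is then immediate: for any $H\in\cH$ the complement $V\setminus H$ is again in $\cH$ and is disjoint from $H$, so $H$ meets no hyperedge $V\setminus H$ and is therefore not a transversal; thus $\cH$ is transversal-free.

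For part (b), take any $S\subsetneq V$ with $\cH_S\neq\emptyset$ and pick a hyperedge $B\in\cH_S$, so $B\subseteq S$ and $|B|=r$. I claim $B$ is a transversal of $\cH_S$. Indeed, if it were not, there would be $B'\in\cH_S$ with $B\cap B'=\emptyset$, hence $B'\subseteq S\setminus B$ and $r=|B'|\le|S\setminus B|=|S|-r\le (n-1)-r=r-1$, a contradiction. So $\cH_S$ has a transversal hyperedge for every such $S$, i.e.\ $\cH$ is minimal transversal-free. Feeding this into Theorem~\ref{theorem-matroid-1ac} (together with Lemma~\ref{lemma-matroid-1ac}) shows $\cH$ is JM.

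There is no real obstacle in this argument; the one step worth isolating is the observation that self-duality forces $n=2r$, after which minimality is just the counting inequality $|S\setminus B|=|S|-r\le r-1<r=|B'|$ driven by $|S|\le n-1$. Everything else is a direct appeal to the theorem proved just above.
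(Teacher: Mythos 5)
Your proposal is correct and follows essentially the same route as the paper: apply Theorem~\ref{theorem-matroid-1ac} after checking that self-duality forces all bases to have size $n/2$, that complements give disjoint hyperedges (transversal-freeness), and that on any proper induced vertex set two size-$n/2$ hyperedges must intersect (minimality). The counting step you isolate is exactly the paper's observation that in a proper induced subhypergraph on at most $2k-1$ elements any two hyperedges of size $k$ meet.
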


\proof
We apply  Theorem \ref{theorem-matroid-1ac}, and show that self-dual matroid hypergraphs are minimal transversal-free.

Let $\cH$ be a self-dual matroid.
Since for every hyperedge in $\cH$, its complement is also a hyperedge,
no $H \in \cH$ is a transversal of $\cH$.
Furthermore, by self-duality of $\cH$, any hyperedge $H \in \cH$ has size $k=n/2$.
In any proper induced subhypergraph on at most $2k-1$ elements any two hyperedges of size $k$ must intersect.
Therefore $\cH$ is minimal transversal-free. \qed

Recall that any matroid hypergraph is $k$-uniform for some $k$.
If $k > n/2$, then it is not transversal-free, and hence not JM.
If $k=n/2$, then we can see that a matroid hypergraph is JM if and only if it is self-dual.
For $k < n/2$, we remark that no matroid hypergraph is self-dual.
However, the following discussion shows that there are a number of JM matroid hypergraphs.

Let $V=\{1,\dots , 7\}$, and define $\cH\subseteq 2^V$ by $\cH={{V}\choose{3}} \setminus \cF$, where
$\cF$ denotes Fano plane, i.e.,
\[\cF=\{
\{1,2,3\},
\{1,4,5\},
\{2,4,6\},
\{1,6,7\},
\{2,5,7\},
\{3,4,7\},
\{3,5,6\}
\}.
\]
Then we can see that $\cH$ is a matroid hypergraph and minimal transversal-free.

We extend the above example and show that there exists a large family of matroid hypergraphs
with $n=2k +\delta$ for  $\delta >0$, which are minimal transversal-free and hence JM.

Let $\delta$ and $k$ be integers such that $0< \delta \leq k-3$, and
assume that $V=\{1, \dots ,n\}$ where $n=2k+\delta$.
Let us consider a $(k+\delta -1)$-uniform hypergraph $\cK \subseteq 2^V$ and associate to it the hypergraph

\begin{equation}\label{e-K}
\cH=\cH(\cK)=\binom{V}{k} \setminus \left\{\binom{K}{k} \mid K \in \cK\right\}.
\end{equation}

We shall also consider the following conditions:
\begin{itemize}
\item[(K1)]$|K \cap K'| \geq \delta$ for all hyperedges $K,K' \in \cK$.
\item[(K2)]$|K \cap K'| \leq k-2$ for all distinct hyperedges $K,K' \in \cK$.
\item[(K3)] No singleton is a transversal of $\cK$.
 \end{itemize}
Define

\begin{lemma}
\label{lemma-k2ae}
Let $\cK$ be a $(k+\delta -1)$-uniform hypergraph that satisfies {\rm (K2)},
 $S \subseteq V$ be a set of size $|S|=k-1$, and  $W \subseteq V \setminus S$.
Assume that for any $v \in W$, $S \cup \{v\}$ is not contained in $\cH$.
Then  we have $|W|\leq \delta$.
\end{lemma}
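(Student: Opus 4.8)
The goal is to bound $|W|$ where $W\subseteq V\setminus S$, $|S|=k-1$, and $S\cup\{v\}\notin\cH$ for all $v\in W$. Recall $\cH=\binom{V}{k}\setminus\{\binom{K}{k}\mid K\in\cK\}$, so $S\cup\{v\}\notin\cH$ means precisely that $S\cup\{v\}\subseteq K_v$ for some $K_v\in\cK$. First I would observe that the set $S$ (of size $k-1$) is contained in each such $K_v$ (of size $k+\delta-1$), so $K_v\setminus S$ has size exactly $\delta$, and $v\in K_v\setminus S$. Thus for each $v\in W$ we get a hyperedge $K_v\in\cK$ with $S\subseteq K_v$ and $v\in K_v$.

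The key step is to analyze how many \emph{distinct} hyperedges $K_v$ can arise and how many elements of $W$ each can absorb. If $K_v=K_{v'}$ for $v\ne v'$, then both $v,v'$ lie in the $\delta$-element set $K_v\setminus S$; so a single hyperedge of $\cK$ can account for at most $\delta$ elements of $W$. Hence if only one distinct hyperedge occurs among the $K_v$, we are already done with $|W|\le\delta$. Otherwise, take two distinct hyperedges $K=K_v$ and $K'=K_{v'}$ from this collection. Both contain $S$, so $S\subseteq K\cap K'$, giving $|K\cap K'|\ge|S|=k-1$. But condition (K2) says $|K\cap K'|\le k-2$ for distinct $K,K'\in\cK$ — a contradiction. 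Therefore all the $K_v$ coincide, and the earlier observation forces $|W|\le\delta$.

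So the proof is essentially a one-line application of (K2): two distinct hyperedges of $\cK$ cannot both contain a common $(k-1)$-set. The main (very mild) obstacle is just being careful about the degenerate case where $W$ is empty or a single element, which is trivially covered since $\delta>0$, and making sure the size bookkeeping $|K_v|=k+\delta-1$, $|S|=k-1$ is used correctly to conclude $|K_v\setminus S|=\delta$. I expect no real difficulty here; the lemma is a structural warm-up, presumably used later to verify that $\cH(\cK)$ is minimal transversal-free (so that Theorem~\ref{theorem-matroid-1ac} applies).
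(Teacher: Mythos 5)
Your proof is correct and follows essentially the same route as the paper: both arguments hinge on the observation that two distinct hyperedges of $\cK$ covering the sets $S\cup\{v\}$ would share the $(k-1)$-set $S$, violating (K2), so all the $K_v$ coincide and $W$ sits inside the single $\delta$-element set $K_v\setminus S$. The paper merely phrases this contrapositively (assuming $|W|\geq\delta+1$ and using a union-size count to force two distinct $K_v$), which is the same bookkeeping you do.
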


\proof
Since $S \cup \{v\}$ is not contained in $\cH$ for every $v \in W$, there exists $K_v$  in $\cK$ such that $K_v \supseteq S \cup \{v\}$.
Note that the union of the sets $K_v$ is of size at least $k-1+|W|$,
and thus if $|W|\geq \delta +1$,  we have two elements $v$ and $v'$ in $W$ such that $K_v\not= K_{v'}$.
However, since $K_v\cap K_{v'}\supseteq S$, this contradicts (K2). \qed

\begin{lemma}
\label{lemma-exmat1}
If a $(k+\delta -1)$-uniform hypergraph $\cK$ satisfies {\rm (K1)}, {\rm (K2)}, and {\rm (K3)}, then  $\cH$ is minimal transversal-free.
\end{lemma}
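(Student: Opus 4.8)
The plan is to show directly that $\cH=\cH(\cK)$ is transversal-free and that every proper nonempty induced subhypergraph $\cH_T$ with $T\subsetneq V$ fails to be transversal-free, i.e.\ some hyperedge of $\cH_T$ meets all others. For transversal-freeness, take any $H\in\cH$, so $|H|=k$ and $H\notin\binom{K}{k}$ for all $K\in\cK$; I want a second hyperedge $H'\in\cH$ with $H\cap H'=\emptyset$. The natural candidate is a $k$-subset of $V\setminus H$, which has size $k+\delta\ge k$. I need to find such a subset that is \emph{not} contained in any $K\in\cK$. If every $k$-subset of $V\setminus H$ were inside some $K\in\cK$, then (using (K2), which forbids two distinct $K,K'$ sharing $k-1$ elements, hence forbids two distinct $K$'s each containing a common $(k-1)$-set) one shows by the same counting idea as in Lemma \ref{lemma-k2ae} that a single $K$ would have to contain too many elements of $V\setminus H$ — here $|V\setminus H|=k+\delta$ and a single $K$ has size $k+\delta-1$, so it cannot contain all of $V\setminus H$; a short argument via (K2) should rule out covering all $k$-subsets by distinct $K$'s. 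I would invoke (K1) where needed to control intersections. This gives the disjoint pair $H,H'$, so $\cH$ is transversal-free.

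For minimality I would take an arbitrary $T\subsetneq V$ with $\cH_T\neq\emptyset$, so $|T|\ge k$ and $T\neq V$, and exhibit an edge of $\cH_T$ that is a transversal of $\cH_T$. If $|T|\le 2k-1$ then any two $k$-subsets of $T$ intersect, so \emph{every} edge of $\cH_T$ is a transversal and we are done immediately; thus I may assume $k\le 2k-1<|T|\le n-1=2k+\delta-1$, i.e.\ $|T|=2k-1+r$ with $1\le r\le\delta$. Pick any $H\in\cH_T$; the edges of $\cH_T$ disjoint from $H$ are exactly the $k$-subsets $H'\subseteq T\setminus H$ with $H'\notin\binom{K}{k}$ for all $K\in\cK$, and $|T\setminus H|=k-1+r\le k-1+\delta$. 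I want to choose $H$ so that \emph{no} such $H'$ exists, i.e.\ every $k$-subset of $T\setminus H$ lies in some $K\in\cK$. Since $|T\setminus H|\le k-1+\delta<k+\delta-1+1$, a clever choice plus Lemma \ref{lemma-k2ae} (with $S$ ranging over the $(k-1)$-subsets of $T\setminus H$ and $W$ the remaining $r$ vertices) should force at most one $K\in\cK$ to be responsible, and then $T\setminus H\subseteq K$ for that single $K$, which is possible precisely because $|T\setminus H|\le k-1+\delta=|K|$. Condition (K3) enters to guarantee that such configurations do not degenerate (no single vertex kills everything), ensuring the required $H$ can actually be selected inside $\cH_T$.

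The main obstacle, and the place where I expect the real work, is the \emph{selection of $H$} in the minimality argument: I must find $H\in\cH_T$ for which $T\setminus H$ is contained in a single $K\in\cK$ (equivalently, all its $k$-subsets are non-edges), while simultaneously $H$ itself is an edge of $\cH$ (not inside any member of $\cK$). This is a two-sided constraint — $H$ must avoid being covered by $\cK$ while its complement-in-$T$ must be covered — and reconciling them will need (K1) to bound $|K\cap K'|$ from below and (K3) to avoid transversal singletons in $\cK$; I expect to argue by choosing $K\in\cK$ with $|K\cap T|$ maximal, setting $H=T\setminus K$ padded out to size $k$ by vertices of $K$, and then checking via (K2)/(K3) that this $H$ is genuinely an edge. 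Once $H$ is pinned down, Lemma \ref{lemma-k2ae} does the counting and the transversality of $H$ in $\cH_T$ follows. Combining the two parts, $\cH=\cH(\cK)$ is minimal transversal-free, which by Theorem \ref{theorem-matroid-1ac} (once $\cH$ is also verified to be a matroid hypergraph, as in the displayed Fano example and \eqref{e-K}) yields that $\cH$ is JM.
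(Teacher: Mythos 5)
Your plan is essentially the paper's own proof: transversal-freeness via Lemma \ref{lemma-k2ae} applied to a $(k-1)$-set $S\subseteq V\setminus H$ with the remaining $\delta+1$ vertices as $W$, and minimality by disposing of $|T|\le 2k-1$ trivially, then choosing $K\in\cK$ with $|K\cap T|$ maximal (equivalent to the paper's choice maximizing $|R\setminus K|$ for $R=V\setminus T$), taking the transversal edge $H\supseteq T\setminus K$ padded by vertices of $K\cap T$, and certifying $H\in\cH$ via Lemma \ref{lemma-k2ae} when $|T\setminus K|<k$ and via (K1) when $|T\setminus K|=k$, with (K3) guaranteeing $R\not\subseteq K$ so that $|T\setminus K|\le k$. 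The steps you leave as ``should work'' are exactly the short counting arguments the paper supplies, so the approach is correct and matches the published proof.
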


\proof
We first show that $\cH$ is transversal-free, i.e.,
any hyperedge $H \in \cH$ has a hyperedge $H' \in \cH$ such that $H \cap H'=\emptyset$.
Let $S \subseteq V \setminus H$ with $|S|=k-1$, and $W= V \setminus (H \cup S)$.
Since $|W| > \delta$,  by  Lemma \ref{lemma-k2ae} we must have at least one $v \in W$  such that
$H'=S \cup \{v\}$ belongs to $\cH$.

We next show that for any nonempty set $R \subseteq V$, the induced subhypergraph $\cH_{V \setminus R}$ is either empty or it has a transversal
$T \in  \cH_{V \setminus R}$.

If $|R| \geq \delta+1$, then any two hyperedges in $\cH_{V \setminus R}$
intersect.
Thus it remains to consider the case of $|R| \leq \delta$.
Let $K \in \cK$ be a hyperedge that maximizes $|R \setminus K|$.
Then by (K3) we have $R \not\subseteq K$, and thus  $|V\setminus (R \cup K)|\leq k$ is implied.

If $|V\setminus (R \cup K)|< k$, let $S$ be a set such that $V\setminus (R \cup K) \subseteq S\subseteq V \setminus R$ and $|S|=k-1$, and let $W=K\setminus (R \cup S)$.
Since $|W| = k+\delta+1-|R| > \delta$,
by  Lemma \ref{lemma-k2ae} we must have at least one $v \in W$  such that
$H'=S \cup \{v\}$ belongs to $\cH$.
We claim that
$H'$ is a transversal of $\cH_{V \setminus R}$.
Indeed, $V\setminus (H' \cup R)$ is a subset of $K$, and thus no subset of it (of size $k$) is contained in $\cH$.

On the other hand, if  $|V\setminus (R \cup K)|= k$,
then $H'=V\setminus (R \cup K)$ is a transversal hyperedge in $\cH_{V \setminus R}$.
Indeed, $H' \in \cH_{V \setminus R}$,
since  otherwise there exists a $K' \in \cK$ such that $H' \subseteq K'$.
This  implies
$|K \cap K'| < \delta$, contradicting (K1).
Furthermore, $\cH_{V \setminus R}$ contains no hyperedge disjoint from $H'$,
since $V\setminus (R \cup H')$ is a subset of $K \in \cK$. \qed

\begin{lemma}
\label{lemma-exmat2}
If a $(k+\delta -1)$-uniform hypergraph $\cK$ satisfies {\rm (K2)}, then  $\cH$ is a matroid hypergraph.
\end{lemma}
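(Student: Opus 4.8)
The plan is to verify the matroid exchange axiom \eqref{e-exchangeaxiom} directly for $\cH=\cH(\cK)$ as defined in \eqref{e-K}, using only that $\cK$ is $(k+\delta-1)$-uniform and satisfies (K2). Recall that $\cH$ consists of all $k$-subsets of $V$ except those that sit inside some $K\in\cK$; equivalently, a $k$-set $H\subseteq V$ fails to lie in $\cH$ precisely when $H\subseteq K$ for some (necessarily unique, by the uniformity of $\cK$) member $K$ of $\cK$. So I would fix $H,H'\in\cH$ and an element $i\in H\setminus H'$, and I must produce $j\in H'\setminus H$ with $H''=(H\setminus\{i\})\cup\{j\}\in\cH$. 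Since $|H\setminus H'|=|H'\setminus H|$ and $i\notin H'$, there are $|H'\setminus H|\geq 1$ candidate swaps $j$; the bad swaps are exactly those for which $H''$ lies inside some member of $\cK$.

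The key step is a counting/uniqueness argument bounding the number of bad swaps. Suppose two distinct candidates $j,j'\in H'\setminus H$ both give bad swaps, i.e. $H_j:=(H\setminus\{i\})\cup\{j\}\subseteq K_j\in\cK$ and $H_{j'}:=(H\setminus\{i\})\cup\{j'\}\subseteq K_{j'}\in\cK$. If $K_j=K_{j'}$, then $K_j\supseteq (H\setminus\{i\})\cup\{j,j'\}$, a set of size $k+1>k+\delta-1$ only if $\delta<2$; more carefully, $K_j$ then contains $H\cup\{j'\}$ (size $k+1$) — wait, it contains $(H\setminus\{i\})\cup\{j,j'\}$ of size $k+1$, contradicting $|K_j|=k+\delta-1$ whenever $\delta\leq 1$; for general $\delta$ I instead argue $K_j\ne K_{j'}$ cannot be ruled out this way, so the real point is: if $K_j\neq K_{j'}$ then $K_j\cap K_{j'}\supseteq H\setminus\{i\}$, a set of size $k-1$, contradicting (K2) which says $|K_j\cap K_{j'}|\leq k-2$. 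Hence at most one $j\in H'\setminus H$ can be bad together with the case $K_j=K_{j'}$ being handled by the same intersection bound — so in fact all bad candidates $j$ share the same $K\in\cK$ with $K\supseteq H\setminus\{i\}$, and then $K\supseteq (H\setminus\{i\})\cup\{j\}$ for each such $j$; if there were two bad $j,j'$ this forces $K\supseteq (H\setminus\{i\})\cup\{j,j'\}$ of size $k+1$, impossible since $|K|=k+\delta-1$ would require $\delta\geq 2$, and even then $K\supseteq(H\setminus\{i\})$ plus two extra elements of $H'\setminus H$, but we also need $H'\in\cH$, i.e. $H'\not\subseteq K$: since $H'=(H\setminus\{i\})\cup\{j,j',\dots\}\subseteq K$ would follow when $|H'\setminus H|=2$, this already contradicts $H'\in\cH$. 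For $|H'\setminus H|\geq 3$ I close the gap by noting $|K|=k+\delta-1$ and $\delta\leq k-3<k$, so a direct size count on $(H\setminus\{i\})\cup(H'\setminus H)\cap K$ together with (K2) finishes it.

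Concretely I would organize it as: let $B=\{j\in H'\setminus H : (H\setminus\{i\})\cup\{j\}\notin\cH\}$ be the bad set; for each $j\in B$ pick $K_j\in\cK$ containing $(H\setminus\{i\})\cup\{j\}$; show $K_j$ is independent of $j$ (any two would intersect in $\supseteq H\setminus\{i\}$, size $k-1>k-2$, contradicting (K2) unless equal), call it $K$; then $K\supseteq (H\setminus\{i\})\cup B$, so $|K|\geq (k-1)+|B|$, giving $|B|\leq \delta$; but also $K\not\supseteq H'$ because $H'\in\cH$, so $H'\setminus H\not\subseteq K$, hence $|H'\setminus H|\geq |B|+1$, and thus there is a good swap $j\in (H'\setminus H)\setminus B$. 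This is a one-paragraph argument once set up. The main obstacle — and the only place that needs care — is making sure the "same $K$" reduction is airtight and that I never secretly use (K1) or (K3), which are not hypotheses here; condition (K2) alone must carry the intersection bound, and the uniformity of $\cK$ is what makes $K_j$ well-defined. Everything else is bookkeeping with set sizes.
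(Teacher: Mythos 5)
Your final organized argument (bad set $B$, a single common $K\in\cK$ forced by (K2) since two distinct containing sets would share $H\setminus\{i\}$ of size $k-1$, and then $H'\in\cH$ preventing $H'\setminus H\subseteq K$, so a good swap exists) is correct and is essentially the paper's proof in contrapositive form; the paper assumes all swaps are bad and splits into the same two cases, so the ingredients are identical. Only a cosmetic slip: the parenthetical claim that a $k$-set lies in a \emph{unique} member of $\cK$ follows from (K2), not from uniformity alone, but you never actually use it, and the exploratory middle paragraph is superseded by your clean final version.
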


\proof
Consider two distinct hyperedges $H, H' \in \cH$.
We assume that \raf{e-exchangeaxiom}  does not holds for $H$ and $H'$, and
derive a contradiction.

If $|H \cap H'|=k-1$ then
\raf{e-exchangeaxiom} clearly holds, and hence, we have $|H \cap H'|\leq k-2$.
By our assumption, there exists an element $i \in H\setminus H'$ such that
any $j \in H' \setminus H$ satisfies $(H \setminus \{i\}) \cup \{j\} \not\in \cH$.
This means that  for any $j \in H' \setminus H$, we have $K_j \in \cK$ such that
$K_j \supseteq (H \setminus \{i\}) \cup \{j\}$.
We note that $H' \setminus H$ contains two elements $j,\ell$  that satisfy  $K_j\not=K_\ell$, since otherwise $K_j$ contains $H'$, a contradiction on its own.
Since for these indices $j$ and $\ell$, we have $ |K_j \cap K_\ell|\geq k-1$, we get a contradiction with (K2).
\qed

\begin{theorem}
If a $(k+\delta -1)$-uniform hypergraph $\cK$ satisfies {\rm (K1)}, {\rm (K2)}, and {\rm (K3)},  then  $\cH$, defined by \eqref{e-K}, is a JM and matroid hypergraph.
\end{theorem}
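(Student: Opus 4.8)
The plan is to combine the three immediately preceding lemmas with Theorem~\ref{theorem-matroid-1ac}. Specifically, Lemma~\ref{lemma-exmat2} shows that the hypothesis $(K2)$ alone guarantees that $\cH$ defined by \eqref{e-K} is a matroid hypergraph, and Lemma~\ref{lemma-exmat1} shows that the full set of hypotheses $(K1)$, $(K2)$, $(K3)$ guarantees that $\cH$ is minimal transversal-free. Therefore $\cH$ satisfies both hypotheses of Theorem~\ref{theorem-matroid-1ac}, namely it is a matroid hypergraph that is minimal transversal-free, and that theorem directly yields that $\cH$ is a JM hypergraph. The statement ``$\cH$ is a matroid hypergraph'' in the conclusion is then just Lemma~\ref{lemma-exmat2} restated.

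So the proof is essentially a two-line citation: first invoke Lemma~\ref{lemma-exmat2} to conclude that $\cH$ is a matroid hypergraph; then invoke Lemma~\ref{lemma-exmat1} to conclude that $\cH$ is minimal transversal-free; finally apply Theorem~\ref{theorem-matroid-1ac} to conclude that $\cH$ is JM. The only small point worth checking is that the standing assumption $0<\delta\le k-3$ (which, together with the size constraints $|S|=k-1$, $|W|=k+\delta+1-|R|$ etc.\ used in Lemma~\ref{lemma-exmat1}) is indeed in force; since the theorem statement inherits exactly the setup of equation~\eqref{e-K} and the surrounding paragraph, this is automatic. I do not anticipate any genuine obstacle here: all the real work has already been done in Lemmas~\ref{lemma-k2ae}, \ref{lemma-exmat1}, and \ref{lemma-exmat2}, and this theorem is purely a packaging statement.

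Here is the proof I would write.

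\medskip
\noindent\textbf{Proof.}
By Lemma~\ref{lemma-exmat2}, condition $(K2)$ implies that $\cH$ is a matroid hypergraph.
By Lemma~\ref{lemma-exmat1}, conditions $(K1)$, $(K2)$, and $(K3)$ together imply that $\cH$ is minimal transversal-free.
Hence $\cH$ is a matroid hypergraph that is minimal transversal-free, and Theorem~\ref{theorem-matroid-1ac} yields that $\cH$ is a JM hypergraph.
\qed

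\medskip
If one wanted the exposition to feel less terse, the natural expansion would be to recall explicitly which of $(K1)$--$(K3)$ is used by each lemma, and perhaps to remark that this theorem generalizes the Fano-plane example $\cH=\binom{V}{3}\setminus\cF$ on $V=\{1,\dots,7\}$ discussed just before Lemma~\ref{lemma-k2ae} (which corresponds to $k=3$, $\delta=1$, $\cK=\cF$); but strictly speaking nothing beyond the three-step argument above is needed.
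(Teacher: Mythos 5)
Your proof is correct and matches the paper's own argument, which likewise just combines Lemma~\ref{lemma-exmat1} (minimal transversal-freeness) with Lemma~\ref{lemma-exmat2} (matroid property) and concludes via Theorem~\ref{theorem-matroid-1ac}; you merely make the invocation of that theorem explicit where the paper leaves it implicit. No gaps.
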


\proof
It follows from Lemmas \ref{lemma-exmat1} and \ref{lemma-exmat2}. \qed

To complete this section, let us construct a hypergraph $\cK$ with the desired properties.

Let $\delta$ and $k$ be integers such that $0< \delta \leq k-3$.
Define $V$ by $V=W \cup U$, where $W=\{1, \dots, k+\delta-1\}$, and $U=\{1', \dots, (k+1)'\}$.
Note that $|V|=(k+\delta -1) + (k+1)=2k +\delta$.
Let
\[
\cK=\{W, \{1,\dots , \delta\} \cup \{1', \dots , (k-1)'  \},
\{\delta+1,\dots , 2\delta\} \cup \{3', \dots , (k+1)'  \}\}.
\]
It is not difficult to see that $\cK  \subseteq 2^V$
is a $(k+\delta -1)$-uniform hypergraph satisfying (K1), (K2), and (K3).

\subsection{JM Hypergraphs Arising from Graphs}\label{ss-BogPal}

In this section we use the standard terminology of graph theory,
 see e.g., \cite{Har69}. We shall consider simple undirected graphs, and
use their set of edges or set of vertices as the base set to define additional families of JM hypergraphs.

Given a graph $G=(U,E)$ and a subset $F\subseteq E$ of the edges, we denote by $d_F(u)$
the degree of vertex $u\in U$ with respect to the subgraph $(U,F)$.
In other words, $d_F(u)$ is the number of edges in $F$ that are incident with vertex $u$.
If $d_F(u)>0$ then we call $u$ a supporting vertex of subset $F$, and
we denote by $U(F)\subseteq U$ the set of supporting vertices of $F$, that is,
$U(F) ~=~ \{u\in U\mid d_F(u)>0\}$.

Given an integer $k<|E|$, let us define
\begin{eqnarray}
\cF_{e,c}(G,k) &=& \{ F\subseteq E\mid |F|=k, (U(F),F) \text{ is connected} \}\label{ec}\\
\cF_{v,c}(G,k) &=& \left\{ U(F)\subseteq U \left| \begin{array}{c}
F\subseteq E, |F|=k,\\ (U(F),F) \text{ is connected}
\end{array} \right. \right\}\label{vc}\\
\cF_{e,t}(G,k) &=& \{ F\subseteq E\mid |F|=k , (U(F),F) \text{ is a tree} \}\label{et}\\
\cF_{v,t}(G,k) &=& \left\{ U(F)\subseteq U \left| \begin{array}{c}
F\subseteq E, |F|=k,\\ (U(F),F) \text{ is a tree}
\end{array} \right.\right\}. \label{vt}
\end{eqnarray}

\begin{lemma}\label{l-connected-chain-D2}
If $G=(U,E)$ is a connected graph and $1\leq k<|E|$, then the hypergraphs $\cF_{e,c}(G,k)$, $\cF_{e,t}(G,k)$, $\cF_{v,c}(G,k)$, and $\cF_{v,t}(G,k)$ satisfy property {\rm (D3)}.
\end{lemma}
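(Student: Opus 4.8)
The plan is to verify property (D3) directly for each of the four hypergraph families, reducing all four cases to essentially one combinatorial observation about connected subgraphs of a connected graph. Recall that (D3) asks: for every subfamily $\cF$ of the hypergraph whose union of vertices (or edges) is not all of the ground set, there exist a member $F$ of $\cF$ and a member $S$ of the whole hypergraph such that $\emptyset\neq S\setminus F\subseteq (\text{ground set})\setminus V(\cF)$. For the edge-indexed families $\cF_{e,c}(G,k)$ and $\cF_{e,t}(G,k)$ the ground set is $E$; for the vertex-indexed families $\cF_{v,c}(G,k)$ and $\cF_{v,t}(G,k)$ it is $U$. First I would fix notation: given $\cF$, let $Y=V(\cF)$ be the union of its members (a subset of $E$ or of $U$ accordingly), and assume $Y\neq E$ (resp. $Y\neq U$).

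The key step is the following. Pick any $F\in\cF$; it is (the edge set of, resp. the vertex set of) a connected $k$-edge subgraph. Since $G$ is connected and $Y$ does not exhaust the ground set, I want to find one more edge $e\notin Y$ that can be "swapped into" $F$ so that the result is still a connected $k$-edge subgraph $S$ of the right type, with $S\setminus F$ consisting only of elements outside $Y$. Concretely: because $G$ is connected, there is an edge $e=uv\in E\setminus F$ incident to a vertex of the subgraph $(U(F),F)$; moreover, by walking outward along a path in $G$ from $U(F)$ toward an element of $E\setminus Y$ (which exists since $Y\neq E$, and even when $Y\subsetneq U$ one finds such a path since $G$ is connected and every edge lies in some $k$-edge connected subgraph for $k<|E|$), I can choose this attaching edge $e$ to lie outside $Y$. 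Now I delete a carefully chosen edge from $F$ — a non-cut edge of the subgraph $(U(F),F)$ if I want to preserve connectivity, or a leaf edge if I want to preserve the tree property — and add $e$, obtaining $S$ of size $k$, connected (resp. a tree), with $S\setminus F=\{e\}$ or $S\setminus F$ equal to $\{e\}$ together with possibly one isolated vertex newly supported. In the edge-indexed cases $S\setminus F=\{e\}\subseteq E\setminus Y$, giving (D3) immediately. In the vertex-indexed cases I must additionally ensure that the new vertex brought in lies outside $Y\subseteq U$; this is arranged by choosing $e$ to have an endpoint not in $Y$, which is again possible because $G$ is connected and $Y\neq U$ forces a boundary edge leaving $Y$ in the appropriate sense.

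The main obstacle I anticipate is the bookkeeping in the vertex-indexed families: there the "members" are vertex sets $U(F)$, and two different edge sets can give the same vertex set, so I must argue that removing a leaf edge and attaching a pendant edge at a boundary vertex genuinely changes the vertex set only by adding one vertex of $U\setminus Y$ and removing at most one vertex — and that the removed vertex, if any, causes no problem for the containment $S\setminus F\subseteq U\setminus V(\cF)$ because $S\setminus F$ only records \emph{added} vertices. I also need the edge $e$ and the leaf edge of $F$ to be distinct and genuinely available, which uses $k<|E|$ and connectivity of $G$; a small separate argument handles the degenerate situation where $(U(F),F)$ is a single edge ($k=1$) or where every attaching edge at $U(F)$ already lies in $Y$, which is ruled out by pushing the path-to-the-outside argument one step further along $G$. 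Once these cases are dispatched, (D3) holds for all four families, and the lemma follows.
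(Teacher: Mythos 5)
Your core idea --- swap one edge $e$ lying outside $V(\cF)$ into a member of $\cF$, deleting a cycle edge (to keep connectivity) or a leaf edge (to keep the tree property), so that $S\setminus F$ consists only of the newly added element --- is exactly the argument the paper uses, including handling all four families by the same reduction. But there is a genuine gap in how you locate $e$: property (D3) only asks for \emph{some} $F\in\cF$, whereas you fix an arbitrary $F\in\cF$ first and then claim the attaching edge at $U(F)$ can be chosen outside $Y=V(\cF)$, dismissing the exceptional case (``every attaching edge at $U(F)$ already lies in $Y$'') as being ``ruled out by pushing the path-to-the-outside argument one step further along $G$.'' That case is not ruled out, and the proposed patch fails. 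For example, let $G$ be the path on vertices $a,b,c,d,e$ with edges $ab,bc,cd,de$, let $k=2$, and let $\cF=\{\{ab,bc\},\{bc,cd\}\}\subseteq\cF_{e,c}(G,2)$, with your chosen $F=\{ab,bc\}$. Then $Y=\{ab,bc,cd\}\neq E$, yet every edge incident with $U(F)=\{a,b,c\}$ lies in $Y$; walking one step further along the path reaches $de$, which is no longer incident with $U(F)$ and hence cannot be swapped into $F$ without disconnecting the result. So for this fixed $F$ no admissible $S$ with $\emptyset\neq S\setminus F\subseteq E\setminus Y$ exists, and your argument stalls.

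The repair is to reverse the order of the two choices, which is precisely what the paper does: since $G$ is connected and $\bigcup_{F\in\cF}F\neq E$, there is an edge $e\notin\bigcup_{F\in\cF}F$ incident with the set $W$ of all vertices supported by members of $\cF$ (take any edge outside $Y$ and follow a path from it into $W$; the first edge entering $W$ works). Only then choose $F\in\cF$ supporting the attachment vertex $w$ of $e$; in the example this forces $F=\{bc,cd\}$ and yields $S=\{cd,de\}$. After this reordering, your swap step (delete an edge $f\neq e$ of a cycle of $F\cup\{e\}$ if one exists, otherwise a leaf edge $f\neq e$ of the tree $F\cup\{e\}$) is correct, and your bookkeeping for the vertex-indexed families --- choosing $e$ with an endpoint outside $V(\cF)$ and noting that $S\setminus F$ records only the added vertex, never the one possibly lost by removing $f$ --- goes through as well.
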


\proof
We are going to prove the statement for $\cF_{e,c}(G,k)$. For the others similar proofs work.

Assume that $\cF\subseteq \cF_{e,c}(G,k)$ is a subfamily such that $\bigcup_{F\in\cF}F\neq E$. Let us denote by $W$ the set of vertices incident to some of the sets in $\cF$, and let $e$ be an edge of $G$
incident with $W$
that does not belong to any of the sets in $\cF$. Such an edge $e$ exists by our assumption. Let us denote by $w\in W$ the vertex with which $e$ is incident (note that $e$ maybe incident with two vertices of $W$). Since $w\in W$, there is a set $F\in \cF$ such that $w\in U(F)$. We claim that there exists $f\in F$ such that $S=(F\setminus\{f\})\cup\{e\}\in \cF_{e,c}(G,k)$.
Namely, if $F\cup\{e\}$ contains a cycle then any $f\neq e$ of this cycle can be chosen, otherwise $F\cup \{e\}$ is a tree, and therefore it must have a leaf edge $f\neq e$.
Then the pair of sets $F$ and $S$ proves that property (D3) holds.
\qed

\begin{lemma}\label{l-connected-chain-D1}
If $G=(U,E)$ is a connected graph and $k<|E|$,
then the hypergraphs $\cF_{e,c}(G,k)$, $\cF_{e,t}(G,k)$ for $k\geq 2$ and
$\cF_{v,c}(G,k)$, $\cF_{v,t}(G,k)$ for $k\geq 1$ satisfy property {\rm (D1)} if they are minimal transversal free.
\end{lemma}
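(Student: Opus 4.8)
The plan is to prove property (D1) for all four families simultaneously by reducing to a common combinatorial statement about edge sets of a connected graph, and then handling the vertex-based families by noting that a chain of edge sets projects to a chain of their supporting vertex sets. Fix two hyperedges $H, H'$ in one of the edge-based families $\cF_{e,c}(G,k)$ or $\cF_{e,t}(G,k)$. Since property (D1) asks for a chain $H = H_0, H_1, \dots, H_p = H'$ with $|H_{i+1} \setminus H_i| \le 1$, $H_{i+1} \cap H_i \neq \emptyset$, and $H_i \subseteq H \cup H'$, the natural approach is an exchange argument analogous to the matroid case in Lemma \ref{lemma-matroid-1ac}: among all chains starting at $H$ with all members contained in $H \cup H'$, pick one whose last member $H_p$ maximizes $|H_p \cap H'|$ (equivalently minimizes $|H' \setminus H_p|$), and show that if $H' \setminus H_p \neq \emptyset$ then one can extend the chain by one step, contradicting maximality.

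The key step is the single-edge exchange: given $F \in \cF_{e,c}(G,k)$ (or the tree version) with $F \subseteq H \cup H'$ and $H' \setminus F \neq \emptyset$, find $e \in H' \setminus F$ and $f \in F$ such that $F' = (F \setminus \{f\}) \cup \{e\}$ is again in the family, lies in $H \cup H'$, overlaps $F$, and has $|F' \cap H'| = |F \cap H'| + 1$. First I would pick $e \in H' \setminus F$ that is incident to a vertex of $U(F)$; this is where connectivity of $G$ and, crucially, the hypothesis that the family is \emph{minimal transversal-free} must be used — minimal transversal-freeness rules out degenerate configurations (e.g. $k$-edge subgraphs that are vertex-disjoint components so that no such $e$ exists), guaranteeing $H$ and $H'$ live in a part of the graph where the supporting-vertex sets can be "walked" into each other. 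Once such an $e$ is found, adding $e$ to $F$ either creates a cycle — in which case delete any other edge $f$ of that cycle, keeping connectivity — or produces a tree with an extra edge, in which case delete a leaf edge $f \neq e$; in the tree family one must additionally check that $f$ can be chosen so $F'$ stays a tree (delete a leaf edge of $F \cup \{e\}$ other than $e$). In both cases $F' \subseteq F \cup \{e\} \subseteq H \cup H'$, $e \in F' \cap H'$, and since $F' \ni e$ but $F$ did not contain $e$, we have $|F' \cap H'| \ge |F \cap H'| + 1$, possibly having also lost an edge $f$ that lay in $H'$; to avoid that loss one chooses $f \in F \setminus H'$ whenever possible, which is feasible because $F \not\subseteq H'$ as long as $F \neq H'$ (and if $F = H'$ we are already done) — this requires a small argument that among the candidate edges to delete (cycle edges, or leaf edges) at least one lies outside $H'$ when $F \neq H'$, which follows since $F \cup \{e\}$ strictly contains $H'$ on the vertex set in question or by a counting argument on $|F \cap H'|$ versus $|H'| = k$.

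For the vertex-based families $\cF_{v,c}(G,k)$ and $\cF_{v,t}(G,k)$, the plan is to run the same exchange on underlying edge sets: given vertex sets $U(F), U(F') = H, H'$ realized by connected (resp. tree) edge sets $F, F'$, produce an edge chain and observe that passing to supporting-vertex sets turns $|F_{i+1} \setminus F_i| \le 1$ and $F_{i+1} \cap F_i \neq \emptyset$ into $|U(F_{i+1}) \setminus U(F_i)| \le 1$ and $U(F_{i+1}) \cap U(F_i) \neq \emptyset$ — removing or adding one edge changes the supporting-vertex set by at most one vertex, and a shared edge forces shared vertices. One must also check $U(F_i) \subseteq U(F) \cup U(F')$, which holds since $F_i \subseteq F \cup F'$. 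The main obstacle I anticipate is pinning down exactly where minimal transversal-freeness enters: it is needed to guarantee that the "reachable" edge $e$ incident to $U(F)$ and lying in $H' \setminus F$ actually exists at each stage — without it, $H$ and $H'$ could sit in different pieces of the graph and no chain between them exists. I would isolate this as a preliminary observation (that in a minimal transversal-free such hypergraph the graph restricted to $H \cup H'$ is "connected enough" to support the walk), prove it by the same kind of induced-subhypergraph/transversal argument used in Lemma \ref{l3}, and then the exchange argument proceeds cleanly.
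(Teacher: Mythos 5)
Your overall strategy---grow a chain inside $H\cup H'$ by single-edge exchanges and argue by extremality, in analogy with the matroid case of Lemma \ref{lemma-matroid-1ac}---is the same as the paper's, but your progress measure is different, and that is exactly where the gap is. You take a chain endpoint $F$ maximizing $|F\cap H'|$ and claim one can always choose $e\in H'\setminus F$ incident with $U(F)$ and then delete a removable edge $f\notin H'$ of $F\cup\{e\}$, so that $|F'\cap H'|$ strictly increases. The justification you sketch (``$F\cup\{e\}$ strictly contains $H'$ on the vertex set in question, or a counting argument on $|F\cap H'|$ versus $k$'') does not hold up: $H'$ need not be contained in $F\cup\{e\}$ at all, and no cardinality count rules out the configuration in which every edge of $F\setminus H'$ is an interior cut edge of $F\cup\{e\}$ while all removable edges (leaf edges, or edges on the unique cycle) other than $e$ lie in $H'$. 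In that situation every admissible exchange deletes an edge of $H'$, the intersection size does not grow, and your maximality argument yields no contradiction. Minimal transversal-freeness, which you invoke vaguely at this point, is not actually used in your step and does not obviously exclude this case; it is needed (once) only to guarantee that $U(H)\cap U(H')\neq\emptyset$, i.e.\ that the walk can start when $F\cap H'=\emptyset$.

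The paper's proof avoids precisely this trap by measuring progress not by $|F\cap H'|$ but by $d(F,H')$, the size of a largest connected component of $F\cap H'$. It picks a maximum component $K$ of $F\cap H'$, an edge $e\in H'\setminus F$ incident with $K$ (or with $U(F)$ if $K=\emptyset$), and then some $f\in F\setminus K$ whose removal keeps $F\cup\{e\}$ connected (contract $K\cup\{e\}$ and delete a cycle edge, otherwise a leaf edge). The new set $D=(F\cup\{e\})\setminus\{f\}$ contains the connected set $K\cup\{e\}\subseteq D\cap H'$, so $d(D,H')>d(F,H')$ \emph{even when the deleted edge $f$ lies in $H'$}; this is what makes the extremal argument close. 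Your reduction of the vertex families $\cF_{v,c}$ and $\cF_{v,t}$ to an edge chain by passing to supporting vertex sets is fine (one edge changes $U(F)$ by at most one vertex, a common edge forces common vertices, and $F_i\subseteq F\cup F'$ gives $U(F_i)\subseteq U(F)\cup U(F')$), but it inherits the same unproven exchange step, so as written the proposal has a genuine gap; replacing your potential by the largest-component potential (or supplying a real proof that a removable edge outside $H'$ always exists) is needed to repair it.
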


\proof
We are going to prove the statement for $\cF_{e,c}(G,k)$. For the others similar proofs work.
Let us consider arbitrary hyperedges $A,B \in \cF_{e,c}(G,k)$. Let us notice first that since $G$ is connected and $\cF_{e,c}(G,k)$ is minimal transversal free, $A\cup B$ induces a connected subgraph $G[A\cup B]$ of $G$. Consequently, $U(A)\cap U(B)\neq\emptyset$.
Let us denote by
$d(A,B)$ the size of a maximum connected component in $A\cap B$.
We claim that if $A\neq B$ then there exists a $D \in \cF_{e,c}(G,k)$ such that
$A \cap D \neq \emptyset$, $|D\setminus A|=1$, $D\subseteq A\cup B$, and $d(A,B) < d(D,B)$. By repeatedly applying this claim, we can construct a chain from $A$ to $B$.

Choose a maximum connected component
$K \subseteq A\cap B$. Choose $e \in B\setminus A$ incident with $K$ and $A$.
Such an edge exists since $B$ is connected, $B\neq A$, and $U(A)\cap U(B)\neq\emptyset$. Then there exists $f \in A\setminus K$ such that $D=(A\cup\{e\}\setminus\{f\}$ is connected.
To see this contract edges $K \cup \{e\}$. If $A\setminus K$ contains a cycle after this contraction, then any edge of this cycle can be chosen.
Otherwise we choose a leaf edge $f \in A\setminus K$.

Thus, the set $D$ satisfies the claim.
\qed

\begin{theorem}\label{c-t-ad3}
Let $G=(U,E)$ be a connected graph. If $\cF_{e,*}(G,k)$ for $k\geq 2$ is minimal transversal-free, where $*\in \{c,t\}$, then it is JM. Similarly, if $\cF_{v,*}(G,k)$ for $k\geq 1$ is minimal transversal-free, where $*\in \{c,t\}$, then it is JM.
\end{theorem}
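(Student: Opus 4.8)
The plan is to derive Theorem \ref{c-t-ad3} directly from Theorem \ref{t2} by verifying its three hypotheses (A1), (D1), and (D3) for each of the four hypergraph families $\cF_{e,c}(G,k)$, $\cF_{e,t}(G,k)$, $\cF_{v,c}(G,k)$, $\cF_{v,t}(G,k)$. Property (A1) — minimal transversal-freeness — is assumed outright in the statement, so nothing needs to be done there. Property (D3) is exactly the content of Lemma \ref{l-connected-chain-D1}'s companion, Lemma \ref{l-connected-chain-D2}, which already establishes (D3) for all four families under the single hypothesis that $G$ is connected and $1\le k<|E|$; so that step is also immediate by citation. Finally, property (D1) is supplied by Lemma \ref{l-connected-chain-D1}, which gives (D1) for $\cF_{e,*}(G,k)$ with $k\ge 2$ and for $\cF_{v,*}(G,k)$ with $k\ge 1$, precisely under the hypothesis that the hypergraph in question is minimal transversal-free — which is exactly what Theorem \ref{c-t-ad3} assumes.

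So the proof is essentially a one-paragraph assembly: fix a connected graph $G=(U,E)$ and an integer $k$ in the stated range; suppose $\cF_{e,*}(G,k)$ (resp.\ $\cF_{v,*}(G,k)$) is minimal transversal-free. Then (A1) holds by hypothesis; (D3) holds by Lemma \ref{l-connected-chain-D2}; (D1) holds by Lemma \ref{l-connected-chain-D1} (whose hypothesis of minimal transversal-freeness is met); and then Lemma \ref{l10} upgrades (D3) to (D2). With (A1), (D1), and (D2) in hand, Theorem \ref{t2} concludes that the hypergraph is JM. I would write exactly this chain of implications, being careful to note that the edge-cases match: Lemma \ref{l-connected-chain-D2} needs $k\ge 1$ and $k<|E|$, Lemma \ref{l-connected-chain-D1} needs $k\ge 2$ in the edge-version and $k\ge 1$ in the vertex-version, and these are precisely the ranges stated in Theorem \ref{c-t-ad3}.

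There is really no substantial obstacle in the proof of this particular theorem itself — the work has all been front-loaded into the three lemmas. If anything, the only point requiring a moment's care is checking that the hypothesis "$\cF$ is minimal transversal-free" in Theorem \ref{c-t-ad3} is legitimately available to feed into Lemma \ref{l-connected-chain-D1}, since that lemma's conclusion (D1) is conditional on precisely this property; one must make sure there is no circularity, and there isn't, because Lemma \ref{l-connected-chain-D1}'s proof only uses minimal transversal-freeness to guarantee that $A\cup B$ induces a connected subgraph, a structural fact, not anything about the SG function. So the concrete plan is:

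\begin{quote}
\textbf{Proof.} Fix a connected graph $G=(U,E)$ and an integer $k$ with $1\le k<|E|$. Suppose $\cF_{e,*}(G,k)$ with $k\ge 2$, or $\cF_{v,*}(G,k)$ with $k\ge 1$, is minimal transversal-free; call this hypergraph $\cH$. Then $\cH$ satisfies (A1) by hypothesis. By Lemma \ref{l-connected-chain-D2}, $\cH$ satisfies (D3), hence by Lemma \ref{l10} it satisfies (D2). Since $\cH$ is minimal transversal-free, Lemma \ref{l-connected-chain-D1} applies and gives (D1). Thus $\cH$ satisfies (A1), (D1), and (D2), so by Theorem \ref{t2} it is JM. \qed
\end{quote}

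I would keep it at roughly that length, perhaps adding one sentence reminding the reader that the four families were all treated uniformly in the two cited lemmas (each lemma's proof was written out for $\cF_{e,c}$ with the remark that the other three go through identically), so no separate argument per family is required here.
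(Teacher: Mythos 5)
Your proof is correct and follows essentially the same route as the paper: the paper's own proof of Theorem \ref{c-t-ad3} is exactly the assembly you describe — (A1) by hypothesis, (D1) and (D3) from Lemmas \ref{l-connected-chain-D1} and \ref{l-connected-chain-D2}, and then Theorem \ref{t2} — with the (D3)$\Rightarrow$(D2) upgrade via Lemma \ref{l10} left implicit (it is covered by the remark preceding that lemma), which you have merely made explicit.
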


\proof
Property (A1) follows by our assumption, while properties (D1) and (D3) follow by Lemmas \ref{l-connected-chain-D1} and \ref{l-connected-chain-D2}. Thus, the statement is implied by Theorem \ref{t2}.
\qed

\medskip

There are several infinite families of graphs for which we can apply Theorem \ref{c-t-ad3}.
We use standard graph theoretical notation, see e.g., \cite{Har69}.  We denote by $C_n$ the simple cycle on $n$ vertices, $K_{a,b}$ the complete bipartite graph with $a$ and $b$ vertices in the two classes, etc.

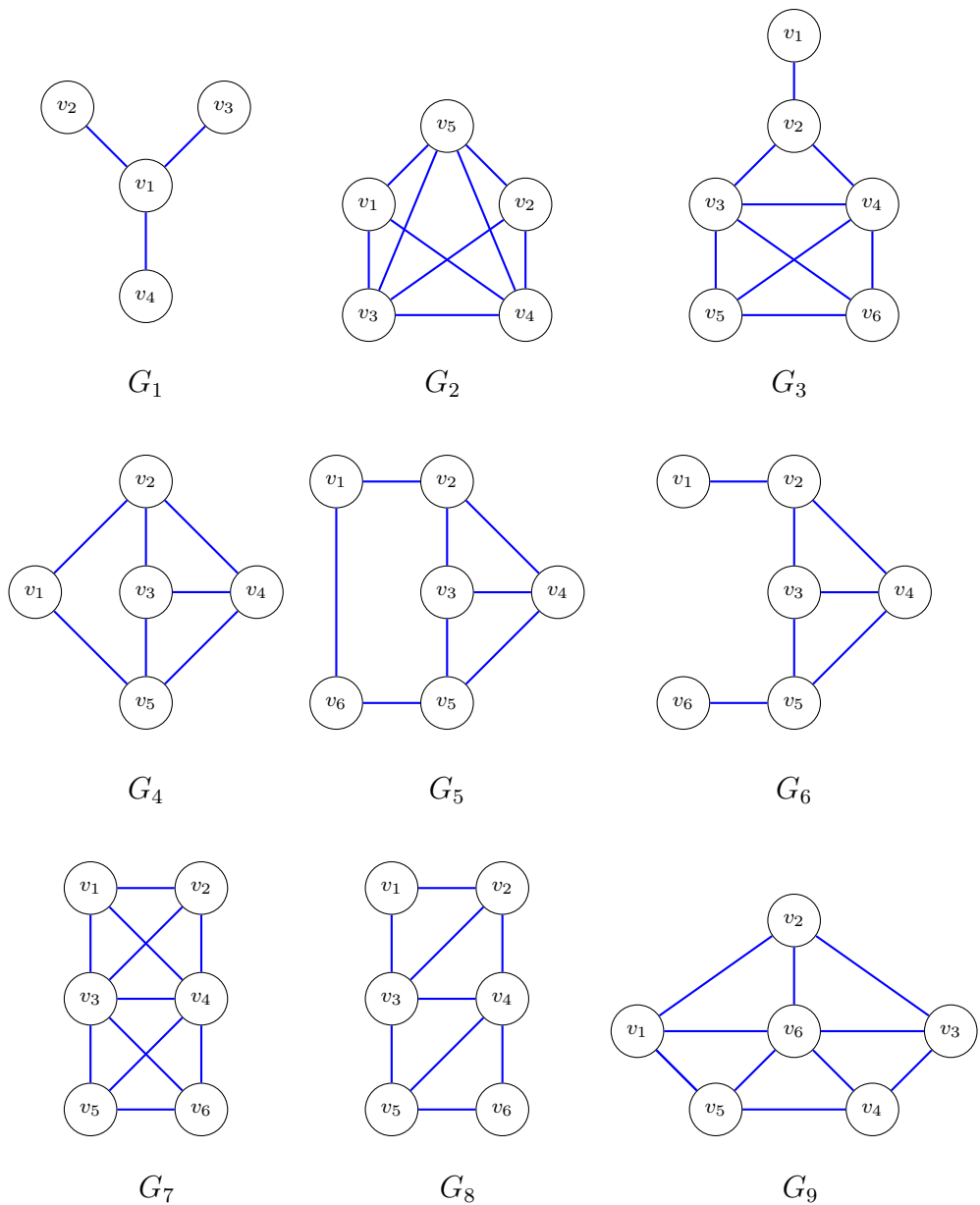
\begin{figure}[htbp]
\centering
\begin{tabular}{ccc}
\begin{tikzpicture}[node distance=1.5cm,scale=1]
\node[draw,circle] (1) {\tiny $v_1$};
\node[draw,circle,above left of = 1] (2){\tiny $v_2$};
\node[draw,circle,above right of = 1] (3) {\tiny $v_3$};
\node[draw,circle,below of = 1] (4) {\tiny $v_4$};

\node[below = 0.5cm of 4] {$G_1$};

\foreach \x/\y in {1/2,1/3,1/4} \draw[blue,thick] (\x) -- (\y);
\end{tikzpicture}
&
\begin{tikzpicture}[node distance=1.5cm,scale=1]
\node[draw,circle] (1) {\tiny $v_1$};
\node[draw,circle,below of = 1] (3){\tiny $v_3$};
\node[draw,circle,above right of = 1] (5) {\tiny $v_5$};
\node[draw,circle,below right of = 5] (2) {\tiny $v_2$};
\node[draw,circle,below of = 2] (4) {\tiny $v_4$};

\node[below right =0.5cm of 3] {$G_2$};

\foreach \x/\y in {1/3,1/4,1/5,2/3,2/4,2/5,3/4,3/5,4/5} \draw[blue,thick] (\x) -- (\y);
\end{tikzpicture}
&
\begin{tikzpicture}[node distance=1.5cm,scale=1]
\node[draw,circle] (1) {\tiny $v_1$};
\node[draw,circle,below =0.5cm of 1] (2){\tiny $v_2$};
\node[draw,circle,below left of = 2] (3) {\tiny $v_3$};
\node[draw,circle,below right of = 2] (4) {\tiny $v_4$};
\node[draw,circle,below of = 3] (5) {\tiny $v_5$};
\node[draw,circle,below of = 4] (6) {\tiny $v_6$};

\node[below right =0.5cm of 5] {$G_3$};

\foreach \x/\y in {1/2,2/3,2/4,3/4,3/5,3/6,4/5,4/6,5/6} \draw[blue,thick] (\x) -- (\y);
\end{tikzpicture}
\\*[5mm]
\begin{tikzpicture}[node distance=1.5cm,scale=1]
\node[draw,circle] (1) {\tiny $v_1$};
\node[draw,circle,right of = 1] (3){\tiny $v_3$};
\node[draw,circle,above of = 3] (2) {\tiny $v_2$};
\node[draw,circle,below of = 3] (5) {\tiny $v_5$};
\node[draw,circle,right of = 3] (4) {\tiny $v_4$};

\node[below =0.5cm of 5] {$G_4$};

\foreach \x/\y in {1/2,1/5,2/3,2/4,3/4,3/5,4/5} \draw[blue,thick] (\x) -- (\y);
\end{tikzpicture}
&
\begin{tikzpicture}[node distance=1.5cm,scale=1]
\node[draw,circle] (1) {\tiny $v_1$};
\node[draw,circle,right of = 1] (2) {\tiny $v_2$};
\node[draw,circle,below of = 2] (3) {\tiny $v_3$};
\node[draw,circle,below of = 3] (5) {\tiny $v_5$};
\node[draw,circle,left of = 5] (6) {\tiny $v_6$};
\node[draw,circle,right of = 3] (4){\tiny $v_4$};

\node[below =0.5cm of 5] {$G_5$};

\foreach \x/\y in {1/2,1/6,2/3,2/4,3/4,3/5,4/5,5/6} \draw[blue,thick] (\x) -- (\y);
\end{tikzpicture}
&
\begin{tikzpicture}[node distance=1.5cm,scale=1]
\node[draw,circle] (1) {\tiny $v_1$};
\node[draw,circle,right of = 1] (2) {\tiny $v_2$};
\node[draw,circle,below of = 2] (3) {\tiny $v_3$};
\node[draw,circle,below of = 3] (5) {\tiny $v_5$};
\node[draw,circle,left of = 5] (6) {\tiny $v_6$};
\node[draw,circle,right of = 3] (4) {\tiny $v_4$};

\node[below =0.5cm of 5] {$G_6$};

\foreach \x/\y in {1/2,2/3,2/4,3/4,3/5,4/5,5/6} \draw[blue,thick] (\x) -- (\y);
\end{tikzpicture}
\\*[5mm]
\begin{tikzpicture}[node distance=1.5cm,scale=1]
\node[draw,circle] (1) {\tiny $v_1$};
\node[draw,circle,right of = 1] (2) {\tiny $v_2$};
\node[draw,circle,below of = 1] (3) {\tiny $v_3$};
\node[draw,circle,right of = 3] (4) {\tiny $v_4$};
\node[draw,circle,below of = 3] (5) {\tiny $v_5$};
\node[draw,circle,right of = 5] (6) {\tiny $v_6$};

\node[below right = 0.5cm and 0.25cm of 5] {$G_7$};

\foreach \x/\y in {1/2,1/3,1/4,2/3,2/4,3/4,3/5,3/6,4/5,4/6,5/6} \draw[blue,thick] (\x) -- (\y);
\end{tikzpicture}
&
\begin{tikzpicture}[node distance=1.5cm,scale=1]
\node[draw,circle] (1) {\tiny $v_1$};
\node[draw,circle,right of = 1] (2) {\tiny $v_2$};
\node[draw,circle,below of = 1] (3) {\tiny $v_3$};
\node[draw,circle,right of = 3] (4) {\tiny $v_4$};
\node[draw,circle,below of = 3] (5) {\tiny $v_5$};
\node[draw,circle,right of = 5] (6) {\tiny $v_6$};

\node[below right = 0.5cm and 0.25cm of 5] {$G_8$};

\foreach \x/\y in {1/2,1/3,2/3,2/4,3/4,3/5,4/5,4/6,5/6} \draw[blue,thick] (\x) -- (\y);
\end{tikzpicture}
&
\begin{tikzpicture}[node distance=1.5cm,scale=1]
\node[draw,circle] (1) {\tiny $v_1$};
\node[draw,circle,below right of = 1] (5) {\tiny $v_5$};
\node[draw,circle,above right of = 5] (6) {\tiny $v_6$};
\node[draw,circle,below right of = 6] (4) {\tiny $v_4$};
\node[draw,circle,above right of = 4] (3) {\tiny $v_3$};
\node[draw,circle,above of = 6] (2) {\tiny $v_2$};

\node[below right = 0.5cm and 0.5cm of 5] {$G_9$};

\foreach \x/\y in {1/2,1/5,1/6,2/3,2/6,3/4,3/6,4/5,4/6,5/1,5/6} \draw[blue,thick] (\x) -- (\y);
\end{tikzpicture}
\end{tabular}
\caption{The nine forbidden induced subgraphs characterizing line graphs, see \cite{Bei70}.\label{fig-1}}
\end{figure}

\begin{description}
\item[Circulant hypergraphs:] For any given value of $k\geq 2$,  it is easy to see that the graphs $C_{2k}$ and $C_{2k+1}$ yield JM hypergraphs with all four definitions.
In fact, they are isomorphic families for both $C_{2k}$ and $C_{2k+1}$.

\item[Additional self-dual matroids:] Graphs $K_{2,k}$ are also good example for $k\geq 2$ with both definitions \eqref{ec} and \eqref{et}. Interestingly, both hypergraphs are self-dual matroids, and they are not isomorphic with one another for $k\geq 4$.

\item[Trees:] For any $k\geq 2$ the following subfamily of trees on $k^2+k$ edges provide good examples with both definitions involving edge subsets.
Let $T_i$, $i=1,\dots ,k+1$ be an arbitrary trees of $k$ edges each on distinct sets of vertices, and let $v_i$ be a leaf vertex of $T_i$ for all $i=1,\dots ,k+1$. Then we can get a tree $T$ by identifying these leaf vertices. $T$ has $k^2+k$ edges, and the family $\cF_{e,c}(T,k)$ is minimal transversal-free. (Note that definitions \eqref{ec} and \eqref{et} yield isomorphic hypergraphs in this case.)

\item[Star of cliques:] Another example is a graph $G$ formed by $k+1$ cliques on $k$ vertices each, joined by one-one edges to a common root vertex. In this case only definition \eqref{et} yields a hypergraph that is, $\cF_{e,t}(G,k)$ minimal transversal-free and has $(k+1)(\binom{k}{2}+1)$ vertices.

\item[Petersen:] Finally, a singular example is provided by the Petersen graph $P$ for which the family $\cF_{e,c}(P,7)$ is minimal transversal-free.
\end{description}

In Section \ref{s-size}, we show that the number of JM hypergraphs defined by  \eqref{ec}, \eqref{vc},  \eqref{et}, and \eqref{vt} is bounded by a function of $k$.

\subsection{JM Graphs}
\label{sec-jmgraph1ac}

In this section we provide a complete characterization of JM {\em graphs}, i.e., $2$-uniform JM hypergraphs.
Note that
\begin{equation}
\label{eq-erty1}
E=\cF_{v,c}(G,1)=\cF_{v,t}(G,1)
\end{equation}
holds for any graph $G=(V,E)$, which implies the following result.

\begin{theorem}
\label{th-jmgraph1w}
A graph $G$ is  JM  if and only if it is connected and minimal transversal-free.
\end{theorem}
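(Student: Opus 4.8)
The plan is to derive the statement from the machinery already built, in both directions. For the "only if" direction, there is essentially nothing new to prove: if $G$ (viewed as the $2$-uniform hypergraph $E$) is JM, then Lemma \ref{l3} gives that $E$ is minimal transversal-free, and Lemma \ref{necessaryconditions} gives that $E$ is connected. So I would simply cite these two lemmas.

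For the "if" direction, the key observation is the identity \eqref{eq-erty1}, namely $E=\cF_{v,c}(G,1)=\cF_{v,t}(G,1)$: a single-edge subgraph is automatically a tree and its vertex set is just the pair of endpoints. Thus the graph $G$, as a hypergraph on its vertex set, is literally one of the hypergraphs $\cF_{v,*}(G,k)$ with $k=1$. Theorem \ref{c-t-ad3} then applies directly: for $*\in\{c,t\}$ and $k\ge 1$, if $\cF_{v,*}(G,k)$ is minimal transversal-free (over a connected base graph $G$), then it is JM. Since we are assuming $G$ is connected and minimal transversal-free, both hypotheses of Theorem \ref{c-t-ad3} are met with $k=1$, and we conclude that $E$ is JM.

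I should double-check that the $k=1$ case is genuinely covered by the supporting lemmas behind Theorem \ref{c-t-ad3}: Lemma \ref{l-connected-chain-D2} is stated for $1\le k<|E|$, so property (D3) holds; Lemma \ref{l-connected-chain-D1} explicitly allows $k\ge 1$ for the vertex-set versions $\cF_{v,c}$ and $\cF_{v,t}$, so property (D1) holds whenever the hypergraph is minimal transversal-free; and property (A1) is exactly our hypothesis. One also needs $k<|E|$, i.e., $|E|\ge 2$; but if $|E|=1$ the single edge is a transversal of itself, so $G$ would not be transversal-free, and the hypothesis already excludes this degenerate case. Hence Theorem \ref{t2} (equivalently Theorem \ref{c-t-ad3}) applies.

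The only potential subtlety — and the part I would state carefully — is that "minimal transversal-free" for the hypergraph $E$ means that $E$ itself has no edge meeting every edge, while every nonempty proper induced subhypergraph $E_S$ does contain such an edge. In graph terms: no vertex pair $\{u,v\}$ is incident to all edges (equivalently, $G$ has no vertex of degree $|E|$ and no two vertices jointly covering all edges — so $G$ is not a star and not a triangle-with-pendants-type small graph), while deleting any vertex set creates such a dominating edge. This is precisely the condition fed into Theorem \ref{c-t-ad3}, so no translation issue arises; I would just remark that the combinatorial content of the characterization (the explicit finite list of JM graphs promised in the introduction) is worked out in the remainder of the section, and this theorem is the structural statement underpinning it. There is no real obstacle here — the theorem is a clean specialization of Theorem \ref{c-t-ad3} plus Lemmas \ref{l3} and \ref{necessaryconditions} — so the proof is short.
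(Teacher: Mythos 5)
Your proposal is correct and follows exactly the paper's own argument: necessity via Lemmas \ref{necessaryconditions} and \ref{l3}, and sufficiency via the identity \eqref{eq-erty1} together with Theorem \ref{c-t-ad3} applied with $k=1$. Your extra check that the $k=1$, $|E|\geq 2$ case is genuinely covered by the supporting lemmas is a sensible (and harmless) addition, but the route is the same.
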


\proof
The necessity follows from Lemmas \ref{necessaryconditions} and \ref{l3},
and the sufficiency follows from Theorem \ref{c-t-ad3} and \raf{eq-erty1}.
\qed

In the sequel we characterize all connected minimal transversal-free graphs.

\begin{lemma}\label{jmgarelg}
If a graph $G=(V,E)$ is  connected and minimal transversal-free, then it is the line graph of a simple graph.
\end{lemma}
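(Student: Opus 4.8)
The plan is to invoke Beineke's forbidden-subgraph characterization of line graphs \cite{Bei70}: a graph is the line graph of a simple graph if and only if it contains none of the nine graphs $G_1,\dots,G_9$ of Figure \ref{fig-1} as an induced subgraph. Since for a $2$-uniform hypergraph $\cH$ the induced subhypergraph $\cH_S$ is nothing but the induced subgraph $G[S]$, it suffices to show that a connected minimal transversal-free graph $G=(V,E)$ contains none of $G_1,\dots,G_9$ as an induced subhypergraph.

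For the eight graphs $G_2,\dots,G_9$ a single uniform remark suffices. One verifies directly (a finite list of elementary checks on fixed small graphs) that each $G_i$ with $2\le i\le 9$ is transversal-free, yet each of them has a nonempty proper induced subhypergraph that is still transversal-free; for instance $G_2=K_5-e$ minus one of its degree-$4$ vertices is $K_4$, which is transversal-free, and a single well-chosen vertex deletion works for $G_3,\dots,G_9$ as well. Hence none of $G_2,\dots,G_9$ is itself minimal transversal-free. Consequently, if $G$ contained some $G_i$ with $i\ge 2$ as an induced subhypergraph on a vertex set $S$, then $S=V$ would be impossible, as $G$ is minimal transversal-free while $G_i$ is not; and $S\subsetneq V$ would be impossible too, since by the definition of minimal transversal-freeness every nonempty proper induced subhypergraph of $G$ fails to be transversal-free, whereas $G_i$ is transversal-free.

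It remains to exclude $G_1=K_{1,3}$, i.e.\ to show that $G$ is claw-free; this is the main obstacle, precisely because $K_{1,3}$ is \emph{not} transversal-free and so the previous paragraph's reasoning does not apply. Suppose $G$ has an induced claw with centre $v$ and leaves $a,b,c$ (pairwise nonadjacent, each adjacent to $v$). For each leaf, say $a$, the subhypergraph $G-a$ is a nonempty proper induced subhypergraph (it still contains $\{v,b\}$), hence not transversal-free; fix an edge $e_a$ of $G-a$ meeting every edge of $G-a$. Since $e_a$ meets both $\{v,b\}$ and $\{v,c\}$ while $b\not\sim c$, we get $v\in e_a$, say $e_a=\{v,w_a\}$. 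On the other hand $e_a$ cannot be a transversal of $G$ (as $G$ is transversal-free), so there is an edge $f_a=\{a,z_a\}$ incident to $a$ that $e_a$ does not meet; then $z_a\notin e_a$, and $z_a\notin\{b,c\}$ since $a\not\sim b,c$. Arguing symmetrically gives $e_b=\{v,w_b\}$, $f_b=\{b,z_b\}$ and $e_c=\{v,w_c\}$, $f_c=\{c,z_c\}$ with the analogous properties. Now $f_a$ is an edge of $G-b$, so $e_b$ meets it; as $v\notin f_a$, this forces $w_b\in\{a,z_a\}$, and $w_b=a$ is impossible because $\{v,a\}$ would then have to meet the edge $f_c$ of $G-b$, which it does not. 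Hence $e_b=\{v,z_a\}$, and the same argument applied to $c$ gives $e_c=\{v,z_a\}$. Finally, transversal-freeness of $G$ forces the edge $e_b=\{v,z_a\}$ to have a disjoint edge; this edge must be incident to $b$ (otherwise it is an edge of $G-b$ not met by $e_b$), say $\{b,t\}$, with $t\notin\{v,z_a\}$ and, since $b\not\sim c$, also $t\neq c$. Then $\{b,t\}$ is an edge of $G-c$, so it is met by $e_c=\{v,z_a\}$ — but $\{v,z_a\}\cap\{b,t\}=\emptyset$, a contradiction. Therefore $G$ contains no induced claw.

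Combining the two cases, $G$ has none of $G_1,\dots,G_9$ as an induced subgraph, so by Beineke's theorem $G$ is a line graph of a simple graph. The only genuinely delicate point is the claw-exclusion step: for $G_2,\dots,G_9$ the minimality of $G$ finishes things off once transversal-freeness of a suitable induced subgraph has been checked, whereas for $K_{1,3}$ one must play off transversal-freeness of $G$ against the transversal edges guaranteed by minimality in the three subhypergraphs $G-a$, $G-b$, $G-c$.
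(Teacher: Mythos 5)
Your proposal is correct and follows essentially the same route as the paper: invoke Beineke's characterization \cite{Bei70}, dispose of $G_2,\dots,G_9$ by playing the minimality of $G$ against transversal-free (proper) induced subgraphs, and give a separate ad hoc argument excluding the claw by combining the transversal edges that minimality supplies in leaf-deleted subgraphs with the disjoint edges that transversal-freeness of $G$ supplies. Your claw argument is organized a bit differently (and arguably more cleanly): you use all three deletions $G-a$, $G-b$, $G-c$ symmetrically to pin down $e_b=e_c=\{v,z_a\}$, whereas the paper deletes a single leaf and case-splits on whether the resulting transversal edge lies in the claw; both are valid. One small slip in an illustrative aside: $G_2=K_5-e$ minus a degree-$4$ vertex is $K_4-e$, which is \emph{not} transversal-free (its edge joining the two degree-$3$ vertices meets every edge); to get $K_4$ you must delete a degree-$3$ vertex, i.e.\ an endpoint of the missing edge. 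The underlying claim you need for $G_2,\dots,G_9$ is nonetheless true, and the paper verifies it by exhibiting $C_4$, $C_5$, $K_4$, or $2K_2$ as a proper induced subgraph in each case.
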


\proof
Let us indirectly assume that $G$ is not a line graph. Then $G$ must contain one of the $9$ forbidden induced subgraphs shown in \cite{Bei70}, see Figure \ref{fig-1}. We claim that none of these $9$ graphs can be an induced subgraph of $G$, since we assumed that $G$ is minimal transversal-free.
For this, note that in each of the graphs $G_i$, $i=2,\dots ,9$ contains as a proper induced subgraph at least one of $C_4$, $C_5$, $K_4$, or $2K_2$. In all cases we do not have an edge that would intersect all others.

We claim  that  the claw $G_1$ cannot be an  induced subgraph of $G$.
Since $G_1$ is not transversal-free, we assume that $G_1$ is a proper induced subgraph of $G$, and derive a contradiction.
Remove from $G$ a leaf of the claw, say $v_2$.
Then $G\setminus v_2$ has an intersecting edge $e$, i.e.,
an edge $e$ in $G\setminus v_2$ intersects all edges in $G\setminus v_2$.
We note that $e$ is incident with the center of the claw $v_1$.

Suppose that $e$ is an edge of the claw, say $e=(v_1,v_3)$. Then $v_4$ is a vertex of degree $1$ in $G$. Let us denote by $e'$ the intersecting edge in $G\setminus v_4$.
 The edge $e'$ must be incident with the center $v_1$, and hence, $e'$ is an intersecting edge of $G$, which contradicts that $G$ is transversal-free.

Suppose now that $e$ is not contained in the claw, say $e=(v_1,v_5)$ where $v_5$ is not in $G_1$. If $v_3$ or $v_4$ is of degree $1$ then by the same argument $G$ has an intersecting edge. Otherwise $G$ contains edge $e_1=(v_3,v_5)$. Since $G$ is transversal-free, there must exist an edge $e_2$ disjoint from $e$ through $v_2$, say $e_2=(v_2,v_6)$. Consider again $G \setminus v_4$. By our assumption it has an intersecting edge $e_3$. As we know, $e_3$ is incident with $v_1$. However, no such edge can intersect both $e_1$ and $e_2$. This contradicts that $e_3$ is an intersecting edge in $G\setminus v_4$.
\qed


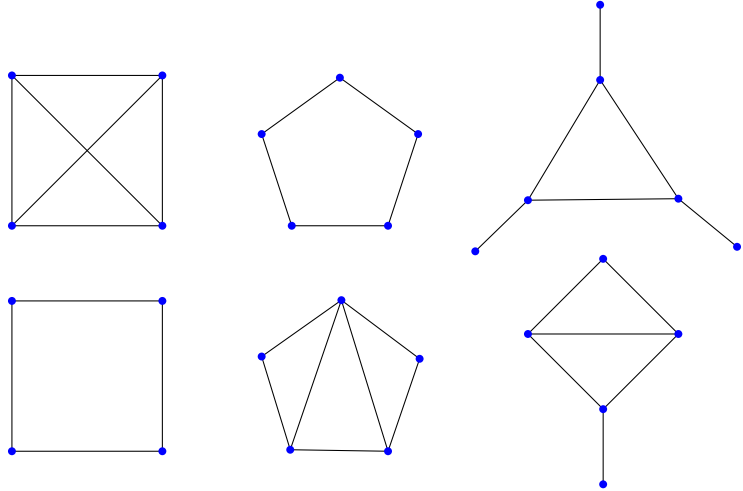
\begin{figure}[htbp]
\centering
\definecolor{qqqqff}{rgb}{0,0,1}
\begin{tikzpicture}[line cap=round,line join=round,>=triangle 45,x=1.0cm,y=1.0cm]
\clip(0.2,0.48) rectangle (11.37,7.06);
\draw (1,6)-- (3,6);
\draw (3,4)-- (1,4);
\draw (1,6)-- (1,4);
\draw (3,6)-- (3,4);
\draw (3,4)-- (1,6);
\draw (3,6)-- (1,4);
\draw (1,3)-- (3,3);
\draw (1,3)-- (1,1);
\draw (1,1)-- (3,1);
\draw (3,3)-- (3,1);
\draw (7.86,2.56)-- (8.86,3.56);
\draw (9.86,2.56)-- (8.86,3.56);
\draw (7.86,2.56)-- (9.86,2.56);
\draw (9.86,2.56)-- (8.86,1.56);
\draw (8.86,1.56)-- (7.86,2.56);
\draw (8.86,1.56)-- (8.86,0.56);
\draw (7.16,3.66)-- (7.86,4.34);
\draw (7.86,4.34)-- (8.82,5.94);
\draw (9.86,4.36)-- (8.82,5.94);
\draw (7.86,4.34)-- (9.86,4.36);
\draw (9.86,4.36)-- (10.64,3.72);
\draw (8.82,5.94)-- (8.82,6.94);
\draw (5.38,3.01)-- (4.7,1.02);
\draw (5.38,3.01)-- (6,1);
\draw (4.32,5.22)-- (4.72,4);
\draw (4.72,4)-- (6,4);
\draw (6,4)-- (6.4,5.22);
\draw (6.4,5.22)-- (5.36,5.97);
\draw (5.36,5.97)-- (4.32,5.22);
\draw (5.38,3.01)-- (4.32,2.26);
\draw (4.32,2.26)-- (4.7,1.02);
\draw (4.7,1.02)-- (6,1);
\draw (6,1)-- (6.42,2.23);
\draw (5.38,3.01)-- (6.42,2.23);
\begin{scriptsize}
\fill [color=qqqqff] (1,1) circle (1.5pt);
\fill [color=qqqqff] (3,1) circle (1.5pt);
\fill [color=qqqqff] (3,3) circle (1.5pt);
\fill [color=qqqqff] (1,3) circle (1.5pt);
\fill [color=qqqqff] (1,4) circle (1.5pt);
\fill [color=qqqqff] (3,4) circle (1.5pt);
\fill [color=qqqqff] (3,6) circle (1.5pt);
\fill [color=qqqqff] (1,6) circle (1.5pt);
\fill [color=qqqqff] (4.7,1.02) circle (1.5pt);
\fill [color=qqqqff] (6,1) circle (1.5pt);
\fill [color=qqqqff] (4.72,4) circle (1.5pt);
\fill [color=qqqqff] (6,4) circle (1.5pt);
\fill [color=qqqqff] (7.86,2.56) circle (1.5pt);
\fill [color=qqqqff] (8.86,3.56) circle (1.5pt);
\fill [color=qqqqff] (9.86,2.56) circle (1.5pt);
\fill [color=qqqqff] (8.86,1.56) circle (1.5pt);
\fill [color=qqqqff] (8.86,0.56) circle (1.5pt);
\fill [color=qqqqff] (8.82,5.94) circle (1.5pt);
\fill [color=qqqqff] (7.86,4.34) circle (1.5pt);
\fill [color=qqqqff] (9.86,4.36) circle (1.5pt);
\fill [color=qqqqff] (7.16,3.66) circle (1.5pt);
\fill [color=qqqqff] (8.82,6.94) circle (1.5pt);
\fill [color=qqqqff] (10.64,3.72) circle (1.5pt);
\fill [color=qqqqff] (6.4,5.22) circle (1.5pt);
\fill [color=qqqqff] (5.36,5.97) circle (1.5pt);
\fill [color=qqqqff] (4.32,5.22) circle (1.5pt);
\fill [color=qqqqff] (6.42,2.23) circle (1.5pt);
\fill [color=qqqqff] (5.38,3.01) circle (1.5pt);
\fill [color=qqqqff] (4.32,2.26) circle (1.5pt);
\end{scriptsize}
\end{tikzpicture}
\caption{The six JM graphs}\label{6JMgraphs}
\end{figure}

The following statement is  straightforward from  the definition.

\begin{lemma}\label{ec2samelg}
For every simple graph $G$ we have that $\cF_{e,c}(G,2)$ is the edge set of the line graph of $G$.\qed
\end{lemma}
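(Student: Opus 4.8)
The plan is to unwind both definitions and observe that they describe the very same family of two-element subsets of $E$. Recall that the line graph $L(G)$ of $G=(V,E)$ has vertex set $E$, and two distinct edges $e,f\in E$ (each regarded as its $2$-element set of endpoints) are adjacent in $L(G)$ precisely when $e\cap f\neq\emptyset$. Hence the edge set of $L(G)$ is exactly $\{\{e,f\}\mid e,f\in E,\ e\neq f,\ e\cap f\neq\emptyset\}$, a collection of $2$-element subsets of $E$.

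First I would fix an arbitrary two-element subset $F=\{e,f\}\subseteq E$ with $e\neq f$ and determine when $(U(F),F)$ is connected. The subgraph $(U(F),F)$ has exactly two edges. Since $G$ is simple, two distinct edges cannot share both endpoints, so $|e\cap f|\in\{0,1\}$. If $e\cap f=\emptyset$, then $U(F)$ consists of four vertices and $(U(F),F)$ has two connected components, hence is disconnected. If $|e\cap f|=1$, then $(U(F),F)$ is a path on three vertices, hence connected. Therefore $F\in\cF_{e,c}(G,2)$ if and only if $e\cap f\neq\emptyset$.

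Comparing this with the description of the edge set of $L(G)$ above yields $\cF_{e,c}(G,2)=\{\{e,f\}\mid e,f\in E,\ e\neq f,\ e\cap f\neq\emptyset\}$, which is exactly the edge set of the line graph of $G$, as claimed. There is no genuine obstacle in this argument; the only point worth highlighting is that simplicity of $G$ is precisely what rules out the case $|e\cap f|=2$ and thereby makes ``$(U(F),F)$ connected'' coincide with ``$e$ and $f$ intersect'' for two-edge subsets $F$.
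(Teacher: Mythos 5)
Your proof is correct and is just the definitional unwinding that the paper itself treats as immediate (it states the lemma as "straightforward from the definition" with no written proof): a two-edge subgraph of a simple graph is connected exactly when the two edges share an endpoint, which is the adjacency condition in the line graph. No further comment needed.
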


\begin{lemma}\label{4to6}
If $G=(V,E)$ is a JM graph then we have $4\leq |V|\leq 6$.
\end{lemma}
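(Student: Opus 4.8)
The plan is to lean on the structure theorems already proved in this section. By Theorem~\ref{th-jmgraph1w} a JM graph $G=(V,E)$ is connected and minimal transversal-free, and by Lemma~\ref{jmgarelg} it is therefore a line graph $G=L(H)$ of a simple graph $H$; deleting the isolated vertices of $H$ does not change $L(H)$, so we may assume $H$ has none, and then $H$ is connected because $L(H)$ is. Since the vertices of $L(H)$ are the edges of $H$, we have $|V|=|E(H)|$, so the statement is equivalent to $4\le|E(H)|\le 6$. The lower bound is immediate: $G$ is transversal-free, so it has an edge, that edge is not a transversal, hence there is a second edge of $G$ disjoint from it, and these two edges span four distinct vertices, giving $|V|\ge 4$.

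For the upper bound I would first translate transversal-freeness into matching language on $H$. A transversal edge of a graph $G'$ is exactly a two-element vertex cover of $G'$ that is itself an edge; since independent sets of $L(H)$ are precisely matchings of $H$, the Gallai identity gives $\tau(L(H))=|E(H)|-\nu(H)$. Minimality of $G$ yields $\tau(G)\le 3$: no single vertex covers all edges (otherwise $G$ is a star and has a transversal edge, contradicting transversal-freeness), so for every vertex $w$ the subhypergraph $G-w$ is nonempty, hence has a transversal edge, i.e.\ a two-element vertex cover, and adjoining $w$ gives a vertex cover of $G$ of size $\le 3$. Thus $\nu(H)\ge|E(H)|-3$. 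As $H$ is connected it has at most $|E(H)|+1$ vertices, while a matching of size $\nu(H)$ covers $2\nu(H)$ of them, so $2(|E(H)|-3)\le|E(H)|+1$, i.e.\ $|E(H)|\le 7$.

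The remaining task, and the step I expect to be the main obstacle, is to exclude $|E(H)|=7$. In that case all the inequalities above are tight: $\nu(H)=4$, $H$ has exactly $8$ vertices and $7$ edges, hence $H$ is a tree, and the size-$4$ matching is a perfect matching $M$ of $H$ — which is unique, a tree having at most one perfect matching. Now choose any edge $e\in M$ and consider the nonempty proper induced subhypergraph $G-e=L(H-e)$ (nonempty because the $6$ edges of $H-e$ cannot be pairwise disjoint on only $8$ vertices). By minimal transversal-freeness, $L(H-e)$ has a transversal edge, i.e.\ there are adjacent edges $g,g'$ of $H-e$ such that $E(H)\setminus\{e,g,g'\}$ is a matching of $H$; but this set has $4$ edges, so it is a perfect matching, hence equals $M$, contradicting $e\in M$. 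Therefore $|E(H)|\le 6$, and combining with the lower bound gives $4\le|V|\le 6$. The one routine fact I would still need to spell out carefully is the bookkeeping in this last paragraph — that a transversal edge of $L(H-e)$ corresponds exactly to removing two adjacent edges from $H-e$ to leave a matching of $H$ — together with checking that $G-e$ is a legitimate (nonempty) induced subhypergraph of $G$.
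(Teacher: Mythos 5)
Your proof is correct, but for the upper bound you take a genuinely different route from the paper. The paper also reduces to the line graph representation via Theorem~\ref{th-jmgraph1w} and Lemma~\ref{jmgarelg}, but then it simply observes (Lemma~\ref{ec2samelg}) that $E=\cF_{e,c}(G^*,2)$ for some simple graph $G^*$ and invokes the general quantitative bound of Lemma~\ref{ecbound}(i) with $k=2$, giving $|E^*|\leq k^2+k=6$ at once; that lemma is proved by deleting a non-disconnecting edge and decomposing the rest of the graph around a transversal connected $k$-edge subgraph. You instead give a self-contained argument: minimal transversal-freeness yields $\tau(G)\leq 3$ (a transversal edge of $G-w$ plus $w$ is a vertex cover), the Gallai identity $\tau(L(H))=|E(H)|-\nu(H)$ converts this to $\nu(H)\geq |E(H)|-3$, connectivity gives $|E(H)|\leq 7$, and the extremal case $|E(H)|=7$ is killed by noting $H$ would be a tree with a (unique) perfect matching $M$, while minimality applied to $G-e$ for $e\in M$ would force a second perfect matching avoiding $e$. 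Your uses of the minimality definition (nonemptiness of $G-w$ and of $G-e$, and the translation of a transversal edge of $L(H-e)$ into two adjacent edges $g,g'$ with $E(H)\setminus\{e,g,g'\}$ a matching) all check out. What each approach buys: the paper's proof is two lines long because it reuses machinery it needs anyway in Section~\ref{s-size} (at the cost of a forward reference to Lemma~\ref{ecbound}); yours avoids Lemmas~\ref{ec2samelg} and~\ref{ecbound} entirely and reaches the sharp bound by elementary matching theory, at the cost of the extra case analysis for $|E(H)|=7$.
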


\proof
It follows from Theorem \ref{th-jmgraph1w} that $G$ has at least two disjoint edges.
Hence, we have $|V|\geq 4$.
By Theorem \ref{th-jmgraph1w} and Lemmas \ref{jmgarelg} and \ref{ec2samelg}, there exists a graph $G^*=(V^*,E^*)$ such that $E= \cF_{e,c}(G^*,2)$.
By Theorem \ref{th-jmgraph1w}  and Lemma \ref{ecbound} we have $|E^*|\leq 2^2+2=6$,
where Lemma \ref{ecbound} can be found in the next section.
Since $G$ is the line graph of $G^*$, we have $E^*=V$, implying $|V| \leq 6$.
\qed

\begin{theorem}
Among all graphs, only six graphs in Figure \ref{6JMgraphs} are JM.
\end{theorem}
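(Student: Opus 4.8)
We have in fact already reduced the statement to a finite problem. By Theorem~\ref{th-jmgraph1w} a graph is JM if and only if it is connected and minimal transversal-free, and by Lemma~\ref{4to6} every JM graph has between $4$ and $6$ vertices. So the plan is to list all connected minimal transversal-free graphs on $4$, $5$ and $6$ vertices and to check that this list is exactly the six graphs of Figure~\ref{6JMgraphs}. (Equivalently, using Lemmas~\ref{jmgarelg} and \ref{ec2samelg}, one may instead enumerate the connected graphs $G^\ast$ with $4$, $5$, or $6$ edges and determine for which of them $L(G^\ast)$ is minimal transversal-free; since $|E(G^\ast)|=|V(L(G^\ast))|$, both formulations lead to the same list.)

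To keep the search short I would unwind the definition of minimality: $\cH$ is minimal transversal-free precisely when $\cH$ is transversal-free and no nonempty proper induced subhypergraph of $\cH$ is transversal-free. For a graph this reads: $G$ is transversal-free and $G$ contains no transversal-free graph on fewer vertices as an induced subgraph. Since in a transversal-free graph every edge has a disjoint partner, such a graph has two disjoint edges and hence at least $4$ vertices, while any graph on at most $3$ vertices with an edge has a dominating edge. Consequently the search proceeds level by level: one first checks which $4$-vertex graphs are transversal-free; then a transversal-free $5$-vertex graph is minimal iff it contains none of the $4$-vertex transversal-free graphs as an induced subgraph; and a transversal-free $6$-vertex graph is minimal iff it additionally avoids the $5$-vertex minimal transversal-free graphs.

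Carrying this out: a direct inspection of the $4$-vertex graphs shows that the transversal-free ones are exactly $2K_2$, $C_4$ and $K_4$; of these, $C_4$ and $K_4$ are connected, and they are automatically minimal because every proper induced subgraph has at most three vertices. On $5$ vertices one must avoid induced $2K_2$, $C_4$ and $K_4$; the forbidden $2K_2$ and $C_4$ are very restrictive (every disjoint pair of edges must be completed to a non-$C_4$ by a third edge inside the four vertices it spans), and going through the connected $5$-vertex graphs that avoid these, exactly three turn out to be transversal-free: the $5$-cycle $C_5$; the graph obtained from the ``diamond'' ($K_4$ with one edge deleted) by attaching a pendant edge at a vertex of degree two; and the ``gem'' (a vertex joined to all four vertices of a path $P_4$). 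On $6$ vertices one repeats the analysis with the larger list of forbidden induced subgraphs ($2K_2$, $C_4$, $K_4$, $C_5$, the diamond-with-pendant and the gem), and the only connected transversal-free graph that survives is the ``net'', a triangle with a pendant edge at each of its three vertices. By Lemma~\ref{4to6} there is nothing on seven or more vertices. This yields exactly the six graphs $C_4$, $K_4$, $C_5$, the gem, the diamond-with-pendant, and the net of Figure~\ref{6JMgraphs}, and by Theorem~\ref{th-jmgraph1w} these are precisely the JM graphs.

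The main obstacle is making sure the case analysis on $5$ and $6$ vertices is complete — that no transversal-free graph has been overlooked. The combination of ``every edge has a disjoint partner'' with ``no induced $2K_2$'' gives a strong local constraint that keeps the enumeration manageable, but one has to be careful with pendant vertices: a pendant attached to a degree-two vertex of the diamond yields a transversal-free graph, whereas a pendant attached to a degree-three vertex does not, and similarly for the various triangle-with-pendants configurations. As a safeguard on the final answer I would also verify directly, by inspection of Figure~\ref{6JMgraphs}, that each of the six displayed graphs is connected and minimal transversal-free, so that Theorem~\ref{th-jmgraph1w} certifies all six as JM.
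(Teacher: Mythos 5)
Your proposal is correct and follows essentially the same route as the paper: reduce via Theorem~\ref{th-jmgraph1w} (JM $\Leftrightarrow$ connected and minimal transversal-free) and Lemma~\ref{4to6} ($4\leq|V|\leq 6$) to a finite check of small graphs, and verify directly that the six graphs of Figure~\ref{6JMgraphs} qualify. The only difference is cosmetic: the paper simply invokes the complete catalogue of graphs on at most six vertices for the exhaustive check, while you organize that same check level by level via forbidden induced transversal-free subgraphs ($2K_2$, $C_4$, $K_4$, then the three $5$-vertex graphs), which is a perfectly sound way to carry it out.
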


\begin{proof}
It is easy to see that all six graphs are connected and minimal transversal-free. Since graphs correspond to $\cF_{v,c}(G,1)$, by Theorem \ref{c-t-ad3} they are JM graphs.

To show that no other JM graph exists, it is sufficient to check all graphs $G=(V,E)$ with $4 \leq |V| \leq 6$ by Lemma \ref{4to6}; see e.g., \cite{Har69} for a complete list of graphs with up to $6$ vertices.
\qed
\end{proof}

Before concluding this section, we remark that Lemma \ref{4to6} provides a tighter bound than the one in (iii) of Lemma \ref{ecbound}, when $k=1$.

\section{Size of $k$-Uniform JM Hypergraphs}
\label{s-size}

In this section we study the bound for the size of $k$-uniform JM hypergraphs.
We first provide upper bound for the size of $k$-uniform minimal transversal-free hypergraphs, implying upper bound for the size of $k$-uniform JM hypergraphs.

\begin{lemma}[\cite{Bog17}]\label{Bogdanov}
Assume that $\cH\subseteq 2^V$ is a $k$-uniform minimal transversal-free hypergraph. Then, we have
\[
|V| ~\leq~ k\binom{2k}{k}.
\]
\end{lemma}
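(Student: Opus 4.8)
The plan is to reduce the inequality to a charging argument anchored at one pair of disjoint hyperedges. Since $\cH$ is transversal-free, fix $A,B\in\cH$ with $A\cap B=\emptyset$ and set $W=A\cup B$; as $\cH$ is $k$-uniform, $|W|=2k$. Writing $\binom{W}{k}$ for the family of $k$-element subsets of $W$, it suffices to construct a map $\phi\colon V\to\binom{W}{k}$ all of whose fibers have size at most $k$, because then $|V|=\sum_{C\in\binom{W}{k}}|\phi^{-1}(C)|\le k\binom{2k}{k}$. The boundary cases $|V|\le 2k+1$ (and in particular $k=1$, where one checks directly that $|V|=2$) will be disposed of separately, since these are exactly the cases in which $W\cup\{v\}$ need not be a proper subset of $V$.

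To define $\phi$ I would set $\phi(v)=A$ for $v\in A$ and $\phi(v)=B$ for $v\in B$. For $v\notin W$ (with $|V|\ge 2k+2$) the induced subhypergraph $\cH_{W\cup\{v\}}$ is nonempty — it contains $A$ and $B$ — and is a proper induced subhypergraph of $\cH$; minimality therefore yields a hyperedge $T$ of $\cH_{W\cup\{v\}}$ that meets every hyperedge of $\cH_{W\cup\{v\}}$, in particular $T\cap A\neq\emptyset$ and $T\cap B\neq\emptyset$. From $T\subseteq W\cup\{v\}$ one extracts a $k$-subset $\phi(v)\subseteq W$: it equals $T$ when $v\notin T$, and is obtained by completing the $(k-1)$-set $T\setminus\{v\}$ inside $W$ in a prescribed way when $v\in T$ (note $v\notin W$, so in that case $T\setminus\{v\}$ already meets both $A$ and $B$, forcing $k\ge 3$). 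In either case $\phi(v)$ still meets both $A$ and $B$, so the fibers $\phi^{-1}(A)$ and $\phi^{-1}(B)$ receive no vertex outside $W$ and have size exactly $k$; only the remaining $\binom{2k}{k}-2$ subsets in $\binom{W}{k}$ can be assigned vertices of $V\setminus W$.

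The heart of the matter is the \emph{fiber bound}: no $C\in\binom{W}{k}$ is $\phi(v)$ for more than $k$ vertices $v$. I would argue by contradiction: given distinct $v_0,\dots,v_k\notin W$ with $\phi(v_i)=C$ for all $i$, pass to the induced subhypergraph on $W\cup\{v_0,\dots,v_k\}$ — a proper induced subhypergraph of $\cH$ once $|V|>3k+1$, the residual small cases being checked by hand — and show it is transversal-free, contradicting minimality. The point is that the relation $\phi(v_i)=C$ encodes precisely how the hyperedges through $v_i$ lying in $W\cup\{v_i\}$ sit relative to $C$, while a candidate transversal of this subhypergraph has only $k$ vertices with which to hit the disjoint pair $A,B$ together with suitable edges through each of $v_0,\dots,v_k$; a short case analysis (split according to whether $v_i\in T_{v_i}$) should then exhibit, for every hyperedge of the subhypergraph, a disjoint partner inside it. Making this last step airtight — pinning down exactly which combinatorial data $\phi$ must record so that the case analysis closes — is the main obstacle I anticipate. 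Once the fiber bound holds, summing over $C$ gives $|V|\le k\binom{2k}{k}$.
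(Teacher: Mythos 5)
Your reduction to a ``fiber bound'' is where the argument breaks, and you acknowledge yourself that this step is not pinned down; unfortunately it is not a routine verification but the whole content of the lemma, and as you have set it up it is actually false. The map $\phi$ is defined through an \emph{arbitrary} choice of a transversal edge $T_v$ of $\cH_{W\cup\{v\}}$, and nothing prevents one and the same hyperedge $C_0\subseteq W$ (meeting both $A$ and $B$) from being a legitimate choice of $T_v$ for an unbounded number of vertices $v\notin W$. This really happens: in the near-extremal example of Section \ref{s-size} (the P\'alv\"olgyi construction), take the disjoint pair $A=B_1\cup\{w_1\}$, $B=A_1\cup\{w_0\}$, so $W=U\cup\{w_0,w_1\}$; then for essentially every outside vertex $v=w_j$ the edge $B_0\cup\{w_0\}\subseteq W$ is a transversal of $\cH_{W\cup\{w_j\}}$, so one may set $T_{w_j}=B_0\cup\{w_0\}$ and $\phi(w_j)$ is the same $k$-set for roughly $\frac12\binom{2k-2}{k-1}$ vertices. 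So either you must prescribe the choice of $T_v$ much more carefully (and prove that a good choice exists), or the fiber bound has to be abandoned. The sketched contradiction does not rescue it: passing to $\cH_{W\cup\{v_0,\dots,v_k\}}$, minimality only tells you that this subhypergraph \emph{has} a transversal edge, and that subhypergraph may contain hyperedges using several of the $v_i$, or edges not contained in any single $W\cup\{v_i\}$, about which the data $\phi(v_i)=C$ records nothing; there is no visible route from ``$k{+}1$ equal labels'' to a violation of minimality. A further (smaller) defect is that when $v\in T_v$ your $\phi(v)$ is an artificial completion of $T_v\setminus\{v\}$ and need not be a hyperedge, so even the combinatorial meaning of a repeated label is unclear.

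The structural reason the window $W=A\cup B$ cannot suffice is that vertices far from $W$ interact with it too weakly; any proof has to exploit minimality globally. The paper does this differently: for each $i\in V$ it takes a transversal edge $H_i$ of the vertex-deleted subhypergraph $\cH_{V\setminus\{i\}}$, extracts a minimal family $\cB\subseteq\cH$ such that every $H_i$ has a disjoint member of $\cB$ (so $\cB$ covers $V$), and pairs each $B\in\cB$ with a witness $A(B)\in\{H_i\}$ disjoint from $B$ but meeting every other member of $\cB$. This cross-intersecting set-pair system satisfies the hypotheses of Bollob\'as's theorem, giving $|\cB|\le\binom{2k}{k}$ and hence $|V|\le k\binom{2k}{k}$. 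If you want to salvage your charging idea, you would essentially have to rediscover some such set-pair structure; as written, the proposal has a genuine gap at its central step.
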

\proof
Since $\cH$ is a minimal transversal-free, for every $i\in V$ we have a hyperedge $H_i\in\cH$ such that $H_i\cap H'\neq\emptyset$ for all $H'\in\cH$ with $i\not\in H'$. Let us denote by $\cH'=\{H_i\mid i\in V\}$ the family of these hyperedges. By the transversal-freeness we also have for every hyperedge $H\in \cH'$ a disjoint hyperedge $B(H)\in\cH$, $H\cap B(H)=\emptyset$. Let us now choose a minimal subhypergraph $\cB\subseteq \cH$ such that
\begin{equation}
\label{e-PP}
\forall H\in\cH' ~~\exists B\in\cB: ~~H\cap B=\emptyset.
\end{equation}
Let us note first that such a $\cB$ must form a cover of $V$, i.e., $V=\bigcup_{B\in\cB}B$. This is because for all $H_i\in\cH'$ we have a $B\in \cB$ such that $H_i\cap B=\emptyset$, and consequently, $i\in B$. Let us observe next that for all $B\in \cB$ we have at least one $A(B)\in\cH'$ such that $A(B)\cap B=\emptyset$ and $A(B)\cap B'\neq\emptyset$ for all $B'\in\cB\setminus\{B\}$. This is because we choose $\cB$ to be a minimal family with respect to \eqref{e-PP}.
Let us now define $\cA=\{A(B)\in\cH'\mid B\in\cB\}$. The pair $\cA$, $\cB$ of  hypergraphs now satisfies the conditions of a classical theorem of Bollob\'as \cite{Bol65}, which then implies that
\[
|\cA|=|\cB|\leq\binom{2k}{k}.
\]
Since $\cB$ is a $k$-uniform hypergraph that covers $V$, our claim follows.
\qed

This clearly implies that $|\cH| \leq 2^{k\binom{2k}{k}}$.

An example, provided by D. P\'alv\"olgyi \cite{Pal17}, almost matches the upper bound above on the size of $V$, and we recall it here for completeness.

Let $V=U\cup W$, where $|U|=2k-2$, $|W|=\frac12 {{2k-2}\choose{k-1}}$, and $U\cap W=\emptyset$.

Consider all $(k-1)$ subsets of $U$, labeled as
$A_i$ and $B_i$ such that $A_i\cap B_i=\emptyset$ for $i=0,\dots ,r-1$,
where $r=\frac12 {{2k-2}\choose{k-1}}$. Assume further that $W=\{w_0,w_1,\dots ,w_{r-1}\}$, and
define
\[
\cH=\{B_i\cup\{w_i\},A_{i+1}\cup \{w_i\}\mid i=0,\dots ,r-1\},
\]
where indices are taken modulo $r$.
The hypergraph $\cH$ is $k$-uniform.

Easy to see that it is a minimal transversal-free hypergraph. Namely, if we delete some points from $U$
then all remaining hyperedges are intersecting already in $U$.
If we delete some points from $W$ but not $U$ then consider an index $i$
such that we deleted $w_i$ and not $w_{i+1}$.
Then $B_{i+1}\cup\{w_{i+1}\}$ intersects all remaining hyperedges.

\medskip

We next consider the size of JM hypergraphs discussed in Section \ref{s3}.
As mentioned in the introduction,  self-dual matroid hypergraphs $\cH$ are $k$-uniform for $k=n/2$, and satisfy
\[
2^k \leq |\cH| \leq \binom{2k}{k}.
\]
Since we characterize JM graphs in Section \ref{sec-jmgraph1ac},
in the rest of this section, we provide an upper bound for the size of JM hypergraphs definied by  \eqref{ec}, \eqref{vc}, \eqref{et},  and \eqref{vt}.
For this, we  prove that the size of a graph, for which definitions \eqref{ec}, \eqref{vc}, \eqref{et},  and \eqref{vt}   yield minimal transversal-free hypergraphs, is bounded by a function of $k$.

\begin{lemma}\label{ecbound}
Let $G=(U,E)$ be a connected graph.
\begin{itemize}
\item[(i)] If $\cF_{e,c}(G,k)$ is minimal transversal-free, then $|E|\leq k^2+k$.
\item[(ii)] If $G$ is simple and $\cF_{e,t}(G,k)$ is minimal transversal-free, then $|E|\leq \frac{k^3}{2}+\frac{k}{2}+1$.
\item[(iii)] If $\cF_{v,c}(G,k)$ or $\cF_{v,t}(G,k)$ is minimal transversal-free, then
$|U| \leq 2k^3+4k^2+k+2$.
\end{itemize}
\end{lemma}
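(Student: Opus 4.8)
The plan is to handle all three parts within one framework. Write $\cH$ for the hypergraph in question; its ground set is $E$ for $\cF_{e,c}(G,k)$ and $\cF_{e,t}(G,k)$, and $U$ for $\cF_{v,c}(G,k)$ and $\cF_{v,t}(G,k)$, where $G=(U,E)$ is connected. The starting observation is that a transversal-free hypergraph always contains two disjoint hyperedges: every hyperedge fails to be a transversal, so some hyperedge misses it. Hence no ground element lies in every hyperedge (otherwise the two disjoint hyperedges would share it), and therefore, if $\cH$ is \emph{minimal} transversal-free, then for every ground element $x$ the induced subhypergraph $\cH_{(\text{ground})\setminus x}$ is nonempty and, by minimality, has a transversal hyperedge $T_x$ with $x\notin T_x$. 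Translated into graph language: after deleting from $G$ the element $x$ together with the edges (resp.\ vertices) of $T_x$, the remaining graph contains no connected $k$-edge subgraph (resp.\ no $k$-edge subtree), so each of its connected components is small --- at most $k-1$ edges for the ``connected'' variants, and at most $k$ vertices, hence at most $\binom k2$ edges, for the ``tree'' variant in (ii).

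For (i) I would fix an edge $e$ and the associated $F_e\in\cF_{e,c}(G,k)$, so that every connected component of the graph with edge set $E\setminus(F_e\cup\{e\})$ has at most $k-1$ edges. Since $G$ is connected and $|F_e\cup\{e\}|=k+1$, deleting those edges leaves at most $k+2$ components; the key step is to show at most $k+1$ of them carry an edge. If $F_e\cup\{e\}$ contains a cycle this is automatic: when its edges are deleted one by one, the deletion of the first edge of a still-intact cycle does not raise the component count, so strictly fewer than $k+2$ components arise. In the remaining case $F_e\cup\{e\}$ is a forest, and here I would choose $e$ incident to a vertex of degree one in $G$ whenever one exists, so that this vertex becomes isolated in $G-F_e-e$; the leftover case, in which $G$ is $2$-edge-connected, should be dispatched by a separate direct estimate. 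Granting that at most $k+1$ components carry an edge, $|E|\le|F_e|+1+(k+1)(k-1)=k^2+k$, which is tight by the tree example of Section~\ref{s4}. Part (ii) is identical, except that ``no $k$-edge subtree inside $E\setminus(F_e\cup\{e\})$'' bounds each component by $k$ vertices and hence by $\binom k2$ edges, giving $|E|\le k+1+(k+1)\binom k2=\tfrac{k^{3}}{2}+\tfrac{k}{2}+1$.

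For (iii), let $\cH$ be $\cF_{v,c}(G,k)$ or $\cF_{v,t}(G,k)$. For each vertex $u$ take the transversal $T_u=U(F_u)$: it has at most $k+1$ vertices, $u\notin T_u$, and every connected component of $G-(\{u\}\cup T_u)$ has at most $k-1$ edges, hence at most $k$ vertices. Thus $|U|\le 1+(k+1)+k\cdot m$, where $m$ is the number of such components, and everything reduces to bounding $m$ by a quadratic function of $k$. This is the genuinely hard part, since a single vertex deletion can shatter a connected graph into arbitrarily many pieces, so the crude count used in (i) fails here. I would split the components of $G-(\{u\}\cup T_u)$ into those adjacent to $u$ and those adjacent only to $T_u$. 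For the first family, in $G-T_u$ every connected $k$-edge subgraph must contain $u$ (by the transversal property of $T_u$), so, with $D$ the component of $G-T_u$ containing $u$, every component of $D-u$ has fewer than $k$ edges and a further use of transversal-freeness limits their number; for the second family, each component attaches only to the at most $k+1$ vertices of $T_u$, and minimality again forbids too many. Combining the two estimates yields $|U|\le 2k^{3}+4k^{2}+k+2$.

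The main obstacle, common to all three parts, is the counting of the leftover pieces in the reduction ``delete a ground element, confine everything by the forced transversal, count what remains''. For (i) and (ii) this count must be driven down to the tight value $k+1$, which is what forces the cycle-versus-leaf case analysis above. For (iii) the task is harder still, because one must control the number of components produced by deleting a small \emph{vertex} set rather than a small edge set, and this is precisely where a second, more delicate appeal to minimality is unavoidable.
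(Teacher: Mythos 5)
Your overall reduction is the same as the paper's: delete a ground element, invoke minimal transversal-freeness to get a transversal hyperedge avoiding it, and observe that after removing both, no hyperedge survives, so all remaining components are small. But in the two places where the actual counting has to be done, your argument has genuine gaps. For (i)--(ii), you bound the number of components by ``deleting $k+1$ edges from a connected graph leaves at most $k+2$ components'' and then try to shave off one component by a case analysis; the final case (where $F_e\cup\{e\}$ is a forest and $G$ has no degree-one vertex) is left as ``a separate direct estimate'' that you never supply, and moreover that case is \emph{not} the same as $G$ being $2$-edge-connected (minimum degree $\ge 2$ does not exclude bridges), so even the case split is misstated. Note that this case really occurs, e.g.\ already for cycles $C_{2k}$. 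The paper avoids the whole case analysis by a different counting idea: choose $e$ so that $G-e$ stays connected (an edge on a cycle or a leaf edge); then every edge-carrying component of $G-e-T$ must contain a vertex of $T$, because each deleted edge of $T$ has both endpoints in $V(T)$, so a component disjoint from $V(T)$ would already be disconnected from $T$ inside the connected graph $G-e$. Since $T$ is connected with $k$ edges, $|V(T)|\le k+1$, and the bounds $(k+1)(k-1)+k+1$ and $(k+1)\binom{k}{2}+k+1$ follow with no cases.

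For (iii) the situation is more serious: the quantitative heart of the statement is precisely the bound on the number $m$ of components of $G-(\{v\}\cup F_v)$, and your proposal only asserts that ``a further use of transversal-freeness limits their number'' and that ``minimality again forbids too many,'' without producing any bound; splitting the components into those adjacent to $v$ and those adjacent only to $F_v$ does not by itself yield anything. The paper's mechanism is concrete: for each component $C_i$ pick $w\in C_i$ and the transversal $F_w$ of the subhypergraph avoiding $w$; since a hyperedge containing $w$ and disjoint from $F_w$ exists, $N_G(C_i)\not\subseteq F_w$, so one can pick an attachment vertex $u\in N_G(C_i)\setminus F_w$; then $u$ can have only $O(k)$ neighbours among the components, because otherwise a $k$-edge star centred at $u$ avoiding $\{w\}\cup F_w$ would be a hyperedge disjoint from $F_w$ and not containing $w$, contradicting the transversal property of $F_w$. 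Charging each component to such a vertex $u\in\{v\}\cup F_v$ bounds $m$ by roughly $2k\,|\{v\}\cup F_v|$, which is what produces the cubic bound. Without this (or an equivalent) degree argument, your part (iii) is a plan, not a proof.
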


\proof
We prove first (i).
Let us choose an edge $e$, such that the deletion of $e$ does not disconnect the graph $G$. Such an edge always exists, since we can pick an edge on a cycle or a leaf edge.
Then, by the minimal transversal-freeness, after the deletion of $e$ we must have a connected subgraph $T$ of $k$ edges such that no disjoint connected
subgraph of $k$ edges exists. This means that if we delete in addition the edges of $T$, then the
graph decomposes into connected subgraphs, each having at most $k-1$
edges. Since these connected subgraphs intersect the vertex set of $T$
in disjoint sets, and since $T$ has at most $k+1$ vertices, we cannot have
more than $(k-1)(k+1) +k +1 =k^2+k$ edges in $G$.

For (ii) let us repeat the same argument and note that in each connected component now we cannot have a tree of $k$ edges. This means that each connected component has at most $k$ vertices,
that is, at most $\binom{k}{2}$ edges, since $G$ is assumed to be simple. Thus, in total we get that
\[
|E| \leq 1+k+(k+1)\binom{k}{2}=\frac{k^3}{2}+\frac{k}{2}+1.
\]

For (iii) we provide a proof for $\cF_{v,c}(G,k)$. The same proof works for $\cF_{v,t}(G,k)$, as well.

Let $v$ be a vertex in $G$ such that $G-v$ is connected.
If $G-v$ contain no connected $k$-edge subgraph, then
we have $|U| \leq k+1$.
Otherwise, since $\cF_{v,c}(G,k)$ is minimal transversal-free, there exists a hyperedge
 $F_v \in  \cF_{v,c}(G,k)$ that intersects all $F \in \cF_{v,c}(G,k)$ with $v \not\in F$.
Let $C_i$ $(i=1, \dots , p$) be connected components of $G-(\{v\} \cup F_v)$.
Then we have $|V(C_i)| \leq k$, since $C_i$ contains at most $k-1$ edges by the definition of $F_v$ and the hypergraph $\cF_{v,c}(G,k)$. Furthermore, we have $N_G(C_i) \subseteq \{v\} \cup H_v$ for all $i$, where $N_G(C_i)$ denotes the set of neighbors of $C_i$ in $G$.

For any component $C_i$,
let $w$ be a vertex in $C_i$.
We first claim that  a hyperedge $F_w$ satisfies
\begin{equation}
\label{eq--123-0}
N_G(C_i) \not \subseteq F_w,
\end{equation}
where we recall that $F_w$ is a hyperedge in
$\cF_{v,c}(G,k)$ that intersects all $F \in \cF_{v,c}(G,k)$ with $w \not\in F$.
Since $\cF_{v,c}(G,k)$ is minimal transversal-free,
$\cF_{v,c}(G,k)$ contains a hyperedge that is disjoint from $F_w$ and contains $w$. This implies the claim.

Let $u$ be a vertex in $N_G(C_i) \setminus F_w$,
Then it holds that
\begin{equation}
\label{eq--123-1}
|N_G(u) \cap (\bigcup_j C_j)| \leq 2k,
\end{equation}
since otherwise,
$|N_G(u)  \setminus (\{w\} \cup F_w) |\geq k$, implying that
$\cF_{v,c}(G,k)$ contains a hyperedge $F$ disjoint from $\{w \} \cup F_w$,
a contradiction.

By \raf{eq--123-0}  and \raf{eq--123-1},
the number of connected components $C_i$ is bounded by $2k|\{v \} \cup F_v | =4k^2+2k$.
Thus, the number of vertices of $G$ is bounded by
\[
(4k^2+2k)k+k+2=4k^3+2k^2+k+2.
\]
\qed

The above bounds imply that for any given $k$ we have only finitely many different such JM hypergraphs, with all four definitions.
The examples derived from trees and stars of cliques show that bounds (i) and (ii) are sharp.

\section{Further Examples and Concluding Remarks}
\label{sec-cr}
Let us first show a small example for which property (A1) holds, but both properties (D1) and (D2) fail.
This example is not JM, showing that not all minimal transversal-free hypergraphs are JM.
Unfortunately, we cannot prove the necessity of properties (D1) or (D2),
though property (D2) may be necessary for a hypergraph to be JM.

Our example is $\cH_{cube}$ formed by the facets of the $3$-dimensional unit cube. The vertex set is $V=\{0,1\}^3$, and the six hyperedges of $\cH_{cube}\subseteq 2^V$ are the subsets $H_{i,\alpha} =\{\sigma\in V\mid \sigma_i=\alpha\}$ for $i=1,2,3$ and $\alpha=0,1$.
This is a $4$-uniform hypergraph on $8$ vertices, and it is clearly minimal transversal-free.
On the other hand it does not satisfy any of three properties (D1), (D2), (D3).
To see that it is not a JM hypergraph,
assume that $m=\binom{3p+1}{2}$ and $q=m+p$ for some positive integer $p$, and consider the position $x\in\ZZP^V$ defined as
$x_{000}=m$, $x_{100}=x_{010}=x_{001}=q$, $x_{110}=x_{101}=x_{001}=2q$ and $x_{111}=3q$.
It is easy to see that for this position we have $m(x)=m$, $y(x)=3p+1$ and $h(x)=3q$, consequently this is a long 
position.
Furthermore, every height move $x \to x'$ is an $H_{i,1}$-move for some $i=1,2,3$ and we must have $m(x')=m(x)$ and $y(x')<y(x)$.
Consequently, $x'$ is always a short 
position. Hence, the necessary property (B1) with $z=h(x)-1$ is violated, and thus,
$\cH_{cube}$ cannot be JM. We show a smallest such position with $p=1$ in Figure \ref{f-cube}.

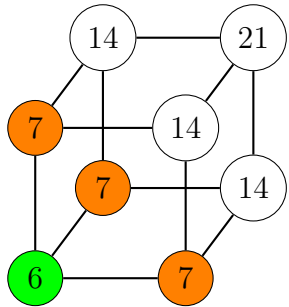
\begin{figure}[htbp]
\centering
\begin{tikzpicture}[z={(0.45cm,0.6cm)},x={(1cm,0cm)},y={(0cm,1cm)},scale=2]
\node[draw,circle,fill=green] (v000) at (0,0,0) {$6$};
\node[draw,circle,fill=orange] (v100) at (1,0,0) {$7$};
\node[draw,circle,fill=orange] (v010) at (0,1,0) {$7$};
\node[draw,circle,fill=orange] (v001) at (0,0,1) {$7$};
\node[draw,circle,fill=white] (v110) at (1,1,0) {$14$};
\node[draw,circle,fill=white] (v101) at (1,0,1) {$14$};
\node[draw,circle,fill=white] (v011) at (0,1,1) {$14$};
\node[draw,circle,fill=white] (v111) at (1,1,1) {$21$};

\path[thick]
    (v000) edge (v100)
    (v000) edge (v010)
    (v000) edge (v001)
    (v100) edge (v110)
    (v100) edge (v101)
    (v010) edge (v011)
    (v010) edge (v110)
    (v001) edge (v101)
    (v001) edge (v011)
    (v111) edge (v110)
    (v111) edge (v101)
    (v111) edge (v011);
\end{tikzpicture}
\caption{A long
position of $\cH_{cube}$ that shows that it is not JM. \label{f-cube}}
\end{figure}

Let us remark that by the definitions for any JM hypergraph $\cH\subseteq 2^V$ the height and the SG functions differ.
Furthermore, JM hypergraphs are minimal for this property
in the sense that for any proper induced subhypergaph $\cH_S$, $S\subset V$ the SG function of $NIM_{\cH_S}$
is the height function of $h_{\cH_S}$. Let us remark that the cube considered above is also minimal for this property.
We refer the reader to the companion paper \cite{BGHMM16}
for more information on height functions.

\end{document}